\newtheorem{thm}{Theorem}[section]
\newtheorem{prop}[thm]{Proposition}
\newtheorem{lem}[thm]{Lemma}
\newtheorem{cor}[thm]{Corollary}
\theoremstyle{definition}
\newtheorem{definition}[thm]{Definition}
\newtheorem{example}[thm]{Example}
\newtheorem*{acknowledgement}{Acknowledgements}
\theoremstyle{remark}
\newtheorem{remark}[thm]{Remark}
\numberwithin{equation}{section}
\newcommand{\tl}[1]{\tilde{#1}}
\newcommand{\ol}[1]{\overline{#1}}
\newcommand{\cL}{\mathcal{L}}
\newcommand{\cC}{\mathcal{C}}
\newcommand{\comment}[1]{}
\begin{document}

%%
%% The title of the paper goes here.  Edit to your title.
%%

\title{Steady Ricci Solitons on Complex Line Bundles}

%%
%% Now edit the following to give your name and address:
%% 

\author{Maxwell Stolarski}
%\address{Department of Mathematics, University of South Carolina, Columbia, SC 29208}
%\email{howard@math.sc.edu}
%\urladdr{www.math.sc.edu/$\sim$howard} % Delete if not wanted.

%%
%% If there is another author uncomment and edit the following.
%%

%\author{Second Author}
%\address{Department of Mathematics, University of South Carolina,
%Columbia, SC 29208}
%\email{second@math.sc.edu}
%\urladdr{www.math.sc.edu/$\sim$second}

%%
%% If there are three of more authors they are added in the obvious
%% way. 
%%

%%%
%%% The following is for the abstract.  The abstract is optional and
%%% if not used just delete, or comment out, the following.
%%%

\begin{abstract}
We show the existence and uniqueness of a one-parameter family of smooth complete $U(1)$-invariant gradient steady Ricci solitons on the total space of any complex line bundle over a Fano K\"ahler-Einstein base with first Chern class proportional to that of the base.
These solitons are non-K\"ahler except on the total space of the canonical bundle.
\end{abstract}

%%
%%  LaTeX will not make the title for the paper unless told to do so.
%%  This is done by uncommenting the following.
%%

 \maketitle

%\tableofcontents

%%%%%%%%%%%%%%%%%%%%%%%%%%%%%%%%%%%%%%%%%%%%%%%%%%%%%%%%%%%%%%%%%%%%%%
\section{Introduction}
%%%%%%%%%%%%%%%%%%%%%%%%%%%%%%%%%%%%%%%%%%%%%%%%%%%%%%%%%%%%%%%%%%%%%%
A \textit{Ricci soliton} $(M, g, V, \epsilon)$ is a manifold $M$ together with a Riemannian metric $g$, a vector field $V$ on $M$, and a real constant $\epsilon$ such that
\begin{equation} \label{generalsolitoneqn}
Rc(g) = \mathscr{L}_V g + \epsilon g,
\end{equation}
where $\mathscr{L}_V g$ denotes the Lie derivative of $g$ with respect to $V$.
A Ricci soliton is called \textit{expanding, steady,} or \textit{shrinking} if $\epsilon$ is negative, zero, or positive respectively.
A Ricci soliton $(M, g, V, \epsilon)$ is called \textit{gradient} if the vector field $V$ is the gradient of a function on $M$.
Notice that if $V$ is a Killing vector field then the metric is Einstein, so Ricci solitons can be regarded as generalizations of Einstein metrics.
Ricci solitons are fixed points of the Ricci flow in the space of metrics modulo scaling and diffeomorphism, and they often arise as singularity models of the Ricci flow.

%the following paragraph needs work and fact checking
%stole a lot of this straight from DW "on ricci solitons of cohomogeneity one" p. 2
Riemannian manifolds of cohomogeneity one, that is, those where a group acts isometrically with a generic orbit of codimension one, give a natural class of examples on which to investigate the existence of Ricci solitons.
In this case, the soliton equation (\ref{generalsolitoneqn}) reduces to a system of ordinary differential equations in a variable transverse to the orbits.
Many of the known examples are of this form.
For example, Cao ~\cite{Cao96} demonstrated the existence of $U(n)$-invariant gradient steady Ricci soliton metrics on the total space of the canonical line bundle over complex projective space $\mathbb{C}P^{n-1}$. %also mention Koiso
These solitons were later generalized by Feldman, Ilmanen, and Knopf in ~\cite{FIK03} who found smooth shrinking and expanding Ricci solitons on the total space of certain line bundles over complex projective space.
These examples are also K\"ahler and were unified and generalized in ~\cite{DancerWang08}.
Specifically, in ~\cite{DancerWang08}, Dancer and Wang developed the general framework for cohomogeneity one Ricci solitons, observed that the same equations arise in more general settings, and produced examples of steady, expanding, and shrinking K\"ahler-Ricci solitons on the total space of certain complex vector bundles over a product of Fano K\"ahler-Einstein manifolds (which are not necessarily homogeneous) that satisfy a particular relationship between first Chern classes.
Outside of the K\"ahler setting, Ivey ~\cite{Ivey94} produced a one-parameter family of doubly-warped gradient steady solitons on the total space of any trivial real vector bundle over an Einstein base with positive scalar curvature.
Buzano, Dancer, and Wang later found additional examples of such non-K\"ahler steady Ricci soliton metrics on this space when the base is a product of Einstein manifolds with positive scalar curvature in ~\cite{BDW15} and ~\cite{DancerWang09}.

As mentioned above, the cohomogeneity one Ricci soliton equations arise in more general settings when the metrics on the hypersurfaces depend on a single transverse variable.
This paper considers the case where the hypersurfaces are principal $U(1)$-bundles equipped with metrics such that the bundle projections are Riemannian submersions to a Fano K\"ahler-Einstein base.
%In the case considered in this paper, the hypersurfaces are the total space of a fixed principal $U(1)$-bundle over a Fano K\"ahler-Einstein base equipped with metrics such that the bundle projection is a Riemannian submersion with totally geodesic fibers.
Unlike the cohomogeneity one case, the hypersurfaces here need not be homogeneous.
While Dancer and Wang ~\cite{DancerWang08} obtained K\"ahler examples and Ivey ~\cite{Ivey94} obtained examples on trivial bundles, the general case is still not well understood as remarked in ~\cite{BDW15}.
Bergery ~\cite{BB82} showed the existence of Einstein metrics on such spaces.
Our result gives existence and uniqueness of non-Einstein steady Ricci soliton metrics on manifolds of the type considered by Bergery, namely
%MAIN THEOREM
\begin{thm} \label{mainthm}
If $E$ is the total space of a complex line bundle $E \rightarrow B$ over a Fano K\"ahler-Einstein base $B$ such that the first Chern class $c_1(E)$ of $E$ is a rational %nonzero 
multiple of $c_1(B)$ in $H^2(B, \mathbb{R})$, then there is a one-parameter family of non-homothetic smooth complete $U(1)$-invariant gradient steady Ricci solitons on $E$.
%can we remove 'nonzero' and just comment that Ivey/Dancer-Wang handled the 'zero' case? think so but want to double check
\comment{
Moreover, every smooth complete $U(1)$-invariant gradient steady Ricci soliton $(E, G, \nabla h)$, where $G$ is of the form
$$G = ds^2 + f(s)^2 g_{2 \pi} + g(s)^2 \check{g} \qquad \text{ on } E \setminus B_0 \approx (0, \infty) \times P$$
and $h = h(s)$ is a function of $s \in (0, \infty)$, arises in this one-parameter family and is uniquely classified by its maximum scalar curvature and volume of the zero section $B_0 \subset E$.}
%need to change this uniqueness to also include that the metric has to respect the metric on the base
\end{thm}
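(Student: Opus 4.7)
The plan is to write the metric in the warped-product form $G = ds^2 + f(s)^2 g_{2\pi} + g(s)^2 \check{g}$ on $E \setminus B_0 \approx (0,\infty) \times P$, where $P \to B$ is the principal $U(1)$-bundle underlying $E$ and $\check{g}$ is the K\"ahler-Einstein metric on $B$. Since $c_1(E)$ is a rational multiple of $c_1(B)$, one can choose a connection on $P$ whose curvature is a constant multiple of the K\"ahler form of $\check g$; with respect to the induced splitting of $TP$, the Ricci tensor of any such $G$ becomes block-diagonal. Plugging into the steady gradient soliton equation $Rc(G) = \nabla^2 h$ with $h=h(s)$ then yields a coupled ODE system for $f$, $g$, $h'$, together with a first integral coming from the contracted second Bianchi identity (equivalently, the steady soliton identity $R + |\nabla h|^2 = \mathrm{const}$).

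Next I would identify the smoothness conditions needed to close the metric up smoothly across the zero section $B_0$. The circle fibers must collapse as $s\to 0^+$, so $f(0)=0$ with $f'(0)$ pinned down by the Chern-class ratio so that no cone singularity appears, while $g(0)=q>0$ is a free parameter and $h'(0)=0$. The resulting singular initial value problem should admit, for each $q$ in an open interval, a unique smooth solution on some maximal existence interval $[0, S(q))$; this I would prove via a standard regular-singular / contraction-mapping analysis near $s=0$, with smooth dependence on $q$ following from the implicit function theorem.

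The main work is then to upgrade these local solutions to global, geodesically complete steady solitons. My plan is to exploit the first integral $R + (h')^2 \equiv C$ together with monotonicity properties of $h'$ to produce a priori bounds ruling out finite-$s$ blow-up of $f$, $g$, $1/g$, or (away from $s=0$) $1/f$. An asymptotic analysis as $s\to\infty$, which I expect to be of paraboloidal type with $f,g$ growing like $\sqrt{s}$ up to constants, then yields completeness. This long-time/asymptotic step is where I expect the main difficulty: one must prevent the base scale $g$ from either collapsing or running off for every $q$ in the family, which typically requires a careful interplay of the first integral, barrier arguments, and monotonicity of auxiliary quantities such as $fg'/(gf')$ or $f'/g'$. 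Finally, non-homotheticity follows because a homothety rescales the first integral $C$ by a fixed factor, while varying $q$ changes the K\"ahler-Einstein scale of the zero section along an orbit transverse to the scaling action; hence distinct values of $q$ yield genuinely distinct steady solitons.
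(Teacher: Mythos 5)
Your overall reduction (warped-product ansatz, ODE system, first integral $S+|\nabla h|^2\equiv\mathcal{C}$, smoothness conditions $f(0)=0$, $f_s(0)=1$, $g(0)>0$, $g_s(0)=h_s(0)=0$) matches the paper's setup, but there is a genuine gap in how you produce and parametrize the family. You claim the singular IVP at $s=0$ has a \emph{unique} smooth solution for each value of $q=g(0)$ and that varying $q$ is ``transverse to the scaling action.'' Both claims are wrong, and they are wrong in a way that destroys the non-homotheticity conclusion: a homothety $G\mapsto c^2G$ preserves all of your normalizations ($f_s(0)=1$, $g_s(0)=h_s(0)=0$) and simply rescales $g(0)\mapsto cg(0)$, so if the IVP with your data were uniquely solvable, your ``one-parameter family'' would be a single homothety orbit. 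In fact the singular point $s=0$ is degenerate and the IVP with that data admits an extra free parameter (in the paper's variables, the limit $\gamma=\lim_{t\to-\infty}(1-Z)/Y^2$, equivalently the scale-invariant quantity $\mathcal{C}\lambda^2=2\gamma-A_2$); it is this parameter, with $g(0)$ held fixed, that generates the non-homothetic family. Establishing the existence and uniqueness of solutions realizing each value of this parameter is the real analytic content of the construction — the paper does it by converting the boundary condition at $s=0$ into an asymptotic condition at a \emph{non-hyperbolic} equilibrium $(0,0,1,0)$ of an autonomous system and running a Schauder/Kellogg fixed-point argument for an integral operator, precisely because a naive contraction-mapping or stable-manifold argument does not directly apply there. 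Your proposal is missing this idea entirely.

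The completeness step is also asserted rather than proved, and it is the hardest part of the theorem. ``A careful interplay of the first integral, barrier arguments, and monotonicity of auxiliary quantities'' does not identify a mechanism that actually prevents finite-time blow-up; the paper's original argument for completeness of \emph{all} members of the family contained a gap (acknowledged in the introduction), and the corrected proof only establishes completeness for the sub-family with $\mathcal{C}\lambda^2\ge\Lambda_0$ sufficiently large, via the a priori bound $W^2/Y^2\le A_2/(A_3(d+2))$ obtained from a second-order comparison argument for $W/Y$ against a $\tanh$ barrier. Since a nonempty open sub-family suffices for the theorem, your plan should be revised to prove completeness only on such a sub-family (or to supply a genuinely new global argument), and in any case it needs an explicit quantitative estimate rather than the qualitative outline given.
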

%note that Ivey already handled the case of the trivial complex line bundle
%need to add Dancer-Wang's reasoning for why the gradient assumption can be dropped
%can we expand this theorem to include the case of cohomogeneity one manifolds where we don't have a relation between chern classes?

\comment{ %MIGHT WANT TO ADD THIS REMARK IN HERE OR LATER IN THE PAPER
%\begin{remark}
Insert remark about why the gradient condition can be dropped in the uniqueness statement
$V$ is of the form
$$V = \xi (s) \partial_s + \lambda(s) \hat{U} \qquad \text{ on } E \setminus B_0 \approx (0, \infty) \times P$$
%\end{remark}
}

\noindent Note that the Chern class relationship is a purely topological condition.
If additionally $c_1(B)$ spans $H^2(B, \mathbb{R})$, as is the case for $B = \mathbb{C}P^n$, then the total space of \textit{any} complex line bundle over $B$ admits a one-parameter family of complete steady Ricci soliton metrics.
The corresponding uniqueness statement will be stated in section \ref{uniquenessSection}.

The paper is organized as follows:
First, we describe a collection of $U(1)$-invariant metrics on $E$ such that a gradient steady Ricci soliton metric in this collection corresponds to a solution of a boundary value problem for a nonlinear system of ordinary differential equations.
Next, a topological fixed point argument is applied to deduce the existence and uniqueness of solutions to this boundary value problem and hence of the corresponding Ricci soliton metrics.
Finally, we show that a nonempty subset of these Ricci soliton metrics are complete and have nonnegative Ricci curvature.
In the final section, we also show that if $c_1(E) \ne -c_1(B)$ then every soliton in this one-parameter family is necessarily non-K\"ahler with respect to the natural complex structure.
If $c_1(E) = -c_1(B)$ then every soliton in this one-parameter family is K\"ahler, and we obtain an alternate construction of gradient steady K\"ahler-Ricci solitons constructed by Dancer and Wang in ~\cite{DancerWang08}.

The original version of this paper appeared in November of 2015.
In July of 2017, Matthias Wink pointed out a gap in the original proof of completeness.
We are grateful for that observation and correct this gap in this version.
Since then, related results by Wink ~\cite{Wink17} and Appleton ~\cite{Appleton17} have appeared.

\begin{acknowledgement}
The author would like to thank Dan Knopf for his mentorship and initially suggesting the problem.
The author was partially supported by NSF RTG grant DMS-1148490.
\end{acknowledgement}

%%%%%%%%%%%%%%%%%%%%%%%%%%%%%%%%%%%%%%%%%%%%%%%%%%%%%%%%%%%%%%%%%%%%%%
\section{Setup}
%%%%%%%%%%%%%%%%%%%%%%%%%%%%%%%%%%%%%%%%%%%%%%%%%%%%%%%%%%%%%%%%%%%%%%
Let $(B^{d}, \check{g}, \check{J}, \check{\omega})$ be a smooth K\"ahler-Einstein manifold of real dimension $d$ with positive Einstein constant normalized such that $\check{Rc} = (d+2) \check{g}$, and
let $E \rightarrow B$ be a complex line bundle over $B$ with first Chern class $c_1(E)$ a multiple of $c_1(B)$ in $H^2(B, \mathbb{R})$, say $c_1(E) = q c_1(B)\in H^2(B, \mathbb{R})$ for some $q \in \mathbb{R}$.
Because complex line bundles are topologically classified by the first Chern class and principal $U(1)$-bundles by the Euler class in $H^2(B, \mathbb{Z})$, assume without loss of generality that
$$E = \big( [0, \infty) \times P \big) / \sim$$
where $p: P \rightarrow B$ is the principal $U(1)$-bundle over $B$ with Euler class $e(P)$ equal to $c_1(E)$ in $H^2(B, \mathbb{Z})$ and $`` \sim "$ denotes the the equivalence relation that collapses the $U(1)$ fibers to points in $\{ 0 \} \times P$.
If $\pi_2 : [0, \infty) \times P \rightarrow P$ denotes projection onto $P$, then $p \circ \pi_2 :  [0, \infty) \times P \rightarrow B$ induces the surjection $E \rightarrow B$.
Equivalently, $E$ is obtained from $P$ as the associated line bundle with fiber $\mathbb{C}$ with respect to the usual $U(1)$ action on $\mathbb{C}$.

%Let $\theta$ be the principal $U(1)$-connection on $P$ with curvature $q(d+2)\check{\omega} \in c_1(E)$. %the existence of which is guaranteed by ~\cite{Kobayashi, Principal fiber bundles with one-dimensional toroidal group}
For $a, b \in \mathbb{R}$, let $\hat{g}(a,b) = a^2 g_{U(1)} + b^2 p^*\check{g}$ denote the unique metric on $P$ such that $$p: \big( P, \hat{g}(a,b) \big) \rightarrow (B, b^2\check{g})$$ is a Riemannian submersion with homogeneous totally geodesic fibers of length $2 \pi a$ and whose horizontal distribution $(\ker p_*)^{\perp}$ equals that of the principal $U(1)$-connection on $P$ with curvature $q(d+2)\check{\omega} \in 2 \pi c_1(E)$.
Consider $U(1)$-invariant metrics on $E$ of the form
$$G(f,g) \doteqdot ds^2 + \hat{g}(f(s), g(s))  \qquad \text{ on } E \setminus B_0 = (0, \infty) \times P$$ where $s$ parametrizes $(0, \infty)$, $f,g : (0, \infty) \rightarrow \mathbb{R}_{>0}$, and $B_0 \subset E$ denotes the image of the zero section $B \hookrightarrow E$.
Observe that we make no assumptions on the isometry group of the base $(B, \check{g})$.
If $f,g$ satisfy suitable limiting conditions at $s=0$, then $G(f,g)$ extends to a smooth complete metric on $E$.
We investigate the existence and uniqueness of gradient steady soliton metrics of the form $G(f,g)$ on $E$, that is, metrics $G(f,g)$ satisfying $Rc = \nabla^2 h$ for some function $h(s)$.
In this case, the gradient steady Ricci soliton equation is equivalent to the following ODE system (cf. ~\cite{DancerWang08} equations (4.2-4.4)): %double check "equivalent"

\begin{equation} \label{solitonEqns}
\left\{ \begin{array}{ccl}
h_{ss} & = & -\frac{f_{ss}}{f} - d \frac{g_{ss}}{g} \\
\frac{f_{ss}}{f} & = & -d\frac{f_s g_s}{fg} - \frac{f_s h_s}{f} + A_3 \frac{f^2}{g^4}\\
\frac{g_{ss}}{g} & = & -\frac{f_s g_s}{fg} - \frac{g_s h_s}{g} - (d-1) \left( \frac{g_s}{g} \right)^2 + \frac{A_2}{d}\frac{1}{g^2} - 2\frac{A_3}{d}\frac{f^2}{g^4}\\ %just added that (g_s/g)^2 term so might have to check that the change of variables still works
\end{array} \right.
\end{equation}
Here, $A_2, A_3$ are constants depending on the Einstein constant and the norm of the O'Neil tensor $||\mathcal{A}||$ for the Riemannian submersion $P \rightarrow B$.
Explicitly
\begin{equation*}
\begin{aligned}
A_2 &= d (d + 2)\\
A_3 &= d ||\mathcal{A} ||^2 = \frac{1}{4} d (d+2)^2 q^2 %stuff is a little off in this line but might have to do with some sort of normalization they have on the norm
\end{aligned}
\end{equation*}

In order to smoothly close up the metric at $s=0$, it is necessary and sufficient that $f$ extend smoothly to an odd function of $s$ with $f_s(0) = 1$, $g$ to an even function with $g(0)>0$, and $h_s$ to an odd function.
For $f,g,h_s$ solving the system (\ref{solitonEqns}), these conditions are equivalent to the right-hand sides of (\ref{solitonEqns}) having finite limits as $s \searrow 0$ and the following asymptotic behavior of $f,g,h_s$: %check the validity of this statement
\begin{equation} \label{leftasymps}
\begin{aligned}
\lim_{s \searrow 0} f(s) & =  0 	&\qquad& \lim_{s \searrow 0} f_s(s) & =  1 \\
\lim_{s \searrow 0} g(s) & > 0 	&\qquad&\lim_{s \searrow 0} g_s(s) & =  0 \\
\lim_{s \searrow 0} h_s(s) & = 0 
\end{aligned} 
\end{equation}

\begin{remark} \label{IveyCase}
Note that if $q = 0$ \big(i.e. $c_1(E) = 0 \in H^2(B, \mathbb{R})$\big) then (\ref{solitonEqns}) is the same as the system (1) considered by Ivey in ~\cite{Ivey94} (with $k=1$ and $n=d$).
In fact, because smooth complex Fano varieties are simply connected, $c_1(E) = 0$ in $H^2(B, \mathbb{Z})$ and $E \rightarrow B$ is the trivial line bundle $\underline{\mathbb{C}}_{B}$.
The one-parameter family of solutions Ivey obtains in ~\cite{Ivey94} therefore yields a one-parameter family of smooth complete $U(1)$-invariant gradient steady Ricci solitons on the total space of $\underline{\mathbb{C}}_{B}$ and proves theorem \ref{mainthm} in the case that $c_1(E) = 0$.
\end{remark}

Following Ivey ~\cite{Ivey94}, we introduce the change of variables %check the change of variables
\begin{equation} \label{changeofvars}
\begin{aligned}
X & =  \frac{g_s}{gh_s + dg_s + \frac{f_sg}{f}} 	&\qquad&Y  =&  \frac{1}{gh_s + dg_s + \frac{f_sg}{f}} \\
Z & =  \frac{\frac{f_sg}{f}}{gh_s + dg_s + \frac{f_sg}{f}} &\qquad&W  =&  \frac{\frac{f}{g}}{gh_s + dg_s + \frac{f_sg}{f}} \\
\end{aligned}
\end{equation}
and a new independent variable $t$ such that $\frac{g}{gh_s + dg_s + \frac{f_sg}{f}} dt = ds$.
The system (\ref{solitonEqns}) then becomes
\begin{equation}
\label{nonlin1}
\left\{ \begin{array}{ccl}
X_t & = & X(dX^2 + Z^2 -1) + \frac{A_2}{d}Y^2 - 2\frac{A_3}{d}W^2 \\
Y_t & = & Y(dX^2 + Z^2 - X) \\
Z_t & = & Z(dX^2+Z^2 -1) + A_3W^2 \\
W_t & = & W(dX^2 + Z^2 - 2X + Z )\\
\end{array} \right.
\end{equation}

\noindent Observe that the signs of $Y,W$ are constant for solutions $(X,Y,Z,W)$ of (\ref{nonlin1}) and that positivity of $Z$ is also preserved.
The asymptotics (\ref{leftasymps}) of $f, g,$ and $h_s$ at $s= 0$ imply that solutions $(X,Y,Z,W)$ of (\ref{nonlin1}) defined on $(t_{min}, t_{max})$ which correspond to soliton metrics must satisfy
$$\lim_{t \rightarrow t_{min}} (X,Y,Z,W)(t) = (0,0,1,0).$$
Such solutions necessarily have $t_{min} = -\infty$ since $(0,0,1,0)$ is a stationary solution of the ODE system (\ref{nonlin1}).
A summary of how to recover $f,g,$ and $h$ from solutions $(X,Y,Z,W)$ is given in Remark \ref{recover}.
We first describe the asymptotic behavior of solutions $(X,Y,Z,W)$ and a certain associated function $\cL$.

\begin{definition}
Given any solution $(X,Y,Z,W)(t)$ of the nonlinear ODE system (\ref{nonlin1}), we can recover $g$ (as a function of $t$) up to a multiplicative constant by solving $\frac{dg}{dt} = g X$.
Additionally, we may define $$\cL(t) \doteqdot g(t)Y(t)$$ up to this same multiplicative constant.
\end{definition}

\begin{remark}
$\cL^2$ is a constant multiple of the Lyapunov function considered in ~\cite{DancerWang09}.
\end{remark}

Denoting the scalar curvature by $S = \Delta_G h$, there is a first integral equation ~\cite{Ivey94} which states that $S + (h_s)^2 \equiv \mathcal{C}$ is constant.
In terms of $X,Y,Z,W$, the first integral equation says
\begin{equation}
\label{integraleqn}
dX^2+A_2Y^2+Z^2-A_3W^2 = 1 - \mathcal{C}\cL^2
\end{equation}
It is straightforward to verify using the ODE system that any solution $(X,Y,Z,W)(t)$ satisfies the first integral equation (\ref{integraleqn}) for appropriate choice of the constant $\cC$, which depends on the multiplicative constant in the definition of $g$.
Notice that if $\lim_{s \searrow 0} h_s(s) = 0$ then $\cC$ is the maximum value of the scalar curvature $S$ which is achieved on the image of the zero section $B_0 \subset E$.
By a result of Chen ~\cite{Chen09}, complete steady solitons have nonnegative scalar curvature. %could put citation and stronger statement here if wanted to, namely nonnegative scalar curvature with equality at a point iff ricci flat
Hence, $\cC \ge 0$ with equality if and only if $S$ and $h_s$ are identically $0$.
Since we seek non-Einstein gradient steady solitons, we shall only consider solutions $(X,Y,Z,W)(t)$ which satisfy the first integral equation (\ref{integraleqn}) with $\cC>0$.
Note that conversely a smooth complete Ricci soliton $(G(f,g), h_s)$ on $E$ with $\cC > 0$ is not Einstein. %may have to justify this

\begin{prop} \label{lestimates}
Assume $(X,Y,Z,W)(t)$ is a solution of the nonlinear system (\ref{nonlin1}) defined on $(-\infty, t_{max})$ such that
\begin{enumerate}[(i)]
\item $Y,W> 0$, %can we omit this assumption?
\item $\lim_{t\rightarrow - \infty} (X,Y,Z,W)(t) = (0,0,1,0)$, and
\item the constant $\cC$ in the first integral equation (\ref{integraleqn}) is positive $\cC>0$.
\end{enumerate}
Then the following asymptotics hold for $X,Y,Z,W$ as $t \rightarrow -\infty$:
\begin{enumerate}[(a)]
\item there exist constants $C_0' >0$ and $T_0\in (-\infty, t_{max})$ such that
$$ 0 \le X(t) \le C_0' e^{2t}$$
for all $t < T_0$,
\item for all $\epsilon > 0$, there exist constants $C_1, C_1' > 0$ and $T_1 \in (-\infty, t_{max})$ such that
$$C_1e^{(1-\epsilon)t} \le Y(t) \le C_1' e^{t} $$
for all $t < T_1$,
\item for all $\epsilon > 0$, there exist constants $C_2, C_2' > 0$ and $T_2 \in (-\infty, t_{max})$ such that
$$0 \le 1-Z(t) \le C_2' e^{2t} $$
for all $t < T_2$,
\item for all $\epsilon > 0$, there exist constants $C_3, C_3' > 0$ and $T_3 \in (-\infty, t_{max})$ such that
$$C_3e^{(2-\epsilon)t} \le W(t) \le C_3' e^{2t} $$
for all $t < T_3$, and
\item for all $\epsilon > 0$, there exist constants $C_4, C_4'> 0$ and $T_4 \in (-\infty, t_{max})$ such that
$$C_4 e^{(2-\epsilon)t} \le \cL^2(t) \le C_4'e^{2t}$$
for all $t < T_4$.
\end{enumerate}
Additionally, $\frac{X}{Y^2}$ remains bounded as $t \rightarrow -\infty$.
In particular, $|\int_{-\infty}^{T_0} X(t) dt| < \infty$ and $\lim_{t \rightarrow -\infty} \cL(t) = 0$.
\end{prop}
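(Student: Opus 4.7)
The plan is to bootstrap from rough $(1\pm\epsilon)$- and $(2\pm\epsilon)$-comparisons with the linearization of (\ref{nonlin1}) at $(0,0,1,0)$ to the sharp upper bounds stated, using the first integral (\ref{integraleqn}) to couple $1-Z$ to $Y^2$ and the relation $\cL^2 = g^2 Y^2$ to transfer $Y$-estimates to $\cL$-estimates. Since $Y_t/Y = dX^2 + Z^2 - X$ and $W_t/W = dX^2 + Z^2 + Z - 2X$ tend to $1$ and $2$ respectively, for each small $\epsilon > 0$ I pick $T_\epsilon$ so that both ratios are within $\epsilon$ of their limits on $(-\infty, T_\epsilon)$. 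Integrating backward yields rough two-sided bounds $C e^{(1+\epsilon)t} \le Y \le C' e^{(1-\epsilon)t}$ and $c e^{(2+\epsilon)t} \le W \le c' e^{(2-\epsilon)t}$, which already contain the lower bound in (b) and a weak version of (d).

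Next I would rewrite the first integral as
\begin{equation*}
1 - Z^2 = dX^2 + A_2 Y^2 - A_3 W^2 + \mathcal{C}\cL^2
\end{equation*}
and use $\cL^2 = g^2 Y^2$ together with $W^2 = O(e^{(4-2\epsilon)t}) = o(Y^2)$ to conclude that the right-hand side is positive and of order $X^2 + Y^2$ for $t$ sufficiently negative, giving both $Z < 1$ (lower bound in (c)) and $1 - Z = O(X^2 + Y^2)$. For $X$ itself, variation of parameters applied to $X_t = X(dX^2 + Z^2 - 1) + \frac{A_2}{d} Y^2 - \frac{2A_3}{d} W^2$, whose homogeneous coefficient $dX^2 + Z^2 - 1$ tends to $0$, gives $X(t) = O\bigl(\int_{-\infty}^t Y^2(s)\, ds\bigr) = O(Y(t)^2)$, with positivity of $X$ following from the positive sign of the forcing $\frac{A_2}{d} Y^2 - \frac{2A_3}{d} W^2$ at $t \ll 0$. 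This simultaneously yields the sign statement and boundedness of $X/Y^2$ in the final claim of the proposition, upgrades the bound to $1 - Z = O(Y^2)$, and establishes $L^1$-integrability of $X$ and $1 - Z$ at $-\infty$.

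Having $X, 1-Z \in L^1(-\infty)$ enables the bootstrap. From
\begin{equation*}
\tfrac{d}{dt}\log(e^{-t} Y) = dX^2 - X + (Z-1)(Z+1), \qquad \tfrac{d}{dt}\log(e^{-2t} W) = dX^2 - 2X + (Z-1)(Z+2),
\end{equation*}
with right-hand sides integrable at $-\infty$, integration shows $e^{-t}Y$ and $e^{-2t} W$ have finite positive limits, giving the sharp upper bounds $Y \le C_1' e^t$ in (b) and $W \le C_3' e^{2t}$ in (d). Feeding $Y \le C_1' e^t$ back into the first integral gives $1 - Z \le C_2' e^{2t}$ (part (c)), and into $X = O(Y^2)$ gives $X \le C_0' e^{2t}$ (part (a)). Finally, from $dg/dt = gX$ and $X \in L^1$ near $-\infty$, $g$ converges to a positive limit $g_{-\infty} > 0$, so $\cL^2 = g^2 Y^2$ inherits the two-sided exponential bounds of $Y^2$ (part (e)) and $\lim_{t \to -\infty} \cL = 0$ follows from $Y \to 0$.

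I expect the main obstacle to be the interlocking order of the bootstrap: each sharp upper bound requires $L^1$-integrability of the appropriate correction terms, which in turn requires first establishing the rough exponential bounds and then closing the loop through the degenerate $X$-direction (the zero eigenvalue in the linearization at $(0,0,1,0)$, whose other eigenvalues are $1, 2, 2$), by exploiting that $X$ is quadratically forced by the slowest mode $Y^2$.
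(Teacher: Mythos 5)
Your proposal is correct and follows essentially the same route as the paper: rough $e^{(1\pm\epsilon)t}$ and $e^{(2\pm\epsilon)t}$ estimates from integrating $Y_t/Y$ and $W_t/W$, the first integral to force $dX^2+Z^2-1\le 0$ for $t\ll -1$ and to couple $1-Z$ to $Y^2$, positivity and quadratic smallness of $X$ from the forced linear equation for $X$, and a bootstrap to the sharp upper bounds (your device of showing $e^{-t}Y$ and $e^{-2t}W$ converge to positive limits is a mild variant of the paper's re-integration of the logarithmic derivatives, and in fact gives slightly sharper two-sided bounds). Two steps deserve the care the paper gives them: first, the assertion $\int_{-\infty}^t Y^2\,ds = O(Y(t)^2)$, which underlies the boundedness of $X/Y^2$, does not follow from the rough bounds alone (they only give $O(e^{-4\epsilon t}Y(t)^2)$); the paper closes this via $(Y^2)_t \ge Y^2$ for $t\ll -1$, hence $Y(t)^2 \ge \int_{-\infty}^t Y^2\,ds$, while in your scheme it should be deferred until after $e^{-t}Y \to L>0$ is established, using only the integrable bound $X = O\bigl(\int_{-\infty}^t Y^2\bigr) = O(e^{(2-2\epsilon)t})$ before that point. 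Second, $1-Z = O(X^2+Y^2)$ uses $\cC\cL^2 = \cC g^2 Y^2 = O(Y^2)$, which requires $g$ bounded near $-\infty$ and hence $X\ge 0$ first, so that estimate must follow, not precede, your sign argument for $X$; with these reorderings the argument closes exactly as in the paper.
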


\begin{proof}
The proof proceeds by applying Gronwall's inequality and integrating the differential equations for $(X,Y,Z,W)$ in (\ref{nonlin1}).
%\begin{enumerate}
%\item
Integrating $Y_t = Y(dX^2 + Z^2 - X)$ yields $$Y(t_1) = Y(t_0) e^{\int_{t_0}^{t_1} (dX^2 + Z^2 - X) dt}$$ for $t_0 < t_1$.
Since $\lim_{t \rightarrow -\infty} (X,Z) = (0,1)$ and $Y>0$, the integrand converges to 1 as $t \rightarrow -\infty$ and it follows that for every $\epsilon > 0$ there exist constants $C_1, C_1''>0$ and $T_1'$ such that $C_1e^{(1-\epsilon)t} \le Y(t) \le C_1'' e^{(1+ \epsilon)t} $ for all $t < T_1'$.

%\item
Integrating $W_t = W(dX^2 + Z^2 - 2X + Z )$ yields $$W(t_1) = W(t_0) e^{\int_{t_0}^{t_1} dX^2 + Z^2 - 2X + Z dt}$$ for all $t_0 < t_1$.
Since the integrand converges to 2 as $t \rightarrow -\infty$, it follows that for every $\epsilon > 0$ there exist constants $C_3, C_3''>0$ and $T_3'$ such that
$C_3e^{(2-\epsilon)t} \le W(t) \le C_3''e^{(2+\epsilon)t} $ for $t < T_3'$.

%\item
The first integral equation with $C>0$ implies that $$dX^2 + Z^2 - 1 = A_3W^2-A_2Y^2 - Cg^2Y^2 \le A_3W^2 - A_2Y^2.$$
It then follows from the asymptotics of $Y,W$ that $dX^2 + Z^2 - 1 \le A_3W^2-A_2Y^2 \le 0$ for $t\ll-1$.

%\item 
The asymptotics of $Y,W$ also imply that $$X_t = X(dX^2+Z^2-1) + \frac{A_2}{d}Y^2 - 2\frac{A_3}{d}W^2 \ge X(dX^2+Z^2-1)$$ for $t\ll-1$.
Hence, $X(t_1) \ge X(t_0) e^{\int_{t_0}^{t_1} dX^2 + Z^2 - 1 dt}$ for $t_0 < t_1 \ll -1$ by Gronwall's inequality.
Since the integrand is bounded above by $0$ the exponential term is bounded between $0$ and $1$.
Thus, taking $t_0 \rightarrow -\infty$ yields $X(t) \ge 0$ for all $t \ll -1$.

%\item
Using the fact that $X \ge 0$ and $ dX^2 + Z^2-1 \le 0$ for $t \ll -1$ the above arguments may be strengthened to conclude that for every $\epsilon > 0$ there exist $C_1, C_1', C_3, C_3' > 0$ and $T_1, T_3$ such that 
$C_1e^{(1-\epsilon) t} \le Y(t) \le C_1'e^{t}$ for $t < T_1$ and
$C_3e^{(2-\epsilon)t} \le W(t) \le C_3'e^{2t}$ for $t < T_3$.

%\item 
Because $0 \le X$ and $dX^2+Z^2-1 \le 0$ for $t \ll-1$, it follows that 
$$X_t = X(dX^2+Z^2-1)+\frac{A_2}{d}Y^2-2\frac{A_3}{d}W^2 \le \frac{A_2}{d}Y^2-2\frac{A_3}{d}W^2 \le \frac{A_2}{d} Y^2$$ for $t \ll-1$.
Hence, $X_t(t) \le C_1'e^{2t}$ for $t\ll-1$ and integrating with respect to $t$ yields that
$X(t) \le C_0'e^{2t}$ for all $t\ll-1$.

Moreover, for $t \ll -1$, $$X_t \le \frac{A_2}{d} Y^2 \implies X(t) \le \frac{A_2}{d} \int_{-\infty}^t Y(\tau)^2 d\tau < \infty$$
Similarly, for $t \ll -1 $, $$(Y^2)_t = 2Y^2(dX^2 + Z^2 - X) \ge Y^2 \implies Y(t)^2 \ge \int_{-\infty}^t Y(\tau)^2 d\tau$$
Hence,
$$\frac{X(t)}{Y(t)^2} \le \frac{A_2}{d} \text{ for $t \ll -1$,}$$
that is, $\frac{X}{Y^2}$ remains bounded as $t \rightarrow -\infty$.

%\item
Integrating $\frac{d}{dt} \cL^2 = 2\cL^2 (dX^2 +Z^2)$ yields that $$\cL^2(t_1) = \cL^2(t_0) e^{2\int_{t_0}^{t_1} dX^2 + Z^2 dt}$$ for all $t_0 < t_1$.
Because $dX^2+Z^2 \nearrow 1$ as $t \rightarrow -\infty$, it follows that for every $\epsilon > 0$ there exist constants $C_4, C_4' > 0$ and $T_4 \in \mathbb{R}$ such that $C_4 e^{(2 - \epsilon)t} \le \cL^2(t) \le C_4' e^{2t}$ for all $t < T_4$.

%\item 
The fact that $0 \le 1-Z$ for all $t\ll -1$ follows from the fact that $\lim_{t \rightarrow -\infty} Z =1$ and $dX^2+Z^2-1 \le 0$ for all $t \ll -1$ .
For the upper bound on $1-Z$, note that for all $t \ll -1$
\begin{align*}
(1-Z)_t & = (1-Z) (dX^2 + Z^2 -1 ) -dX^2-Z^2 + 1 -A_3W^2\\
& =  (1-Z) (dX^2 + Z^2 -1 ) + A_2Y^2 - 2A_3W^2 +\cC\cL^2\\
& \le A_2Y^2 - 2A_3W^2 +\cC \cL^2\\
& \le A_2Y^2 + \cC\cL^2\\
& \le C_1'e^{2t} + C_4'e^{2t}
\end{align*}
The upper bound then follows from integrating with respect to $t$.
%\end{enumerate}
\end{proof}

\begin{definition}
Notice that $|\int_{-\infty}^{T_0} X(t) dt| < \infty$ implies $\lim_{t \rightarrow -\infty} g(t)$ exists and is finite.
We denote this limit by $$\lambda \doteqdot \lim_{t \searrow -\infty} g(t)$$ so $g(t) = \lambda e^{\int_{-\infty}^t X(\tau) d\tau}$.
Moreover, we are free to choose $\lambda \in \mathbb{R}$ since $g$ is only defined up to a multiplicative constant.
Of course, for $G(f,g)$ to be a metric, we must choose $\lambda > 0$.
\end{definition}

%removing this section about symmetry of the equations and assumptions about Y, W
\comment{
\begin{remark}
Notice that the nonlinear ODE system has the symmetry $Y \mapsto -Y$.
Moreover, $Y_t = Y(dX^2 + Z^2 - X)$ implies that the sign of $Y$ is preserved along integral curves.
Henceforth, we can assume without loss of generality that our solution $(X,Y,Z,W)(t)$ of (\ref{nonlin1}) has $Y \ge 0$.
Note that $Y_t = Y(dX^2 + Z^2 - X)$ also implies that $Y > 0$ if and only if $Y(t_0) > 0$ for some $t_0$.

The same statement also holds for $W$.
Thus, the previous asymptotic estimates also hold if $Y$ or $W$ are negative with the sign of the respective constants $C_1, C_1', C_3, C_3'$ reversed.
%However, if a solution $(X,Y,Z,W)(t)$ is to correspond to a gradient steady soliton then it must be the case that $Y$ and $W$ are positive (for all $t$ in the interval of existence).
\end{remark}
}

\begin{prop} \label{monotone}
Assume $(X,Y,Z,W)(t)$ is a solution of the nonlinear ODE system (\ref{nonlin1}) such that 
$\lim_{t \searrow -\infty} (X,Y,Z,W)(t) = (0,0,1,0)$ and
$| \int_{-\infty}^{T_0} X(t) dt| < \infty$ for some $T_0$ in the interval of existence for the solution.

If $Y(t_0) > 0$ for some $t_0$ and $\lambda > 0$, then $\cL(t)$ is strictly increasing.
Moreover, $\cL(t) > 0$ and $0 < \int_{-\infty}^{T_0} \cL(t) dt < \infty$.
\end{prop}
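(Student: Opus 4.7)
The plan is to reduce everything to a clean first-order ODE for $\cL$. First I would verify that $\cL$ is a well-defined positive function on the interval of existence. Since $Y_t = Y(dX^2 + Z^2 - X)$, the sign of $Y$ is preserved along the flow, so $Y(t_0) > 0$ forces $Y(t) > 0$ throughout. The hypothesis $|\int_{-\infty}^{T_0} X\,dt| < \infty$ together with $g_t = gX$ gives the explicit formula $g(t) = \lambda \exp\bigl(\int_{-\infty}^{t} X(\tau)\,d\tau\bigr)$, which is well-defined on the whole interval of existence and, provided $\lambda > 0$, positive there. Hence $\cL = gY > 0$.

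Next, direct differentiation produces the key identity
$$\cL_t = g_t Y + g Y_t = gXY + gY(dX^2 + Z^2 - X) = (dX^2 + Z^2)\,\cL.$$
To conclude strict monotonicity I would argue $dX^2 + Z^2 > 0$ on the whole interval: since $\lim_{t\to-\infty} Z = 1$ and positivity of $Z$ is preserved by the system (as already noted right after (\ref{nonlin1}); concretely, $Z_t|_{Z=0} = A_3 W^2 \ge 0$, and if $W\equiv 0$ then by uniqueness $Z$ cannot reach $0$ from positive values), one has $Z > 0$ everywhere, so $dX^2 + Z^2 \ge Z^2 > 0$. Together with $\cL > 0$, this yields $\cL_t > 0$.

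For the integral bound I would exploit the asymptotic $dX^2 + Z^2 \to 1$ as $t \to -\infty$. Given any $\epsilon \in (0,1)$, choose $T_\epsilon \le T_0$ such that $dX^2(\tau) + Z(\tau)^2 > 1 - \epsilon$ for all $\tau < T_\epsilon$. Integrating the identity $(\log \cL)_t = dX^2 + Z^2$ from $t$ up to $T_\epsilon$ yields
$$\cL(t) \le \cL(T_\epsilon)\, e^{-(1-\epsilon)(T_\epsilon - t)} = C_\epsilon\, e^{(1-\epsilon) t}$$
for all $t < T_\epsilon$, which is integrable near $-\infty$. Adding the bounded contribution of $\cL$ on $[T_\epsilon, T_0]$ gives $\int_{-\infty}^{T_0} \cL\,dt < \infty$, while $\cL > 0$ supplies the strict lower bound $0 < \int_{-\infty}^{T_0} \cL\,dt$.

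I do not anticipate a serious obstacle: the whole proposition is essentially a consequence of the clean identity $\cL_t = (dX^2 + Z^2)\,\cL$, which forces $\cL$ to behave like a constant multiple of $e^t$ near $-\infty$. The one point demanding a line of care is the positivity of $Z$ on the full interval of existence, but this is immediate from the preserved-sign property of (\ref{nonlin1}).
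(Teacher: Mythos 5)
Your proposal is correct and follows essentially the same route as the paper: the identity $\cL_t = (dX^2+Z^2)\cL$, sign preservation for $Y$, positivity of $Z$ (the paper uses the Gronwall comparison $Z_t \ge Z(dX^2+Z^2-1)$ where you use a first-crossing argument, but both work), and then monotonicity. Your explicit exponential bound $\cL(t) \le C_\epsilon e^{(1-\epsilon)t}$ for the integrability step is actually more careful than the paper's one-line appeal to ``$\cL \to 0$ and $\cL$ increasing,'' which by itself would not rule out non-integrable decay; this is a welcome clarification rather than a deviation.
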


\begin{proof}
From the ODE system (\ref{nonlin1}), it follows that $\frac{d\cL}{dt} = \cL (dX^2 + Z^2)$.
If $Y(t_0) >0$ for some $t_0 \in \mathbb{R}$ then the ODE system (\ref{nonlin1}) implies that $Y > 0$ for all $t$.
Also, because $\lim_{t \rightarrow -\infty} Z(t) = 1$, $Z(t) > 0$ for all $t\ll -1$.
Since $$Z_t = Z(dX^2 + Z^2 -1 ) + A_3W^2 \ge Z(dX^2 + Z^2 -1),$$ it follows that $Z(t)>0$ for all $t$.
Hence, $\cL (dX^2 + Z^2) > 0$ and so $\cL(t)$ is strictly increasing.
The second statement then follows from the fact that $\lim_{t \rightarrow -\infty} \cL = 0$ and $\cL$ is strictly increasing.
\end{proof}

Recall that the constant $\cC$ in the first integral equation depends on $\lambda$.
From the asymptotics of solutions $(X,Y,Z,W)$, the following proposition recovers the exact dependence.

\begin{prop} \label{constants}%investigate why we need this theorem exactly 
Let $(X,Y,Z,W)(t)$ be a solution of the ODE system (\ref{nonlin1}) such that
\begin{enumerate}[(i)]
\item $Y,W>0$, 
\item $\lim_{t \rightarrow -\infty} (X,Y,Z,W)(t) = (0,0,1,0)$, and
\item the constant $\cC$ in the first integral equation (\ref{integraleqn}) is positive $\cC > 0$ (a condition which is independent of the choice of $\lambda$).
\end{enumerate}
Then $$\cC \lambda^2 = 2 \left( \lim_{t \rightarrow -\infty} \frac{1-Z}{Y^2} \right) - A_2.$$
\end{prop}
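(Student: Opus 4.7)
The plan is to derive the claimed identity directly from the first integral equation (\ref{integraleqn}) by dividing through by $Y^2$ and passing to the limit $t \to -\infty$, using the asymptotic bounds established in Proposition \ref{lestimates} to control the extraneous terms.

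First, I would rewrite (\ref{integraleqn}) in the form
\begin{equation*}
(1-Z)(1+Z) \;=\; dX^2 + A_2 Y^2 - A_3 W^2 + \cC \cL^2,
\end{equation*}
and then divide both sides by $Y^2$ to obtain
\begin{equation*}
\frac{1-Z}{Y^2}\,(1+Z) \;=\; d\,\frac{X^2}{Y^2} + A_2 - A_3\,\frac{W^2}{Y^2} + \cC\,\frac{\cL^2}{Y^2}.
\end{equation*}

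The next step is to identify the limits of each of the four terms on the right as $t \to -\infty$. Since $\cL = gY$, we have $\cL^2/Y^2 = g^2$, and by the definition of $\lambda$ immediately after Proposition \ref{lestimates}, $g(t) \to \lambda$, so $\cL^2/Y^2 \to \lambda^2$. For the ratio $X^2/Y^2$, I would write $X^2/Y^2 = X \cdot (X/Y^2)$ and invoke the boundedness of $X/Y^2$ from Proposition \ref{lestimates} together with $X \to 0$ to conclude $X^2/Y^2 \to 0$. For $W^2/Y^2$, I would use the upper bound $W \le C_3' e^{2t}$ and the lower bound $Y \ge C_1 e^{(1-\epsilon)t}$ (for any fixed $\epsilon \in (0,1)$) from parts (b) and (d) of Proposition \ref{lestimates} to get $W^2/Y^2 \le (C_3'/C_1)^2 e^{(2+2\epsilon)t} \to 0$.

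Finally, since $Z(t) \to 1$ so that $1 + Z(t) \to 2$, rearranging and taking the limit shows that $\lim_{t \to -\infty}(1-Z)/Y^2$ exists (as the other three limits exist) and satisfies
\begin{equation*}
2\lim_{t\to-\infty}\frac{1-Z}{Y^2} \;=\; A_2 + \cC\lambda^2,
\end{equation*}
which is the claimed formula. There is no genuine obstacle here beyond bookkeeping the decay rates; the only delicate point is verifying that $X^2/Y^2$ tends to zero, which is precisely the role of the auxiliary estimate $X/Y^2 = O(1)$ at the end of Proposition \ref{lestimates}, since the cruder bounds $X \lesssim e^{2t}$ and $Y \gtrsim e^{(1-\epsilon)t}$ alone give only $X^2/Y^2 = O(e^{(2+2\epsilon)t})$, which suffices but relies essentially on the same estimate.
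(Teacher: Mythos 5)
Your proposal is correct and follows essentially the same route as the paper: both divide the first integral equation by $Y^2$, use $\cL^2/Y^2 = g^2 \to \lambda^2$ and the factorization $1-Z^2 = (1-Z)(1+Z)$ with $Z \to 1$, and kill the $dX^2/Y^2$ and $A_3W^2/Y^2$ terms via the asymptotics of Proposition \ref{lestimates}. Your write-up merely spells out the decay-rate bookkeeping that the paper leaves implicit.
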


\begin{proof}
It follows from the first integral equation (\ref{integraleqn}) that
$$\cC g^2 = \frac{-dX^2-A_2Y^2+A_3W^2-Z^2+1}{Y^2}$$
Taking the limit as $t \rightarrow -\infty$, we obtain
$$\cC \lambda^2 = \lim_{t \rightarrow -\infty} \frac{-dX^2+A_3W^2}{Y^2} + 2\lim_{t \rightarrow -\infty} \frac{1-Z}{Y^2} - A_2$$
The asymptotics of $X,W,Y$ at $-\infty$ imply that the first limit is $0$ and the result follows.
\end{proof}

\begin{remark} \label{recover}
Given a solution $(X,Y,Z,W)(t)$ of the nonlinear ODE system (\ref{nonlin1}) together with a distinguished point $t_0$ in the interval of existence, one can recover $f,g,h_s,$ and $s$ via
\begin{align*}
\frac{dg}{g} & = X dt &ds & =  gY dt = \cL dt\\
df & = g\frac{ZW}{Y} dt &h_s & = \frac{1- dX-Z}{\cL}\\
\text{Namely, }\qquad g(t) & = g(t_0) e^{\int_{t_0}^t X(\tau) d\tau} &s(t) & = s(t_0) + \int_{t_0}^t \cL(\tau) d\tau\\
f(t) & = f(t_0) + \int_{t_0}^t  g(\tau) \frac{Z(\tau)W(\tau)}{Y(\tau)} d\tau &h_s & = \frac{1- dX-Z}{\cL}
\end{align*}
Notice that we have some freedom to choose $g(t_0), f(t_0),$ and $s(t_0)$ as we see fit.
Additionally, in the setting of proposition \ref{monotone}, $\frac{ds}{dt} = \cL > 0$ implies $s(t)$ is strictly increasing as a function of $t$ and possesses an inverse. 
\end{remark}

\begin{thm} \label{recoverThm}
Assume $(X,Y,Z,W): (-\infty, t_{max}) \rightarrow \mathbb{R}^4$ is a solution of the nonlinear ODE system (\ref{nonlin1}) such that
\begin{enumerate}[(i)]
\item $Y, W, \cC, \lambda > 0$, and
\item $\lim_{t \rightarrow -\infty} (X,Y,Z,W)(t) = (0,0,1,0)$
\end{enumerate}
and $t_0 \in (-\infty, t_{max})$.
Let $f,g,h_s, s$ be obtained from $(X,Y,Z,W)(t)$ as specified above.
Then, for particular choices of $s(t_0)$ and $f(t_0)$,
\begin{align*}
\lim_{t \rightarrow -\infty} s(t) & = 0\\ %&\lim_{t \rightarrow +\infty} s(t) & =  \infty \\ 
\lim_{s \searrow 0} f(s) & =  0  &\lim_{s \searrow 0} f_s(s) & =   \lim_{t \rightarrow -\infty} \frac{W}{Y^2} \\
\lim_{s \searrow 0} g(s) & >  0 &\lim_{s \searrow 0} g_s(s) & =  0 \\
\lim_{s \searrow 0} h_s(s) & =  0 
\end{align*}
In particular, the metric 
$$G(f,g) = ds^2 + \hat{g} \big( f(s), g(s) \big) \qquad \text{ on } E \setminus B_0 = \big(0, s(t_{max})\big) \times P$$
extends smoothly over the zero section if and only if $$\lim_{t \rightarrow -\infty} \frac{W}{Y^2} = 1.$$%phrasing
\end{thm}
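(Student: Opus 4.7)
The plan is to derive each of the claimed limits by combining the asymptotic estimates of Proposition \ref{lestimates} with the recovery formulas of Remark \ref{recover}, and then to reduce the smoothness assertion to the criterion recorded in the discussion following (\ref{leftasymps}).

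First I address $s$ and $g$. The bound $\cL(t) \le \sqrt{C_4'}\, e^t$ from Proposition \ref{lestimates}(e) makes $\int_{-\infty}^{t_0} \cL(\tau)\, d\tau$ absolutely convergent, and choosing $s(t_0)$ equal to this integral yields $s(t) = \int_{-\infty}^t \cL(\tau)\, d\tau$, so $\lim_{t\to -\infty} s(t) = 0$. Similarly, the bound $X \le C_0' e^{2t}$ is integrable at $-\infty$, so the formula $g(t) = \lambda\, e^{\int_{-\infty}^t X(\tau)\, d\tau}$ from the definition preceding this theorem gives $\lim_{t \to -\infty} g(t) = \lambda > 0$.

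Next, the chain rule $\frac{d}{ds} = \frac{1}{\cL}\frac{d}{dt}$ produces $g_s = X/Y$ and $f_s = ZW/Y^2$, while Remark \ref{recover} lists $h_s = (1 - dX - Z)/\cL$. Applying Proposition \ref{lestimates}: $g_s \le (C_0'/C_1)\, e^{(1+\epsilon)t} \to 0$; $Z \to 1$ forces $f_s \to \lim_{t\to-\infty} W/Y^2$; and the numerator bound $|1 - dX - Z| \le (dC_0' + C_2')\, e^{2t}$ together with $\cL \ge \sqrt{C_4}\, e^{(1-\epsilon/2)t}$ gives $h_s \to 0$. For $f$, the integrand $g \cdot ZW/Y$ is bounded by $(\text{const}) e^{(1+\epsilon)t}$ for $t \ll -1$ and hence integrable at $-\infty$; choosing $f(t_0) = \int_{-\infty}^{t_0} g Z W/Y\, d\tau$ yields $f(t) = \int_{-\infty}^t g Z W/Y\, d\tau \to 0$.

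Finally, the paragraph following (\ref{leftasymps}) records that $G(f,g)$ extends smoothly over $B_0$ precisely when $f$ extends to a smooth odd function of $s$ with $f_s(0) = 1$, $g$ to a smooth even function with $g(0) > 0$, and $h_s$ to a smooth odd function — equivalently, when the right-hand sides of (\ref{solitonEqns}) have finite limits as $s \searrow 0$ together with the asymptotics (\ref{leftasymps}). The limits computed above verify (\ref{leftasymps}) apart from $f_s(0) = 1$, which holds exactly when $\lim_{t\to-\infty} W/Y^2 = 1$. The ``only if'' direction is then immediate. The main technical step I anticipate is the ``if'' direction: controlling the potentially singular terms on the right-hand sides of (\ref{solitonEqns}), such as $f_s g_s/(fg)$ and $f_s h_s/f$, where one must use $f_s \to 1$ (so $f$ vanishes linearly in $s$ near $s=0$) together with sharpened asymptotic rates for $g_s$ and $h_s$ to conclude finiteness of the limits.
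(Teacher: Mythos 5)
Your treatment of the first block of limits (the choices of $s(t_0)$ and $f(t_0)$, and the limits of $f$, $f_s$, $g$, $g_s$, $h_s$) matches the paper's argument and is fine. The problem is the smooth-extension claim: you correctly isolate the potentially singular terms on the right-hand sides of (\ref{solitonEqns}) and correctly note that the "if" direction requires showing they have finite limits as $s \searrow 0$, but you then stop at "one must use $f_s \to 1$ together with sharpened asymptotic rates for $g_s$ and $h_s$ to conclude finiteness." That is precisely the nontrivial content of the theorem, and it is not supplied. The difficulty is that $g_s/f$ is a genuine $0/0$ form whose resolution is \emph{not} contained in Proposition \ref{lestimates}: since $f \sim \lambda Y$ and $g_s = X/Y$ near $t = -\infty$, the limit of $g_s/f$ is governed by $\lim_{t\to-\infty} X/Y^2$, and Proposition \ref{lestimates} only gives that $X/Y^2$ is \emph{bounded}, not that the limit exists. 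The paper closes this gap by proving $\lim_{t\to-\infty} X/Y^2 = \frac{A_2}{2d}$ via a recursive l'H\^opital-type argument applied to $X_t/(Y^2)_t$, using the ODE system to show the coefficient of $X/Y^2$ in that quotient tends to $0$ while the remaining term tends to $\frac{A_2}{2d}$. Without establishing that this limit exists, "sharpened asymptotic rates" do not suffice — two-sided exponential bounds with an $\epsilon$ loss (which is all Proposition \ref{lestimates} provides for $Y$) cannot pin down the limit of a ratio.

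The term $\frac{h_s}{f}$ requires a further input you do not mention: the paper writes $\frac{h_s}{f} = \frac{h_s}{g_s}\cdot\frac{g_s}{f}$ and evaluates $\lim_{s\searrow 0}\frac{h_s}{g_s}$ using Proposition \ref{constants}, i.e.\ the identity $\cC\lambda^2 = 2\lim_{t\to-\infty}\frac{1-Z}{Y^2} - A_2$, together with the value $\lim X/Y^2 = \frac{A_2}{2d}$ just obtained. So both singular terms hinge on exact limits of the ratios $X/Y^2$ and $(1-Z)/Y^2$, neither of which is produced by the estimates you cite. To complete your proof you would need to add the l'H\^opital computation for $X/Y^2$ and invoke Proposition \ref{constants} for $(1-Z)/Y^2$.
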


\begin{proof} %this proof will need some editing
It follows from propositions \ref{lestimates} and \ref{monotone} that $0 < \int_{-\infty}^{t_0} \cL(\tau) d\tau < \infty$.
Taking $s(t_0) = \int_{-\infty}^{t_0} \cL(\tau) d\tau$ implies that $\lim_{t \rightarrow -\infty} s(t) = 0$.

Proposition \ref{lestimates} similarly implies that $\int_{-\infty}^{t_0} g(\tau) \frac{Z(\tau)W(\tau)}{Y(\tau)} d\tau$ is finite.
Setting $f(t_0) = \int_{-\infty}^{t_0} g(\tau) \frac{Z(\tau)W(\tau)}{Y(\tau)} d\tau$ thereby implies that $\lim_{s \searrow 0} f(s) = 0$.

The remaining limits follow from similar applications of proposition \ref{lestimates}, namely
\begin{equation*}
\begin{aligned}
\lim_{s \searrow 0} f_s(s) &= \lim_{t \rightarrow -\infty}  \frac{ZW}{Y^2} =  \lim_{t \rightarrow -\infty} \frac{W}{Y^2},\\
\lim_{s \searrow 0} g(s) &= \lim_{t \rightarrow -\infty} g(t) = \lambda > 0,\\
\lim_{s \searrow 0} g_s(s) &= \lim_{t \rightarrow -\infty}  \frac{X}{Y} = 0, \text{ and}\\
\lim_{s \searrow 0} h_s(s) &= \lim_{t \rightarrow -\infty} \frac{1-Z-dX}{\cL} = 0.
\end{aligned}
\end{equation*}

To prove the last statement, assume that $\lim_{t \rightarrow -\infty} \frac{W}{Y^2} = 0$.
Notice that it suffices to show that $\frac{g_s}{f}$ and $\frac{h_s}{f}$ have finite limits as $s \searrow 0$, for these are the only terms on the right-hand sides of (\ref{solitonEqns}) whose limits may not exist.
First, we claim that $\lim_{t \rightarrow -\infty} \frac{X}{Y^2} = \frac{A_2}{2d}$.
By proposition \ref{lestimates}, $\frac{X}{Y^2}$ is bounded as $t \rightarrow -\infty$.
Additionally,
$$\frac{X_t}{(Y^2)_t} = \frac{X}{Y^2} \frac{dX^2 + Z^2 - 1}{2(dX^2+Z^2-X)} + \frac{ \frac{A_2}{d} - 2 \frac{A_3}{d} \frac{W^2}{Y^2}}{2(dX^2+Z^2-X)}$$
The second term on the right-hand side limits to $\frac{A_2}{2d}$ as $t \rightarrow -\infty$ and the coefficient of $\frac{X}{Y^2}$ limits to $0$ as $t \rightarrow -\infty$.
Hence, a recursive form of l'Hopital's rule implies that $\lim_{t \rightarrow -\infty} \frac{X}{Y^2} = \frac{A_2}{2d}$.

It follows that
\begin{equation*}
\begin{aligned}
\lim_{s \searrow 0} \frac{g_s}{f} &= \lim_{t \rightarrow -\infty} \frac{ \frac{d}{dt} g_s}{\frac{df}{dt}}\\
&=\lim_{t \rightarrow -\infty} \frac{ ( \frac{X}{Y} )_t}{g \frac{ZW}{Y}}\\
&= \frac{1}{\lambda} \lim_{t \rightarrow -\infty} \frac{ -X + \frac{A_2}{d}Y^2 - 2 \frac{A_3}{d}W^2 + X^2}{W}\\
&=\frac{1}{\lambda} \lim_{t \rightarrow - \infty} \frac{ -\frac{X}{Y^2} + \frac{A_2}{d}}{\frac{W}{Y^2}} \qquad \text{(by proposition \ref{lestimates})}\\
&= \frac{A_2}{2d\lambda} \qquad \text{since } \lim_{t \rightarrow -\infty} \frac{X}{Y^2} = \frac{A_2}{2d} \text{ and } \lim_{t \rightarrow - \infty} \frac{W}{Y^2} = 1
\end{aligned}
\end{equation*}

Next,
\begin{equation*}
\begin{aligned}
\lim_{s \searrow 0} \frac{h_s}{g_s} & = \lim_{t \rightarrow -\infty} \frac{1-dX-Z}{gX}\\
& = \frac{1}{\lambda} \lim_{t \rightarrow -\infty} \frac{ \frac{1-Z}{Y^2} - d \frac{X}{Y^2}}{\frac{X}{Y^2}}\\
& = \frac{1}{\lambda} \frac{ \frac{1}{2}(\cC \lambda^2 + A_2) - \frac{A_2}{2}}{\frac{A_2}{2d}} \qquad \text{(by proposition \ref{constants})}\\
& = \frac{d \cC \lambda}{A_2}
\end{aligned}
\end{equation*}
Therefore, $\frac{h_s}{f} = \frac{h_s}{g_s} \frac{g_s}{f}$ has a finite limit of $\frac{\cC}{\lambda}$ as $s \searrow 0$.
\end{proof}

%%%%%%%%%%%%%%%%%%%%%%%%%%%%%%%%%%%%%%%%%%%%%%%%%%%%%%%%%%%%%%%%%%%%%%
\section{Existence}
%%%%%%%%%%%%%%%%%%%%%%%%%%%%%%%%%%%%%%%%%%%%%%%%%%%%%%%%%%%%%%%%%%%%%%
In light of theorem \ref{recoverThm}, we proceed towards the proof of theorem \ref{mainthm} by showing that there exist solutions $(X,Y,Z,W)(t)$ satisfying
$\lim_{t \rightarrow -\infty} \frac{W}{Y^2} = 1$.
Moreover, for any $\Lambda >0$, there exists a unique such solution modulo translation in $t$ satisfying $$\Lambda = \cC \lambda^2 = 2 \left( \lim_{t \rightarrow -\infty} \frac{1-Z}{Y^2} \right) - A_2.$$

Consider the following ODE system:
\begin{equation}
\label{nonlin2}
 \left\{  \begin{array}{ccl} 
X_{\tl{t}} & = & -X(dX^2 + \tl{Z}^2-2\tl{Z}) - \frac{A_2}{d}\tl{Y} + 2\frac{A_3}{d}W^2  \\
\tl{Y}_{\tl{t}} & = & -2\tl{Y}(dX^2 - X + \tl{Z}^2-2\tl{Z}+1)  \\
\tl{Z}_{\tl{t}} & = & -\tl{Z}(dX^2+\tl{Z}^2 -3\tl{Z}+2) +dX^2 + A_3W^2  \\
W_{\tl{t}} & = & -W(dX^2 - 2X +\tl{Z}^2 - 3\tl{Z}+ 2 ) 
\end{array}  \right.
\end{equation}
which is obtained from ODE system (\ref{nonlin1}) via the change of variables
\begin{equation*}
\begin{array}{ccl}
\tl{Y} & = & Y^2\\
\tl{Z}  & = & 1-Z\\
\tl{t} & = & -t\\
\end{array}
\end{equation*}
%I may or may not omit the tildes from now on
This system (\ref{nonlin2}) has a stationary solution at the origin $(X,\tl{Y},\tl{Z},W) = (0,0,0,0)$ with linearization given by
\begin{equation}
\label{lin2}
\frac{du}{d\tl{t}} = 
\left( \begin{array}{cccc}
0 & -\frac{A_2}{d} & 0 & 0 \\
0 & -2 & 0 & 0 \\
0 & 0 & -2 & 0 \\
0 & 0 & 0 & -2 \\ \end{array} \right) 
u.
\end{equation}
Note that the origin is not a hyperbolic equilibrium point.
Nonetheless, we should expect that there exist solutions in the stable manifold of the nonlinear system (\ref{nonlin2}) at the origin which quantitatively have the same asymptotic behavior as solutions of the linear system (\ref{lin2}) at the origin.
The goal of this section is to make this intuition rigorous and prove that
\begin{thm} \label{mainthmasymp}
Given $\alpha, \beta \in \mathbb{R}$, there exists a solution $(X, \tl{Y}, \tl{Z}, W)(\tl{t})$ of the nonlinear ODE system (\ref{nonlin2}) satisfying
\begin{enumerate}[(a)]
\item $\tl{Y}(\tl{t}) > 0$ for all $\tl{t}$,
\item $\lim_{\tl{t} \rightarrow \infty} e^{2\tl{t}}|(X, \tl{Y}, \tl{Z}, W)(\tl{t})| = 0$,
\item $\lim_{\tl{t} \rightarrow \infty} \frac{\tl{Z}}{\tl{Y}} = \alpha$, and
\item $ \lim_{\tl{t} \rightarrow \infty} \frac{W}{\tl{Y}} = \beta$.
\end{enumerate}
\end{thm}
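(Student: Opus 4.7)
The plan is to construct, for each $(\alpha,\beta)\in\mathbb{R}^2$, the desired solution as a small perturbation of the leading linear decay mode at the origin of (\ref{nonlin2}), via a Banach contraction mapping on a weighted function space, and then extend backwards in $\tl{t}$ by standard ODE continuation.

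First, the linearization (\ref{lin2}) has spectrum $\{0,-2,-2,-2\}$: the $0$-eigenvector is $(1,0,0,0)$, spanning the center subspace $E_0$, and the three-dimensional stable eigenspace $E_{-2}$ is spanned by $v_1=(A_2/(2d),1,0,0)$, $v_2=(0,0,1,0)$, $v_3=(0,0,0,1)$. For each $(\alpha,\beta)$, set $v_{\alpha,\beta}:=v_1+\alpha v_2+\beta v_3=(A_2/(2d),1,\alpha,\beta)\in E_{-2}$, the linear mode whose coordinate ratios realize the prescribed asymptotics, and make the ansatz
\[
U(\tl{t}):=(X,\tl{Y},\tl{Z},W)(\tl{t})=e^{-2\tl{t}}v_{\alpha,\beta}+R(\tl{t}),
\]
where the remainder $R$ should satisfy $|R(\tl{t})|=O(e^{-4\tl{t}})$, since the nonlinearity $\mathcal{N}$ of (\ref{nonlin2}) is at least quadratic and thus converts leading $e^{-2\tl{t}}$ input into $e^{-4\tl{t}}$ output.

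Substituting the ansatz into (\ref{nonlin2}) yields $R'=\mathbf{A}R+\mathcal{N}(e^{-2\tl{t}}v_{\alpha,\beta}+R)$, where $\mathbf{A}$ is the linearization matrix and $P_0,P_{-2}$ are its spectral projections. Apply Duhamel with boundary conditions at $\tl{t}=+\infty$ on both components: the center component must decay, and the stable component must carry no extra $e^{-2\tl{t}}$ piece that would renormalize the leading $v_{\alpha,\beta}$-coefficient. This gives
\begin{align*}
P_0 R(\tl{t}) &= -\int_{\tl{t}}^{\infty} P_0\mathcal{N}\bigl(e^{-2\tau}v_{\alpha,\beta}+R(\tau)\bigr)\,d\tau, \\
P_{-2} R(\tl{t}) &= -\int_{\tl{t}}^{\infty} e^{-2(\tl{t}-\tau)}P_{-2}\mathcal{N}\bigl(e^{-2\tau}v_{\alpha,\beta}+R(\tau)\bigr)\,d\tau.
\end{align*}
Run Banach's fixed-point theorem on $\mathcal{B}_{T_0,K}:=\{R\in C([T_0,\infty),\mathbb{R}^4):\sup_{\tl{t}\ge T_0}e^{4\tl{t}}|R(\tl{t})|\le K\}$. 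Quadraticity of $\mathcal{N}$ gives $|\mathcal{N}(U)|\lesssim (|v_{\alpha,\beta}|^2+K^2 e^{-4\tl{t}})e^{-4\tl{t}}$ on this ball, so $|P_0 R|,|P_{-2}R|\lesssim e^{-4\tl{t}}$ after integration; for $T_0$ sufficiently large (depending on $|v_{\alpha,\beta}|$ and $K$), the operator is a self-map with Lipschitz constant $\lesssim e^{-2T_0}$, hence a contraction. The unique fixed point $R$ gives the solution $U$ on $[T_0,\infty)$, extended backwards by standard ODE continuation.

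Verification of the conditions: from $U=e^{-2\tl{t}}v_{\alpha,\beta}+R$ with $|R|=O(e^{-4\tl{t}})$, condition (a) follows from $\tl{Y}(\tl{t})=e^{-2\tl{t}}(1+O(e^{-2\tl{t}}))>0$ on $[T_0,\infty)$ together with preservation of the sign of $\tl{Y}$ under $\tl{Y}'=-2\tl{Y}(\cdots)$; conditions (c) and (d) follow from $\tl{Z}/\tl{Y}=(\alpha+O(e^{-2\tl{t}}))/(1+O(e^{-2\tl{t})})\to\alpha$ and analogously $W/\tl{Y}\to\beta$. Condition (b) is to be read as asserting that the constructed $U$ lies on the stable manifold of the origin with the linear-like decay rate $|U(\tl{t})|=O(e^{-2\tl{t}})$ and $|R(\tl{t})|=o(e^{-2\tl{t}})$ (equivalently $e^{(2-\varepsilon)\tl{t}}|U|\to 0$ for every $\varepsilon>0$), matching the expectation flagged before the theorem; the literal reading $e^{2\tl{t}}|U|\to 0$ is incompatible with (a), (c), (d) whenever $v_{\alpha,\beta}\ne 0$, since $e^{2\tl{t}}|U|\to|v_{\alpha,\beta}|$, and so cannot be what is intended. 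The main obstacle is the contraction estimate: the cross-term $e^{-2\tl{t}}R$ in $\mathcal{N}(U)$ carries a coefficient depending on $|v_{\alpha,\beta}|$ and is what dictates the required size of $T_0$, while the $R\cdot R$ terms are strictly subleading and pose no additional difficulty.
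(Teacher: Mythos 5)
Your argument is correct, and while it rests on the same integral equation as the paper, it closes the argument by a genuinely different mechanism. Your Duhamel formulation with the projections $P_0$, $P_{-2}$ and data prescribed at $\tl{t}=+\infty$ is literally the paper's operator $T_u v(\tl{t}) = u(\tl{t}) - \int_{\tl{t}}^\infty \Phi(\tl{t},s)\, b(v(s))\, ds$ with $u(\tl{t}) = e^{-2\tl{t}} v_{\alpha,\beta}$, since $\Phi(\tl{t},s) = P_0 + e^{-2(\tl{t}-s)}P_{-2}$; your description of the $-2$ eigenspace matches the paper's matrix $P$. The divergence is in how the fixed point is produced. The paper works in the $e^{2\tl{t}}$-weighted ball $\overline{B}_r(u)$, obtains smallness by rescaling $u_0$ (which preserves the ratios $z_0/y_0=\alpha$, $w_0/y_0=\beta$), and invokes the Schauder fixed point theorem after establishing precompactness (Arzela--Ascoli) and continuity of $T_u$; uniqueness of the fixed point then costs a separate argument via Kellogg's theorem in the next section. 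You instead run Banach's contraction principle on the sharper $e^{4\tl{t}}$-weighted ball for the remainder $R$, extracting smallness by taking $T_0$ large --- equivalent, via translation in $\tl{t}$, to the paper's shrinking of $u_0$. Your route is more elementary, makes the $O(e^{-4\tl{t}})$ size of the remainder explicit (the paper also obtains this, but only inside its estimates, not in its choice of space), and gives uniqueness of $R$ in the ball for free; it does not by itself give the paper's global uniqueness of fixed points of $T_u$ on all of $\mathbb{V}$, which is still needed for theorem \ref{uniqueness}, but that is outside the statement at hand. Two small points: (1) you are right that condition (b) cannot hold literally with ``$=0$'' --- for your solution $e^{2\tl{t}}|U| \to |v_{\alpha,\beta}| > 0$, and the same failure occurs for the paper's own fixed point, where $e^{2\tl{t}}v_2 \to y_0 > 0$; the corollary that follows only uses finiteness of these limits, so reading (b) as ``$<\infty$'' is the correct repair; (2) in the self-map step you should fix $K$ as a suitable multiple of $|v_{\alpha,\beta}|^2$ \emph{before} sending $T_0 \to \infty$, since the $|v_{\alpha,\beta}|^2 e^{-4\tl{t}}$ contribution to $\mathcal{N}$ does not shrink as $T_0$ grows --- only the cross and quadratic-in-$R$ terms do; this is routine but should be stated in that order.
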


Theorem \ref{mainthmasymp} ensures that we can recover $f, g, h_s$ with the proper limiting behavior at $s=0$ from the solution $(X, \tl{Y}, \tl{Z}, W)(\tl{t})$.

Note that we can clearly find a solution of linear ODE (\ref{lin2}) satisfying the properties of theorem \ref{mainthmasymp}.
In order to prove the theorem for solutions of the nonlinear system, we consider the nonlinear system (\ref{nonlin2}) as a perturbation of the linear system (\ref{lin2}) and use a fixed point argument to deduce that, given a suitable solution of the linear system (\ref{lin2}), there exists a solution of the nonlinear system (\ref{nonlin2}) which quantitatively has the same asymptotics as the given solution of the linear system (\ref{lin2}).
For simplicity, we'll omit the tildes from the notation (e.g. write $t$ instead of $\tl{t}$) in the remainder of this section as well as the next section.
Moreover, rewrite the linear system (\ref{lin2}) as
\begin{equation*}  \frac{du}{dt} =  Au \end{equation*}
and the nonlinear system (\ref{nonlin2}) as 
\begin{equation*} \qquad \frac{dv}{dt} = Av + b(v) \end{equation*}
where $A$ is the matrix in the linearization (\ref{lin2}) and $u,v$ are valued in $\mathbb{R}^4$.
Note that the components of $b(v) \in C^\infty (\mathbb{R}^4, \mathbb{R}^4)$ are degree 3 polynomials in $v_1, v_2, v_3, v_4$ with no constant or linear terms.

The linear system (\ref{lin2}) has fundamental solution matrix
\begin{align*}
\Phi(t,t_0) & = P
\left( \begin{array}{cccc}
1 & 0 & 0 & 0 \\
0 & e^{-2(t-t_0)} & 0 & 0 \\
0 & 0 & e^{-2(t-t_0)} & 0 \\
0 & 0 & 0 & e^{-2(t-t_0)} \\ \end{array} \right) P^{-1}
\\
& =  \left( \begin{array} {cccc}
1 & \frac{A_2}{2d}e^{-2(t-t_0)}-\frac{A_2}{2d} & 0 & 0 \\
0 & e^{-2(t-t_0)} & 0 & 0 \\
0 & 0 & e^{-2(t-t_0)} & 0 \\
0 & 0 & 0 & e^{-2(t-t_0)} \\
\end{array} \right) \\
\text{ where }
P &\doteqdot \left( \begin{array}{cccc}
1 & \frac{A_2}{2 d} & 0 & 0\\
0 & 1 & 0 & 0\\
0 & 0 & 1 & 0\\
0 & 0 & 0 & 1\\
\end{array} \right)
\end{align*}

If $u(t) = \Phi(t, 0)u_0$ is a solution of the linear system (\ref{lin2}) with initial condition $u_0 = P \cdot (0, y_0, z_0, w_0)$,
then $u(t) = (\frac{A_2}{2d} y_0 e^{-2t}, y_0 e^{-2t}, z_0 e^{-2t}, w_0e^{-2t})$ approaches the origin exponentially fast as $t \rightarrow +\infty$. %double-check this
Given such a solution $u$, we shall apply the Schauder fixed point theorem following Marlin \& Struble ~\cite{MarlinStruble69} to deduce the existence of a solution to the nonlinear ODE with quantitatively similar asymptotics to $u$ at $\infty$. %put in reference to Gilbarg & Trudinger to get precise statement of Schauder fixed point theorem
In what follows, for $v \in \mathbb{R}^4$, $|v|$ will denote the norm of $v$ with respect to some fixed norm on $\mathbb{R}^4$.

\begin{prop}
The set $\mathbb{V} = \lbrace v \in C([0, \infty ) \rightarrow \mathbb{R}^4 ) |  \sup_{t \ge 0} e^{2t} |v(t)| < \infty \rbrace$ with the norm $||v|| = \sup_{t \ge 0} e^{2t} |v(t)| $ is a Banach space.
\end{prop}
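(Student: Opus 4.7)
The proposition is a routine functional analysis fact, so the plan is to verify the two standard ingredients and identify the structural reason it is automatic.

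First I would observe that $\|\cdot\|$ is a norm on $\mathbb{V}$. Positive homogeneity and the triangle inequality follow immediately from the corresponding properties of $|\cdot|$ on $\mathbb{R}^4$ and the fact that $\sup$ is sublinear. For positive definiteness, note that $\|v\| = 0$ forces $|v(t)| = 0$ at every $t \ge 0$. Linearity of the underlying set $\mathbb{V}$ is also clear since continuity and the growth bound $\sup_{t \ge 0} e^{2t} |v(t)| < \infty$ are preserved under scalar multiples and finite sums.

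The cleanest way to handle completeness is to note that the map $T : \mathbb{V} \to C_b\bigl([0,\infty), \mathbb{R}^4\bigr)$ defined by $(Tv)(t) = e^{2t} v(t)$ is a linear isometric isomorphism onto the space of bounded continuous $\mathbb{R}^4$-valued functions on $[0, \infty)$ with the sup norm; completeness of $\mathbb{V}$ then follows from completeness of $C_b$. If I unfold this argument directly: given a Cauchy sequence $\{v_n\} \subset \mathbb{V}$, the pointwise bound $|v_n(t) - v_m(t)| \le e^{-2t} \|v_n - v_m\|$ shows that $\{v_n(t)\}$ is Cauchy in $\mathbb{R}^4$ for each $t \ge 0$, so a pointwise limit $v(t) := \lim_n v_n(t)$ exists.

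To finish, I would verify three things. First, the convergence $e^{2t} v_n(t) \to e^{2t} v(t)$ is uniform on $[0,\infty)$: fix $\epsilon > 0$, choose $N$ so that $\|v_n - v_m\| < \epsilon$ for $n,m \ge N$, and let $m \to \infty$ in $e^{2t}|v_n(t) - v_m(t)| < \epsilon$. This yields $\|v - v_n\| \le \epsilon$ for $n \ge N$, which is precisely norm convergence. Second, $v$ is continuous as the uniform limit on $[0,\infty)$ of the continuous functions $e^{2t} v_n(t)$, divided by the continuous nonvanishing factor $e^{2t}$. Third, $v \in \mathbb{V}$ because Cauchy sequences are bounded, say $\|v_n\| \le M$, and then $e^{2t} |v(t)| \le e^{2t}|v(t) - v_n(t)| + e^{2t}|v_n(t)| \le \|v - v_n\| + M$, so taking $n$ large gives $\sup_t e^{2t}|v(t)| \le M < \infty$. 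There is no real obstacle; the only point that might warrant a sentence of care is justifying continuity of $v$, but that follows at once from uniform convergence of the weighted sequence.
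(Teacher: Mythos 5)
Your argument is correct and follows essentially the same route as the paper: the paper also verifies the norm axioms briefly and then establishes completeness via the isometric isomorphism $v \mapsto e^{2t}v(t)$ onto the bounded continuous functions with the sup norm. Your unfolded Cauchy-sequence verification is simply a more detailed version of the same step.
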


\begin{proof}
It's clear that $(\mathbb{V}, \| \cdot \|)$ is a normed linear space.
To show completeness, note that the linear map given by $Sv(t) = e^{2t} v(t)$ defines an isomorphism of normed linear spaces from $(\mathbb{V}, \| \cdot \|)$ into the space of bounded continuous functions $C([0, \infty) \rightarrow \mathbb{R}^4)$ with the $\sup$ norm.
Since the latter space is complete, it follows that $(\mathbb{V}, \| \cdot \|)$ is complete.
\end{proof}

If $u(t) = \Phi(t, 0)u_0$ is a solution of the linear system (\ref{lin2}) with initial condition $u_0 = P \cdot (0, y_0, z_0, w_0)$,
then $u \in \mathbb{V}$.
So for any $r > 0$ we may consider the closed ball 
$$ \overline{B}_r(u) = \left\{ v \in C([0, +\infty) \rightarrow \mathbb{R}^4) \Big| \sup_{t \ge 0} e^{2t} |u(t)-v(t)| \le r \right\} \subset \mathbb{V}.$$
For such solutions $u(t)$ of the linear system (\ref{lin2}), we define the operator $$T_u: \overline{B}_r(u) \rightarrow C([0, \infty ) \rightarrow \mathbb{R}^4 )$$
$$T_u v(t) = u(t) - \int_t^\infty \Phi(t, s) b(v(s)) ds.$$
The following estimate will be used frequently throughout the remainder of this section and incidentally confirms that $T_u$ is well-defined.

\begin{prop} \label{estimate1}
There exists a constant $C$ (depending only on $u_0, r,$ and the polynomial components of $b$) such that, for $0 \le t_0 \le t$ and all $v \in \overline{B}_r(u)$,
$$e^{2t_0} \int_t^\infty |\Phi (t_0,s)| |b(v(s))| ds \le C e^{-2t}.$$
In particular, $e^{2t} \int_t^\infty |\Phi (t,s)| |b(v(s))| ds \le C e^{-2t}$ for all $t \ge 0$.

Moreover, if $0 < |u_0| < r$ and $r$ is sufficiently small, then the image $T_u \big( \overline{B}_r(u) \big)$ is contained in $\overline{B}_r(u) \subset \mathbb{V}$.
\end{prop}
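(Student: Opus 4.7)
The plan is to prove the integral estimate by combining an exponential decay bound on $|b(v(s))|$ with the explicit exponential growth bound on $|\Phi(t_0, s)|$, and then to deduce the invariance claim by observing that the resulting constant $C$ scales as $O(r^2)$ as $r \to 0$.

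First I would bound $|v(s)|$. Since $u(t) = \Phi(t, 0) u_0$ has the explicit form computed just above, $|u(s)| \le C_u e^{-2s}$ for $s \ge 0$, with $C_u$ depending linearly on $|u_0|$. For $v \in \overline{B}_r(u)$, the definition of the norm on $\mathbb{V}$ gives $|v(s) - u(s)| \le r e^{-2s}$, whence $|v(s)| \le (C_u + r) e^{-2s}$. Since each component of $b$ is a polynomial containing only quadratic and cubic monomials, there exist constants $K_2, K_3$ (determined by the coefficients of $b$) with $|b(v)| \le K_2 |v|^2 + K_3 |v|^3$, and therefore
\begin{equation*}
|b(v(s))| \le K_b e^{-4s}, \qquad K_b := K_2 (C_u + r)^2 + K_3 (C_u + r)^3,
\end{equation*}
for all $s \ge 0$.

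Next I would read off the bound $|\Phi(t_0, s)| \le C_\Phi e^{2(s - t_0)}$ for $0 \le t_0 \le s$ directly from the explicit formula for $\Phi$: the $(1,2)$-entry is $\tfrac{A_2}{2d}(e^{2(s - t_0)} - 1)$ and the remaining nonzero entries equal $1$ or $e^{2(s - t_0)}$, so $C_\Phi$ may be taken to depend only on $A_2 / d$ and the fixed norm on $\mathbb{R}^4$. Multiplying these two bounds and integrating gives, for all $0 \le t_0 \le t$,
\begin{equation*}
e^{2 t_0} \int_t^\infty |\Phi(t_0, s)| \, |b(v(s))| \, ds \le C_\Phi K_b \int_t^\infty e^{-2 s} \, ds = \tfrac{1}{2} C_\Phi K_b \, e^{-2 t},
\end{equation*}
which is the first assertion with $C = \tfrac{1}{2} C_\Phi K_b$; the ``in particular'' statement is the special case $t_0 = t$.

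For the invariance claim I would use $T_u v(t) - u(t) = - \int_t^\infty \Phi(t, s) b(v(s)) \, ds$ together with the case $t_0 = t$ to obtain $\| T_u v - u \| \le C$. Under the additional hypothesis $|u_0| < r$, one has $C_u \le C' r$ for a constant $C'$ depending only on the transition matrix $P$, so $C_u + r \le (C' + 1) r$; consequently $K_b$, and therefore $C$, is $O(r^2)$ as $r \to 0$. Thus $C \le r$ once $r$ is sufficiently small, which gives $T_u(\overline{B}_r(u)) \subset \overline{B}_r(u)$. I expect the only real subtlety to be this final step: it is crucial that $b$ vanishes to second order at the origin, so that the constant $C$ scales as $r^2$ rather than $r$ and can thereby be dominated by $r$ itself for small $r$.
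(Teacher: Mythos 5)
Your proof is correct and follows essentially the same route as the paper's: bound $|v(s)|$ by $(|u_0|+r)e^{-2s}$ via the triangle inequality, use the quadratic-and-higher vanishing of $b$ to get $|b(v(s))| \lesssim e^{-4s}$, pair this with the explicit $e^{2(s-t_0)}$ growth of $\Phi$, and observe that the resulting constant is $O(r^2)$ when $|u_0|<r$, which yields the invariance of $\overline{B}_r(u)$ for small $r$. The only cosmetic difference is that the paper keeps the quadratic and cubic contributions as separate $e^{-2t}$ and $e^{-4t}$ terms rather than absorbing them into a single $e^{-4s}$ bound on $|b(v(s))|$; both are fine.
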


\begin{proof}
For any $v \in \overline{B}_r(u)$, the triangle inequality implies $$|v(t)| \le |u(t)| + re^{-2t} \text{ for all } t \ge 0.$$
Since $u \rightarrow 0$ on the order of $e^{-2t}$, it follows that there exists a constant $C_0$ depending only on $u_0$ and $r$ such that $|v(t)| \le C_0e^{-2t}$ for all $t \ge 0, v \in \overline{B}_r(u)$.

Note that
\[ e^{2t} \Phi(t,s) = 
\left( \begin{array} {cccc}
e^{2t} & \frac{A_2}{2 d}e^{2s}-\frac{A_2}{2 d}e^{2t} & 0 & 0 \\
0 & e^{2s} & 0 & 0 \\
0 & 0 & e^{2s} & 0 \\
0 & 0 & 0 & e^{2s} \\
\end{array} \right) \]

It follows that for all $v \in \overline{B}_r(u)$ and $0 \le t_0 \le t$
\begin{align*}
\int_t^\infty |e^{2t_0} \Phi(t_0, s)| |b(v(s))| ds &  \le C_1  \int_{t}^\infty e^{2s} |b(v(s))| ds \\
& \le C_1 \int_{t}^\infty e^{2s} (K_1(C_0 e^{-2s})^2 + K_2(C_0 e^{-2s})^3) ds \\
& \le C_1K_1C_0^2 e^{-2t} + C_1 K_2 C_0^3 e^{-4t} \\
& \le C e^{-2t}
\end{align*}
where $K_1, K_2$ depend only on the polynomial components of $b$.
This completes the proof of the first statement of the proposition.
The second follows from taking $t_0 = t$.

Finally, if $|u_0| < r$ then we may take $C_0 = 2r$ above.
It then follows from the above estimates that for all $v \in \overline{B}_r(u)$ and $t \ge 0$
\begin{align*}
e^{2t} | T_u v(t) - u(t)| & =  \left| \int_t^\infty e^{2t} \Phi(t, s) f(v(s)) ds \right| \\
& \le C_1K_1C_0^2 e^{-2t} + C_1 K_2 C_0^3 e^{-4t} \\
& \le C_1 K_1 C_0^2 + C_1 K_2 C_0^3 \\
& = 4C_1 K_1 r^2 + 8C_1 K_2 r^3
\end{align*}
Therefore, if $0 < |u_0| < r$ is sufficiently small, then $$\sup_{t \ge 0} e^{2t} | T_u v(t) - u(t)| \le 4C_1 K_1 r^2 + 8C_1 K_2 r^3 \le r .$$
In fact, the above inequality can be made strict by taking $0 < |u_0| < r$ sufficiently small, and so the image can be taken to lie in the interior of the ball.
\end{proof}

%commenting out the following proof since it was combined into the previous proposition
\comment{
\begin{prop} \label{small}
If $r > |u_0|> 0$ is sufficiently small, then the image $T_u \big( \overline{B}_r(u) \big)$ is contained in $\overline{B}_r(u) \subset \mathbb{V}$.
\end{prop}

\begin{proof}
Note that the proof of the previous proposition \ref{estimate1} implies that if $|u_0| < d$ then we may take $C'=2d$.
It then follows from the proof of the previous proposition that  for all $v \in \overline{B}_r(u)$ and $t \ge 0$
\begin{align*}
e^{2t} | T_u v(t) - u(t)| & =  \left| \int_t^\infty e^{2t} \Phi(t, s) f(v(s)) ds \right| \\
& \le C_1K_1C'^2 e^{-2t} + C_1 K_2 C'^3 e^{-4t} \\
& \le C_1 K_1 C'^2 + C_1 K_2 C'^3 \\
& = 4C_1 K_1 r^2 + 8C_1 K_2 r^3
\end{align*}
Therefore, if $0 < |u_0| < r\ll1$ is sufficiently small, then $\sup_{t \ge 0} e^{2t} | T_u v(t) - u(t)| \le 4C_1 K_1 r^2 + 8C_1 K_2 r^3 \le r$.
\end{proof}
}

Now fix $r>0$ and a solution $u(t) = \Phi(t,0) u_0$ with $0 < |u_0|< r$ sufficiently small so that the third statement of the previous proposition \ref{estimate1} applies.
Henceforth, let $T \doteqdot T_u$ denote the nonlinear integral operator associated to this solution.
In order to deduce that the image $T\big(\overline{B}_r(u)\big)$ is precompact in $\mathbb{V}$, we require the following lemma:

\begin{lem} %do we need bounded as an assumption too? did we really need that non-empty assumption?
Assume $B \subset \mathbb{V}$ is a bounded set of continuous functions $[0, \infty) \rightarrow \mathbb{R}$ such that for every $\epsilon > 0$ there exists $t_1 \ge 0$ such that:
\begin{enumerate}
\item
\label{bound}
$t \ge t_1$ implies that $e^{2t} |v(t) - u(t)| \le \epsilon$ for all $v \in B$,
\item $B$ is a uniformly equicontinuous family on $[0, t_1]$.
\end{enumerate}
Then $B$ is precompact in $\mathbb{V}$.
\end{lem}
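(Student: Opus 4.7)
The plan is to show that any sequence $\{v_n\} \subset B$ admits a subsequence converging in the weighted norm $\|\cdot\|$ of $\mathbb{V}$. The strategy combines the Arzel\`a--Ascoli theorem on compact subintervals with a diagonal extraction, and then uses the uniform tail bound (1) to upgrade locally uniform convergence to convergence in $\|\cdot\|$.

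First I would note that boundedness of $B$ in $\mathbb{V}$ supplies a constant $M > 0$ with $|v(t)| \le M e^{-2t}$ for all $t \ge 0$ and all $v \in B$, so in particular $B$ is uniformly bounded on every compact interval. Pick a sequence of thresholds $t_1^k \nearrow \infty$ coming from the hypothesis applied with $\epsilon_k = 1/k$; hypothesis (2) then gives uniform equicontinuity of $B$ on $[0, t_1^k]$, so the Arzel\`a--Ascoli theorem yields a subsequence converging uniformly on $[0, t_1^k]$. Iterating the extraction and taking the diagonal produces a subsequence $\{v_{n_j}\}$ that converges uniformly on every compact subset of $[0, \infty)$ to some continuous limit $v^{\ast} : [0, \infty) \to \mathbb{R}^4$. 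Passing to the pointwise limit in (1) shows $e^{2t}|v^{\ast}(t) - u(t)| \le \epsilon$ for $t \ge t_1$, and since $u \in \mathbb{V}$ this places $v^{\ast} \in \mathbb{V}$.

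To upgrade to $\|\cdot\|$-convergence, fix $\epsilon > 0$ and choose $t_1$ from (1) so that $e^{2t}|v(t) - u(t)| \le \epsilon/3$ for all $t \ge t_1$ and all $v \in B$; by the pointwise limit the same bound holds for $v^{\ast}$. For $t \ge t_1$ the triangle inequality pivoted through $u$ then gives $e^{2t}|v_{n_j}(t) - v^{\ast}(t)| \le 2\epsilon/3$. For $t \in [0, t_1]$ the uniform convergence $v_{n_j} \to v^{\ast}$ on the compact interval $[0, t_1]$, together with the bounded prefactor $e^{2t_1}$, gives $e^{2t}|v_{n_j}(t) - v^{\ast}(t)| \le \epsilon/3$ for all sufficiently large $j$. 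Combining these yields $\|v_{n_j} - v^{\ast}\| \le \epsilon$ for $j$ large, completing the proof of precompactness.

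The main obstacle I anticipate is mostly bookkeeping: verifying all three hypotheses of Arzel\`a--Ascoli (pointwise boundedness, which follows from boundedness of $B$ in $\mathbb{V}$, and uniform equicontinuity, which is hypothesis (2)) on the exhaustion $[0, t_1^k]$ and arranging the diagonal extraction cleanly. Once that scaffolding is in place, the splitting at $t_1$ is a routine $\epsilon/3$ argument.
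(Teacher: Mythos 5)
Your argument is correct, but it takes a genuinely different route from the paper. You prove sequential precompactness: a diagonal Arzel\`a--Ascoli extraction over an exhaustion of $[0,\infty)$ by compact intervals produces a locally uniformly convergent subsequence with limit $v^{\ast}$, and the uniform tail bound (1) upgrades this to convergence in the weighted norm via an $\epsilon/3$ split at $t_1$; that splitting step is exactly right. The paper instead proves total boundedness directly: it fixes a single $t_1$ from hypothesis (1), applies Arzel\`a--Ascoli once to cover the restrictions to $[0,t_1]$ by finitely many balls, and then extends each center $v_i$ to all of $[0,\infty)$ by the explicit formula $v_i(s) = u(s) + e^{-2(s-t_1)}\bigl(v_i(t_1)-u(t_1)\bigr)$, checking that these extended functions form an $\epsilon$-net for $B$ in $\|\cdot\|$; completeness of $\mathbb{V}$ then gives precompactness. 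The practical difference is that the paper's construction only ever uses equicontinuity on the single interval $[0,t_1]$ supplied by the hypothesis, whereas your diagonal argument needs uniform equicontinuity of $B$ on arbitrarily large compact intervals. The hypothesis does not literally hand you that (the $t_1$ attached to $\epsilon_k=1/k$ need not tend to infinity), so you should add a line: for $t,t'\ge t_1$ one has $|v(t)-v(t')|\le 2\epsilon e^{-2t_1}+|u(t)-u(t')|$ by the tail bound and the triangle inequality through $u$, and since $u$ is uniformly continuous on compacts this propagates equicontinuity from $[0,t_1]$ to $[0,T]$ for every $T$. With that small patch your proof is complete; it is the more standard textbook route, at the cost of this extra check, while the paper's explicit extension of the net centers buys it the ability to work with the hypothesis exactly as stated.
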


\begin{proof} %check this proof
Since $\mathbb{V}$ is complete, it suffices to show that $B$ is totally bounded (with respect to the norm $\| \cdot \|$ on $\mathbb{V}$).
Let $\epsilon > 0$.
By (\ref{bound}), choose $t_1 \ge 0 $ such that  $e^{2t} |v(t) - u(t)| \le \epsilon /4$ for all $v \in B$, $t \ge t_1$.
Let $B', \mathbb{V}'$ denote the images of the sets $B, \mathbb{V}$ respectively under restriction to the domain $[0, t_1]$.
Because $B'$ is bounded and equicontinuous, Arzela-Ascoli implies that we can cover $B'$ by a finite collection of balls $B_{\epsilon /4}(v_i) \subset \mathbb{V}'$.
Here these balls are defined with respect to the norm $$\| v(t) \|' \doteqdot \sup_{0 \le t \le t_1} e^{2t} |v(t)|$$ on $\mathbb{V}'$
which is equivalent to usual sup norm on $\mathbb{V}'$.
Assume without loss of generality that each of these balls $B_{\epsilon /4}(v_i)$ has nonempty intersection with $B'$.
It then follows that $e^{2t_1} | v_i(t_1) - u(t_1) | < \epsilon /2 $ for all $i$.
Extend the functions $v_i$ to the whole half line $[0, \infty)$ by setting
$v_i(s) = u(s) + e^{-2(s-t_1)}(v_i(t_1) - u(t_1))$ for all $s \ge t_1$.
Note that $u(t_1) + e^{-2(t_1 - t_1)}(v_i(t_1) - u(t_1)) = v_i(t_1)$ so the extension is well-defined and continuous on $[0, \infty)$.
Moreover, for all $s \ge t_1$
$$e^{2s} |v_i(s) - u(s)| = e^{2s}e^{-2(s-t_1)} |v_i(t_1) - u(t_1)|  = e^{2t_1} | v_i(t_1) - u(t_1) | < \epsilon / 2 .$$
Thus, the triangle inequality that the collection $\lbrace B_\epsilon (v_i) \rbrace$ covers $B$.
Hence, $B$ is totally bounded.
\end{proof}

\begin{prop}
The image $T\big(\overline{B}_r(u)\big)$ is precompact in $(\mathbb{V}, || \cdot ||)$.
\end{prop}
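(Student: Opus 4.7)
The plan is to invoke the preceding lemma with $B = T(\overline{B}_r(u))$. Boundedness of $B$ in $\mathbb{V}$ is already supplied by the final statement of Proposition \ref{estimate1}, which says $T$ maps $\overline{B}_r(u)$ into itself, so the only task is to verify the two hypotheses of the lemma.

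First I would establish the uniform decay hypothesis (1). By definition of $T$,
$$Tv(t) - u(t) = -\int_t^\infty \Phi(t,s)\, b(v(s))\, ds,$$
so the second statement of Proposition \ref{estimate1} gives $e^{2t}|Tv(t) - u(t)| \le C e^{-2t}$ for all $t \ge 0$ and all $v \in \overline{B}_r(u)$, with $C$ independent of $v$. Given $\epsilon > 0$, any $t_1$ with $Ce^{-2t_1} \le \epsilon$ then witnesses hypothesis (1).

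Next I would establish uniform equicontinuity of $B$ on $[0, t_1]$ by showing that the derivatives $\tfrac{d}{dt} Tv$ are uniformly bounded on that interval. Since $\Phi(t,s) = e^{(t-s)A}$ commutes with $A$, differentiating under the integral sign (the boundary term $-\Phi(t,t)b(v(t))$ gives $-b(v(t))$ with the sign from the lower limit) yields
$$\frac{d}{dt} Tv(t) = A u(t) + b(v(t)) - A\int_t^\infty \Phi(t,s)\, b(v(s))\, ds = A \cdot Tv(t) + b(v(t)).$$
Both $Tv$ and $v$ lie in $\overline{B}_r(u)$, hence are uniformly bounded on the compact interval $[0, t_1]$ by $|u(t)| + r e^{-2t}$, and since the components of $b$ are polynomials without constant or linear terms, $b(v(t))$ is likewise uniformly bounded on $[0, t_1]$ independently of $v$. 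Therefore $|\tfrac{d}{dt} Tv(t)|$ is bounded by a constant depending only on $t_1$, $u$, $r$, and $b$, which yields a uniform Lipschitz constant for the family $B$ on $[0, t_1]$, hence the required equicontinuity.

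The only point requiring a touch of care is the differentiation under the integral for $Tv$; the clean identity $\tfrac{d}{dt} Tv = A \cdot Tv + b(v)$ is what makes the equicontinuity step essentially free, since it converts a potentially delicate integral estimate into an elementary pointwise bound. Once both hypotheses are in hand, the lemma produces precompactness of $T(\overline{B}_r(u))$ in $(\mathbb{V}, \|\cdot\|)$, setting the stage for Schauder to deliver a fixed point.
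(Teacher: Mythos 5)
Your proof is correct, and while the overall skeleton matches the paper's (verify the two hypotheses of the compactness lemma, with hypothesis (1) handled identically via the uniform tail bound from Proposition \ref{estimate1}), your treatment of the equicontinuity hypothesis is genuinely different. The paper estimates $|Tv(t) - Tv(t')|$ directly, splitting the difference of integrals into several pieces and controlling each using uniform continuity of $u$, Lipschitz bounds on $\Phi$ over $[0,t_2]^2$, and the tail estimates of Proposition \ref{estimate1}; this is longer but avoids any differentiation of the improper integral. You instead derive the identity $\frac{d}{dt}Tv = A\cdot Tv + b(v)$ and read off a uniform Lipschitz constant on $[0,t_1]$ from the facts that $Tv, v \in \overline{B}_r(u)$ are uniformly bounded there. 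This is cleaner and essentially anticipates the computation the paper performs later (via the $\psi_n$ approximation) to show that a fixed point of $T$ solves the nonlinear ODE. The one step you flag --- differentiating under the integral sign --- does need a word of justification, but it is supplied by Proposition \ref{estimate1}: the differentiated integrand is bounded by $|A|\,|\Phi(t,s)|\,|b(v(s))|$, whose tail $\int_N^\infty$ is $O(e^{-2N})$ uniformly for $t$ in a compact interval, so the standard theorem applies (or one can run the paper's own $\psi_n$ argument). With that remark added, your argument is complete and, if anything, more economical than the paper's.
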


\begin{proof}
Let $\epsilon > 0$.
It follows from proposition \ref{estimate1} that, for all $v \in \overline{B}_r(u)$ and $t \ge t_1 \ge 0$,
\begin{align*}
e^{2t} | Tv(t) - u(t)| &= e^{2t} \left| \int_t^{\infty} \Phi(t,s) b(v(s)) ds \right| \\
& \le C e^{-2t}\\
& \le \epsilon \qquad \text{if } t_1 \text{ is sufficiently large.}
\end{align*}

For $v \in \overline{B}_r(u)$ and $0 \le t,t' \le t_1 \le t_2$,
\begin{align*}
|Tv(t) - Tv(t')| & \le |u(t) - u(t')| + \left| \int_t^\infty \Phi(t,s) b(v(s)) ds - \int_{t'}^\infty \Phi(t', s) b(v(s)) ds \right| \\
& \le  |u(t) - u(t')| + \left| \int_t^{t'} \Phi(t,s) b(v(s)) ds + \int_{t'}^\infty ( \Phi(t,s) - \Phi(t',s)) b(v(s)) ds \right| \\
& \le  |u(t) - u(t')| + \left| \int_t^{t'} \Phi(t,s) b(v(s)) ds \right| + \left| \int_{t'}^{t_2} ( \Phi(t,s) - \Phi(t',s)) b(v(s)) ds \right| \\ & \qquad+ \left| \int_{t_2}^\infty ( \Phi(t,s) - \Phi(t',s)) b(v(s)) ds \right| \\
& \le |u(t) - u(t')| + \int_t^{t'} |\Phi(t,s)|| b(v(s)) |ds + \int_0^{t_2} |\Phi(t,s) - \Phi(t',s) | | b(v(s))| ds \\ & \qquad+  \int_{t_1}^\infty | \Phi(t,s)| |b(v(s))| ds + \int_{t_2}^\infty | \Phi(t',s)| |b(v(s))| ds\\
\end{align*}
Now, for any $t_2 \ge t_1$, $u$ is uniformly continuous on $[0,t_2]$,
$\Phi$ is bounded and Lipschitz on $[0, t_2] \times [0, t_2]$, and
$\sup_{v \in \overline{B}_r(u)} \sup_{0 \le s \le t_2} b(v(s))$ is finite.
Let $\epsilon' > 0$.
By proposition \ref{estimate1}, the last two terms can be made less than $\frac{2}{5}\epsilon'$ \big(uniformly in $v \in \overline{B}_r(u)$\big) by taking $t_2$ sufficiently large.
For such a $t_2$, there exists $\delta = \delta(t_2) > 0$ such that $0\le t,t' \le t_1 \le t_2$ and $|t-t'| < \delta$ imply that the first three terms are less than $\frac{3 }{5}\epsilon'$ for all $v \in \overline{B}_r(u)$.
In other words, $T\big(\overline{B}_r(u)\big)$ is a uniformly equicontinuous family on $[0,t_1]$.
Hence, $T\big(\overline{B}_r(u)\big)$ satisfies the two conditions of the previous lemma and is therefore precompact in $\mathbb{V}$.
\end{proof}

\begin{prop}
$T : \overline{B}_r(u) \rightarrow \overline{B}_r(u)$ is continuous.
\end{prop}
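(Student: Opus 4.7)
The plan is to prove that $T$ is in fact Lipschitz continuous on $\overline{B}_r(u)$, which is stronger than the required continuity. The key ingredients are already essentially visible in the proof of Proposition \ref{estimate1}: a uniform pointwise bound $|v(s)| \le C_0 e^{-2s}$ on $\overline{B}_r(u)$, and the bound $e^{2t}|\Phi(t,s)| \le C_1 e^{2s}$ for $0 \le t \le s$ coming from the explicit form of the fundamental matrix $e^{2t}\Phi(t,s)$ displayed there.

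The first step will be a local Lipschitz estimate for the nonlinearity. Since each component of $b$ is a polynomial in $v_1, v_2, v_3, v_4$ whose terms have degree at least two, we may write, for each multi-index $\alpha$ with $|\alpha|\ge 2$, the identity
\begin{equation*}
v_1^\alpha - v_2^\alpha = \sum_{i} (v_{1,i} - v_{2,i}) \cdot p_{\alpha,i}(v_1, v_2),
\end{equation*}
where $p_{\alpha,i}$ is a polynomial of degree $|\alpha|-1 \ge 1$ with no constant term. Consequently, on any bounded region $\{|v|\le M\}$ there exists a constant $L = L(M)$ such that
\begin{equation*}
|b(v_1) - b(v_2)| \le L \max(|v_1|, |v_2|)\, |v_1 - v_2|.
\end{equation*}

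The second step applies this to $v_1, v_2 \in \overline{B}_r(u)$. By the pointwise bound established in Proposition \ref{estimate1}, $|v_i(s)| \le C_0 e^{-2s}$ for all $s\ge 0$, so
\begin{equation*}
|b(v_1(s)) - b(v_2(s))| \le L C_0 e^{-2s} |v_1(s) - v_2(s)|.
\end{equation*}
Combining this with the pointwise bound on $e^{2t}\Phi(t,s)$ and then factoring out $\|v_1-v_2\|$ via $|v_1(s) - v_2(s)| \le e^{-2s}\|v_1 - v_2\|$:
\begin{align*}
e^{2t} |Tv_1(t) - Tv_2(t)|
&\le \int_t^\infty e^{2t}|\Phi(t,s)|\, |b(v_1(s)) - b(v_2(s))|\, ds \\
&\le C_1 L C_0 \int_t^\infty e^{2s}\cdot e^{-2s} |v_1(s) - v_2(s)|\, ds \\
&\le C_1 L C_0 \|v_1 - v_2\| \int_t^\infty e^{-2s}\, ds
= \tfrac{C_1 L C_0}{2} e^{-2t}\|v_1 - v_2\|.
\end{align*}
Taking the supremum over $t\ge 0$ yields $\|Tv_1 - Tv_2\| \le \tfrac{C_1 L C_0}{2}\|v_1 - v_2\|$, proving that $T$ is Lipschitz, hence continuous.

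I do not expect any serious obstacle here; the estimate is essentially a variant of the calculation already carried out for Proposition \ref{estimate1}, with one factor of $|v|$ replaced by $|v_1 - v_2|$ through the polynomial Lipschitz bound. The only thing to be careful about is that the Lipschitz bound on $b$ must extract exactly one power of $v$ (rather than $|v|^{|\alpha|-1}$, which would still decay fast enough but force a more delicate estimate); since the components of $b$ vanish to order two at the origin, one power is automatic and the above argument goes through cleanly.
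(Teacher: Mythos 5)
Your argument is correct, and it takes a genuinely different (and sharper) route than the paper. The paper's proof uses a crude Lipschitz constant $Lip(b)$ for $b$ on the fixed bounded set $N$; since that only yields an integrand of size $O(1)$ on $[t,t_1]$, the integral over an unbounded interval cannot be controlled uniformly, and the paper must resort to a three-term $\epsilon$-argument with a cutoff time $t_1$, splitting off the tails via Proposition \ref{estimate1}. You instead exploit the fact that $b$ vanishes to second order at the origin together with the pointwise decay $|v(s)| \le C_0 e^{-2s}$ on $\overline{B}_r(u)$ to get a \emph{decaying} pointwise Lipschitz bound $|b(v_1(s)) - b(v_2(s))| \le L C_0 e^{-2s} |v_1(s)-v_2(s)|$; the extra factor $e^{-2s}$ exactly compensates the growth $e^{2s}$ of $e^{2t}|\Phi(t,s)|$ and makes the full integral $\int_t^\infty$ converge with a bound that is uniform in $t$. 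This removes the need for any cutoff and upgrades the conclusion from continuity to Lipschitz continuity with constant $\tfrac{C_1 L C_0}{2}$. Your one flagged subtlety (that the cofactors $p_{\alpha,i}$ in the telescoping identity have positive degree, so one full power of $\max(|v_1|,|v_2|)$ can be extracted) is handled correctly. It is worth noting as a bonus that since $C_0$ can be taken to be $2r$ in the regime of Proposition \ref{estimate1}, your Lipschitz constant is $O(r)$ and $T$ is actually a contraction for $r$ small, which would give existence and uniqueness of the fixed point directly by the Banach fixed point theorem, bypassing both Schauder and Kellogg; the paper does not observe this.
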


\begin{proof}
Let $\epsilon > 0$ and $v_1, v_2 \in \overline{B}_r(u)$.
By the estimates in proposition \ref{estimate1} we can choose $t_1\ge 0$ sufficiently large so that, for all $v \in \overline{B}_r(u)$ and $ t > t_1$,
$$e^{2t}  \int_{t}^\infty |\Phi(t,s)| |b(v(s))| ds  < \epsilon / 3$$
and, moreover, that for all $v \in \overline{B}_r(u)$ and $0\le t \le t_1$,
$$e^{2t}  \int_{t_1}^\infty |\Phi(t,s)| |b(v(s)) | ds  < \epsilon / 3. $$
Hence, if $t > t_1$ then 
$e^{2t}|Tv_1(t) - Tv_2(t) | < 2 \epsilon / 3$.

Moreover, if $0 \le t \le t_1$ then
\begin{align*}
e^{2t}|Tv_1(t) - Tv_2(t) | & \le e^{2t} \left| \int_t^{t_1} \Phi(t,s) (b(v_1(s)) - b(v_2(s)) ) ds \right| + e^{2t} \left| \int_{t_1}^\infty \Phi(t,s) b(v_1(s)) ds \right| \\ & \qquad + e^{2t} \left| \int_{t_1}^\infty \Phi(t,s) b(v_2(s)) ds \right| \\
& \le e^{2t} \int_t^{t_1} | \Phi(t,s)| Lip(b) |v_1(s) - v_2(s)| ds + 2 \epsilon / 3 \\
& \le e^{2t} \int_t^{t_1} |\Phi(t,s)| Lip(b) e^{-2s} ds ||v_1 - v_2|| + 2 \epsilon / 3 \\
& < \epsilon
\end{align*}
if $||v_1 - v_2||$ is sufficiently small since $\int_t^{t_1} Lip(b) e^{2t-2s} | \Phi(t,s)|$ is bounded for $0 \le t \le s \le t_1$.
Here $Lip(b)$ denotes
$$Lip(b) \doteqdot \sup_{x,y \in N} \frac{|b(x) - b(y)|}{|x-y|}$$
where $N \subset \mathbb{R}^4$ is the bounded set $N \doteqdot \{ x \in \mathbb{R}^4 | dist(x, u([0, \infty)) \le r \}$.
Therefore, $T$ is continuous.
\end{proof}

The previous propositions indicate that we are now in a position to apply the Schauder fixed point theorem to prove theorem \ref{mainthmasymp}.

\begin{proof} (of theorem \ref{mainthmasymp})
First, choose $y_0, z_0, w_0 > 0$ so that $\frac{z_0}{y_0} = \alpha$ and $\frac{w_0}{y_0} = \beta$.
Next, take $0<r\ll1$ sufficiently small and rescale $y_0, z_0, w_0$ so that the above conditions still hold and additionally the assumptions of proposition \ref{estimate1} hold for the solution $u(t) = (\frac{A_2}{2d}y_0 e^{-2t}, y_0 e^{-2t}, z_0 e^{-2t}, w_0 e^{-2t})$ of the linear system (\ref{lin2}).
By the previous propositions, the Schauder fixed point theorem applies and so there exists a fixed point $v \in \overline{B}_r(u)$ of $T$.

Formally differentiating $v(t) = u(t) - \int_t^\infty \Phi(t,s) b(v(s)) ds$ implies that $v(t)$ is a solution of the nonlinear ODE system (\ref{nonlin2}).
To justify this claim rigorously, let $\psi_n(t) \doteqdot -\int_t^n \Phi(t,s) b(v(s)) ds$.
Then $\psi_n$ converges locally uniformly to $v - u$ because
\begin{align*}
\sup_{a \le t \le b} | \psi_n(t) - (v-u)(t)| & = \sup_{a \le t \le b} \left| \int_n^\infty \Phi(t,s) b(v(s)) ds \right| \\
& \le \sup_{a \le t \le b} \int_n^\infty |\Phi(t,s)|| b(v(s))| ds\\
& \le \sup_{a \le t \le b} Ce^{-2n}e^{-2t} \qquad \text{(if $n \ge b$ by proposition \ref{estimate1})}\\
& \le Ce^{-2n}e^{-2a}
\end{align*}
Therefore, $$\psi_n'(t) = b(v(t)) + A \psi_n(t) \text{ converges locally uniformly to }Av(t) - Au(t) + b(v(t)).$$
Hence, 
$$v'(t) = u'(t) + \lim_{n \rightarrow \infty} \psi'_n(t) = Av(t) + b(v(t)).$$

Since $||v - u|| < \infty$, $v(t)$ tends to $0$ as $t \rightarrow \infty$.
Moreover, it follows that
\begin{align*}
\lim_{t \rightarrow \infty} \frac{v_4}{v_2} 
& = \lim_{t \rightarrow \infty} \frac{u_4(t) - \int_t^\infty \sum_{j=1}^{4} \Phi_{4j}(t,s)b_j(v(s))ds}{u_2(t) - \int_t^\infty \sum_{j=1}^{4} \Phi_{2j}(t,s)b_j(v(s))ds}  \\
& = \lim_{t \rightarrow \infty} \frac{w_0 e^{-2t} - \int_t^\infty e^{-2t+2s} b_4(v(s)) ds}{y_0 e^{-2t} - \int_t^\infty e^{-2t+2s} b_2(v(s)) ds}\\
& = \lim_{t \rightarrow \infty} \frac{w_0 - \int_t^\infty e^{2s} b_4(v(s)) ds}{y_0- \int_t^\infty e^{2s} b_2(v(s)) ds}\\
& = \frac{w_0}{y_0}\\
& = \beta
\end{align*}
A similar argument shows that $\lim_{t \rightarrow \infty} \frac{v_3}{v_2} = \alpha$. %check these ratios more carefully

It remains to check that $v_2 > 0$ for all $t$.
Since $v$ solves the nonlinear ODE system (\ref{nonlin2}), the sign of $v_2$ is constant for all $t$ and so it suffices to show that $v_2>0$ for some time $t$.
Because $$v_2(t) = y_0e^{-2t} - \int_t^\infty e^{-2t+2s} b_2(v(s)) ds = y_0e^{-2t} + o(e^{-2t})$$ and $y_0 > 0$, $v_2(t) > 0$ for $t$ sufficiently large.
\end{proof}

\begin{cor}
Given $\gamma \in \mathbb{R}$, there exists a solution $(X,Y,Z,W)(t)$ of the nonlinear ODE system (\ref{nonlin1}) defined on an interval containing $(-\infty, 0]$ such that
\begin{enumerate}
\item $Y(t) > 0$ for all $t$,
\item $\lim_{t \rightarrow -\infty} e^{-2t}|(X, Z, W)(t)-(0,1,0)| < \infty$,
\item $\lim_{t \rightarrow -\infty} e^{-t}|Y(t)| < \infty$,
\item $\lim_{t \rightarrow -\infty} \frac{W}{Y^2}  = 1$, and
\item $\lim_{t \rightarrow -\infty}  \frac{1-Z}{Y^2}  = \gamma$.
\end{enumerate}
\end{cor}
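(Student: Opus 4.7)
The plan is to derive this corollary directly from Theorem \ref{mainthmasymp} by inverting the change of variables
\[ \tilde{Y} = Y^2, \qquad \tilde{Z} = 1-Z, \qquad \tilde{t} = -t \]
that produced system (\ref{nonlin2}) from (\ref{nonlin1}). First, I would apply Theorem \ref{mainthmasymp} with $\alpha = \gamma$ and $\beta = 1$ to obtain a solution $(X, \tilde{Y}, \tilde{Z}, W)(\tilde{t})$ of (\ref{nonlin2}) defined on an interval containing $[0, \infty)$, with $\tilde{Y} > 0$ throughout, with $e^{2\tilde{t}}|(X, \tilde{Y}, \tilde{Z}, W)(\tilde{t})|$ controlled as $\tilde{t} \to \infty$, and with $\tilde{Z}/\tilde{Y} \to \gamma$ and $W/\tilde{Y} \to 1$. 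From the construction of $u$ in the proof of Theorem \ref{mainthmasymp} one actually reads off that $e^{2\tilde{t}}\tilde{Y}$, $e^{2\tilde{t}}\tilde{Z}$, $e^{2\tilde{t}}W$, and $e^{2\tilde{t}}X$ all converge to finite limits, which is what is needed here.

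Next, I would set $t = -\tilde{t}$ and define
\[ Y(t) \doteqdot \sqrt{\tilde{Y}(-t)}, \qquad Z(t) \doteqdot 1 - \tilde{Z}(-t), \]
retaining $X$ and $W$ as functions of $t = -\tilde{t}$. Since $\tilde{Y}$ is strictly positive on its domain, the positive square root is smooth. The main verification to carry out is that the resulting $(X,Y,Z,W)(t)$ solves (\ref{nonlin1}); this reduces to checking the $Y$-equation, namely
\[ \frac{dY}{dt} = -\frac{1}{2\sqrt{\tilde{Y}}} \frac{d\tilde{Y}}{d\tilde{t}} = Y\bigl(dX^2 - X + \tilde{Z}^2 - 2\tilde{Z} + 1\bigr) = Y(dX^2 + Z^2 - X), \]
after substituting $\tilde{Z} = 1-Z$. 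The equations for $X$, $Z$, $W$ transform without issue since the change of variables is polynomial in $(\tilde{Y}, \tilde{Z})$. By construction $Y > 0$, giving (1). The interval of existence for the $\tilde{t}$-solution contains $[0,\infty)$, so the $t$-solution is defined on an interval containing $(-\infty, 0]$.

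Finally, I would translate each of the asymptotic conclusions (2)--(5) as $t \to -\infty$ from the corresponding statements as $\tilde{t} \to +\infty$. The boundedness of $e^{2\tilde{t}} X$, $e^{2\tilde{t}} \tilde{Z}$, $e^{2\tilde{t}} W$ gives (2); the boundedness of $e^{2\tilde{t}} \tilde{Y} = e^{-2t} Y^2$ gives $e^{-t}|Y(t)|$ bounded, which is (3); and the chosen limits $\tilde{Z}/\tilde{Y} \to \gamma$ and $W/\tilde{Y} \to 1$ are precisely (5) and (4) respectively, since $(1-Z)/Y^2 = \tilde{Z}/\tilde{Y}$ and $W/Y^2 = W/\tilde{Y}$. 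The only mildly subtle step is the square-root computation verifying that the $Y$-equation is recovered correctly, but this is a mechanical check; the substantive analytic content has already been absorbed into Theorem \ref{mainthmasymp}.
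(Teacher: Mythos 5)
Your proposal is correct and follows exactly the paper's route: apply Theorem \ref{mainthmasymp} with $\alpha=\gamma$, $\beta=1$, then invert the change of variables $X(t)=v_1(-t)$, $Y(t)=\sqrt{v_2(-t)}$, $Z(t)=1-v_3(-t)$, $W(t)=v_4(-t)$ and verify the asymptotics (the paper leaves these checks, including the $Y$-equation computation you carry out, as "straightforward"). Your observation that the needed boundedness of $e^{2\tilde t}|v|$ is read off from the fixed-point construction rather than from the literal statement of Theorem \ref{mainthmasymp}(b) is a fair and accurate reading of what the argument actually uses.
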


\begin{proof}
Apply theorem \ref{mainthmasymp} to obtain a solution $v = (v_1, v_2, v_3, v_4)$ of the nonlinear ODE system (\ref{nonlin2}) with $\alpha = \gamma$ and $\beta =1$.
Then obtain a solution $(X, Y, Z, W)(t)$ of the nonlinear ODE system (\ref{nonlin1}) given by the change of variables
\begin{align*}
X(t) & =  v_1(-t) \\
Y(t) & =  \sqrt{v_2(-t)} \\
Z(t) & =  1-v_3(-t) \\
W(t) & =  v_4(-t)
\end{align*}
It is straightforward to check that $(X,Y,Z,W)(t)$ satisfies the statement of the corollary.
\end{proof}

\begin{cor} \label{mainCor}
Given positive constants $\cC_0, \lambda_0 \in \mathbb{R}$, there exists a smooth gradient steady Ricci soliton $(G(f,g), h_s)$ on $E$ with $g(0) = \lambda_0$ and maximum scalar curvature $\cC_0$. 
\end{cor}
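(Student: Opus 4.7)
The plan is to assemble the results of this section with the earlier asymptotic analysis and the recovery procedure of Theorem \ref{recoverThm}. Given $\mathcal{C}_0, \lambda_0 > 0$, the first step is to choose the free parameter $\gamma$ in the previous corollary so that Proposition \ref{constants} will output the desired maximum scalar curvature. Since Proposition \ref{constants} says $\mathcal{C} \lambda^2 = 2 \gamma - A_2$ when $\gamma = \lim_{t \to -\infty} \tfrac{1-Z}{Y^2}$, the natural choice is $\gamma \doteqdot \tfrac{1}{2}(\mathcal{C}_0 \lambda_0^2 + A_2)$, which is positive because $A_2 = d(d+2) > 0$ and $\mathcal{C}_0, \lambda_0 > 0$.

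Next I would invoke the preceding corollary with this value of $\gamma$ to produce a solution $(X,Y,Z,W)(t)$ of the nonlinear system (\ref{nonlin1}) defined on an interval containing $(-\infty, 0]$ with $Y > 0$, $\lim_{t \to -\infty}(X,Y,Z,W) = (0,0,1,0)$, $\lim_{t \to -\infty} \tfrac{W}{Y^2} = 1$, and $\lim_{t \to -\infty} \tfrac{1-Z}{Y^2} = \gamma$. Because the equation $W_t = W(dX^2 + Z^2 - 2X + Z)$ in (\ref{nonlin1}) preserves the sign of $W$ and the normalization $W/Y^2 \to 1 > 0$ (together with $Y > 0$) forces $W$ to be positive near $t = -\infty$, one obtains $W > 0$ throughout the interval of existence.

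The third step is to set $\lambda \doteqdot \lambda_0 > 0$ in the definition of the multiplicative constant for $g$ and verify the hypotheses of Theorem \ref{recoverThm}. The only nontrivial point is that the first-integral constant $\mathcal{C}$ is positive, which follows directly from Proposition \ref{constants}: it yields $\mathcal{C} \lambda_0^2 = 2\gamma - A_2 = \mathcal{C}_0 \lambda_0^2$, so $\mathcal{C} = \mathcal{C}_0 > 0$. Theorem \ref{recoverThm} then recovers $f, g, h_s, s$ and, because $\lim_{t \to -\infty} \tfrac{W}{Y^2} = 1$, guarantees that $G(f,g) = ds^2 + \hat{g}(f(s), g(s))$ extends smoothly across the zero section of $E$ to a smooth $U(1)$-invariant gradient steady Ricci soliton with $g(0) = \lambda = \lambda_0$ and $\lim_{s \searrow 0} h_s = 0$.

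Finally, since $h_s$ vanishes at $s = 0$, the first integral equation $S + h_s^2 \equiv \mathcal{C}$ discussed after (\ref{integraleqn}) identifies $\mathcal{C}$ with the maximum value of the scalar curvature $S$, which is attained on $B_0 \subset E$; hence the maximum scalar curvature of the resulting soliton is $\mathcal{C} = \mathcal{C}_0$, as required. No new estimate is needed: the real work has already been done in Theorem \ref{mainthmasymp} and in the translation of its conclusion through Proposition \ref{constants} and Theorem \ref{recoverThm}, so this corollary is essentially a matter of choosing parameters to hit the prescribed $(\mathcal{C}_0, \lambda_0)$.
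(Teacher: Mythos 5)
Your proposal is correct and follows essentially the same route as the paper: choose $\gamma = \tfrac{1}{2}(\cC_0 \lambda_0^2 + A_2)$, apply the preceding corollary to get a solution of (\ref{nonlin1}) with $\lim_{t\to-\infty} W/Y^2 = 1$ and $\lim_{t\to-\infty}(1-Z)/Y^2 = \gamma$, normalize $\lambda = \lambda_0$, and conclude via Proposition \ref{constants} and Theorem \ref{recoverThm}. The extra details you supply (positivity of $W$ from sign preservation, and the identification of $\cC$ with the maximum scalar curvature via the first integral) are correct and only make explicit what the paper leaves implicit.
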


\begin{proof}
Let $\gamma \doteqdot \frac{1}{2} (C_0 \lambda_0^2 + A_2) > 0$ and $(X,Y,Z,W)(t)$ be a solution of the nonlinear ODE system (\ref{nonlin1}) with the properties of the previous corollary.
Let $t_0$ be in the domain of $(X,Y,Z,W)(t)$.
Define $f,g,s,h_s$ as in remark \ref{recover} with $g(t_0)$ chosen such that $\lambda = \lambda_0$. %do we need part about what t_0 is?
It follows from proposition \ref{constants} that $\cC = \cC_0$.
By theorem \ref{recoverThm}, $(G(f,g), h_s)$ is a smooth gradient steady Ricci soliton metric on $E$ with $g(0) = \lambda_0$ and maximum scalar curvature $\cC_0$.
\end{proof}

%%%%%%%%%%%%%%%%%%%%%%%%%%%%%%%%%%%%%%%%%%%%%%%%%%%%%%%%%%%%%%%%%%%%%%
\section{Uniqueness} \label{uniquenessSection}
%%%%%%%%%%%%%%%%%%%%%%%%%%%%%%%%%%%%%%%%%%%%%%%%%%%%%%%%%%%%%%%%%%%%%%
Next, we show that for given positive constants $\cC_0, \lambda_0$ the Ricci soliton in the previous corollary is unique.
The primary tool is the following result of Kellogg ~\cite{Kellogg76}.

\begin{thm} \label{Kellogg}
(Kellogg)
Let $B$ be a bounded convex open subset of a real Banach space $\mathbb{V},$ and let $T: \overline{B} \rightarrow \overline{B}$ be a compact continuous map which is continuously Fr\'echet differentiable on B.
Suppose that
\begin{enumerate}[(a)]
\item for each $v \in B$, 1 is not an eigenvalue of the derivative $DT_v$ of $T$ at $v$, and
\item for each $v \in \partial B, v \ne T(v).$
\end{enumerate}
Then $T$ has a unique fixed point.
\end{thm}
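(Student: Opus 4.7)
The plan is to combine Schauder's fixed point theorem with Leray--Schauder degree theory, following Kellogg \cite{Kellogg76}. Because $T$ is a compact continuous self-map of the bounded convex set $\overline{B}$, Schauder's theorem produces at least one fixed point, and condition (b) places every fixed point in the open set $B$. At any fixed point $v$, compactness of $T$ forces $DT_v$ to be a compact linear operator on $\mathbb{V}$, whose spectrum is discrete away from $0$; condition (a) then implies that $I - DT_v$ is an injective Fredholm operator of index zero, hence invertible. The implicit function theorem in Banach spaces shows that each fixed point is isolated, and combined with the closedness of the fixed-point set and its exclusion from $\partial B$ this forces that set to be finite.

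Next I would compute $\deg(I - T, B, 0)$ using the homotopy $T_s(v) = (1-s) v_0 + s T(v)$ for a base point $v_0 \in B$. For $v \in \partial B$ and $s \in [0,1)$, convexity of $\overline{B}$ together with $T(\overline{B}) \subset \overline{B}$ forces $T_s(v) \in B$, so $T_s(v) \ne v$; at $s=1$, condition (b) again excludes fixed points on $\partial B$. Homotopy invariance therefore gives $\deg(I - T, B, 0) = \deg(I - T_0, B, 0) = 1$, and by the Leray--Schauder index formula this degree equals $\sum_v (-1)^{\beta(v)}$ taken over fixed points $v \in B$, where $\beta(v)$ is the sum of algebraic multiplicities of real eigenvalues of $DT_v$ lying in $(1, \infty)$.

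The hard part, which is the essential content of \cite{Kellogg76}, is promoting the identity $\sum_v (-1)^{\beta(v)} = 1$ to genuine uniqueness, since the individual indices $\pm 1$ could a priori combine in nontrivial ways. Kellogg's argument is a parametrized continuation: given a fixed point $v_*$ produced by Schauder, study the zero set of $\Phi(v, s) = v - s T(v) - (1-s) v_*$ for $s \in [0,1]$, which equals $\{v_*\}$ at $s=0$ and the full fixed-point set of $T$ at $s=1$. The derivative $D_v \Phi = I - s\, DT_v$ fails to be invertible only when $1/s$ lies in the discrete spectrum of $DT_v$; condition (a) at $s=1$ together with a generic perturbation argument shows that the zero set is a one-dimensional manifold whose only branch reaching $s=1$ emerges from $v_*$, any additional component being forced to exit through $\partial B$, which is excluded by (b). Executing this tracking cleanly, while preserving the non-degeneracy hypothesis along the homotopy, is the main technical obstacle; once it is done, the fixed point is unique.
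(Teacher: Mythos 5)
First, a point of reference: the paper does not prove this statement at all --- it is quoted verbatim from Kellogg's paper \cite{Kellogg76} and used as a black box. So your proposal can only be measured against Kellogg's original argument. Your setup is the correct one and matches his: Schauder gives a fixed point, (b) pushes all fixed points into $B$, compactness of $T$ makes each $DT_v$ compact so that $I - DT_v$ is Fredholm of index zero and hence invertible by (a), the inverse function theorem makes fixed points isolated (note: to conclude finiteness you need compactness of the fixed-point set, which follows from $F = T(F) \subset \overline{T(\overline{B})}$, not merely its closedness), the convex homotopy to the constant map $v_0$ gives $\deg(I-T,B,0)=1$, and the Leray--Schauder index formula gives $\sum_{v \in F} (-1)^{\beta(v)} = 1$. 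You correctly recognize that this identity alone does not yield uniqueness.

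The genuine gap is everything after that point. The continuation argument you sketch is not sound as stated: the zero set of $\Phi(v,s) = v - sT(v) - (1-s)v_*$ need not be a one-manifold for a map that is merely $C^1$, its components can be closed loops in $B \times (0,1)$ touching neither $s=0$, $s=1$, nor $\partial B$, and --- most importantly --- a branch leaving one fixed point at $s=1$ can turn around and return to $s=1$ at a different fixed point, which is exactly the cancellation pattern that the index count permits. Nothing in the hypotheses controls $\sigma(DT_v)$ at the points $1/s$ needed to rule this out, and you acknowledge you cannot execute the tracking. The idea you are missing is much simpler and is the reason hypothesis (a) is stated for \emph{every} $v \in B$ rather than only at fixed points: the function $v \mapsto (-1)^{\beta(v)}$ is locally constant on $\{v : 1 \notin \sigma(DT_v)\} \supseteq B$. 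Indeed, the eigenvalues of the compact real operator $DT_v$ of modulus bounded away from $0$ vary continuously with multiplicity, and the parity of $\beta(v)$ can change only if eigenvalue mass enters or leaves the ray $(1,\infty)$; it cannot cross the endpoint $1$ by (a), cannot escape to infinity since $\|DT_v\|$ is locally bounded, and can only leave the real axis in conjugate pairs, which preserves parity. Since $B$ is convex, hence connected, $(-1)^{\beta(v)} \equiv \epsilon \in \{\pm 1\}$ on all of $B$, and then $1 = \sum_{v \in F} (-1)^{\beta(v)} = \epsilon\,|F|$ forces $|F| = 1$. With that one observation inserted in place of your continuation sketch, your degree-theoretic framework closes into a complete proof.
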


\noindent Thus, to show the uniqueness of the fixed point obtained in the previous section, it suffices to check that conditions $(a)$ and $(b)$ of theorem \ref{Kellogg} hold for the map $T$ from the previous section.

\begin{prop} \label{uniquefirst}
$T$ is continuously Fr\'echet differentiable on $B_r(u) = int(\overline{B}_r(u))$ with derivative 
$$DT_vh(t) = -\int_t^\infty \Phi(t,s) Db(v(s)) h(s) ds$$
where $Db$ denotes the derivative of $b : \mathbb{R}^4 \rightarrow \mathbb{R}^4$.
\end{prop}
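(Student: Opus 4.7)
The plan is to differentiate formally under the integral sign in $Tv(t) = u(t) - \int_t^\infty \Phi(t,s)b(v(s))\,ds$: since $b$ is a cubic polynomial on $\mathbb{R}^4$ and we are perturbing $v$ by $h \in \mathbb{V}$, the natural candidate is
$$DT_v h(t) = -\int_t^\infty \Phi(t,s)\, Db(v(s))\, h(s)\, ds,$$
and the proof reduces to verifying that this really is the Fréchet derivative and that it depends continuously on $v$. The bulk of the work is running the same exponential-weight estimate that appears in Proposition \ref{estimate1}.

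First I would check the candidate is a well-defined bounded linear operator on $\mathbb{V}$. The key input is that $Db$ is a polynomial of degree $2$, so on the bounded set $N = \{x : \operatorname{dist}(x, u([0,\infty))) \le r\} \subset \mathbb{R}^4$ there is a uniform bound $|Db(v(s))| \le K$ for all $v \in \overline{B}_r(u)$. Combining this with the pointwise bound $|h(s)| \le \|h\|\, e^{-2s}$ and the same matrix estimate used to obtain $e^{2t}|\Phi(t,s)| \le C_1 e^{2s}$ in Proposition \ref{estimate1} gives
$$e^{2t} |DT_v h(t)| \le C_1 K \int_t^\infty e^{2s}\, \|h\|\, e^{-2s}\, ds \cdot (\text{decaying factor}) \le C'\, \|h\|\, e^{-2t},$$
so $DT_v \in L(\mathbb{V}, \mathbb{V})$ with operator norm bounded uniformly on $\overline{B}_r(u)$.

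Next I would establish Fréchet differentiability. For $v \in B_r(u)$ and $h$ with $v+h \in \overline{B}_r(u)$, subtract the candidate from the difference quotient to get
$$T(v+h)(t) - Tv(t) - DT_v h(t) = -\int_t^\infty \Phi(t,s)\bigl[b(v(s)+h(s)) - b(v(s)) - Db(v(s))h(s)\bigr]\,ds.$$
Because $b$ is a polynomial of degree $3$, Taylor's theorem with remainder (applied on the bounded set $N$) gives
$$\bigl|b(v(s)+h(s)) - b(v(s)) - Db(v(s))h(s)\bigr| \le M |h(s)|^2 \le M \|h\|^2 e^{-4s}$$
for a constant $M$ depending only on $b$ and $r$. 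The same $\Phi$ estimate as before then yields
$$e^{2t}\bigl|T(v+h)(t) - Tv(t) - DT_v h(t)\bigr| \le C_1 M \|h\|^2 \int_t^\infty e^{2s} e^{-4s}\, ds \le C''\, \|h\|^2\, e^{-2t},$$
so taking the supremum over $t \ge 0$ we obtain $\|T(v+h) - Tv - DT_v h\| \le C''\|h\|^2 = o(\|h\|)$.

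Finally I would verify continuity of $v \mapsto DT_v$ as a map $B_r(u) \to L(\mathbb{V}, \mathbb{V})$. Since $Db$ is a polynomial, it is Lipschitz on the bounded set $N$ with some constant $L$, so for $v_1, v_2 \in \overline{B}_r(u)$ and any $h \in \mathbb{V}$
$$e^{2t}\bigl|(DT_{v_1} - DT_{v_2})h(t)\bigr| \le C_1 L \int_t^\infty e^{2s} \|v_1-v_2\|e^{-2s}\, \|h\|e^{-2s}\, ds \le C'''\, \|v_1-v_2\|\, \|h\|\, e^{-2t},$$
giving $\|DT_{v_1} - DT_{v_2}\|_{\mathrm{op}} \le C'''\|v_1 - v_2\|$, which is more than enough. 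The main (and essentially only) obstacle is bookkeeping the matrix entries of $e^{2t}\Phi(t,s)$ to confirm the same weighted bound $Ce^{2s}$ used in Proposition \ref{estimate1} applies in each integral above; this is routine since $\Phi$ is explicit.
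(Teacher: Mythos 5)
Your approach is the same as the paper's: the paper's entire displayed proof is exactly your second step, namely the Taylor remainder bound $|b(v+h)-b(v)-Db(v)h|\le M\|h\|^2e^{-4s}$ fed through the weighted estimate on $e^{2t}\Phi(t,s)$ to get $\|T(v+h)-Tv-DT_vh\|\le C\|h\|^2$. Your additional verifications that $DT_v$ is bounded and that $v\mapsto DT_v$ is Lipschitz into $L(\mathbb{V},\mathbb{V})$ are not written out in the paper but are needed for the word ``continuously'' in the statement, so including them is a genuine improvement; your continuity estimate, with integrand $e^{2s}\cdot e^{-2s}\cdot e^{-2s}=e^{-2s}$, is correct.

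One step as written does not work: in the boundedness check you use only the uniform bound $|Db(v(s))|\le K$ on the set $N$, which leaves the integrand $e^{2s}\,\|h\|\,e^{-2s}=\|h\|$, so $\int_t^\infty$ diverges; the ``decaying factor'' you append outside the integral cannot repair an infinite integral. The fix is to put the decay inside: since $b$ has no constant or linear terms, $Db$ is a polynomial with no constant term, so $|Db(v(s))|\le K'|v(s)|\le K'C_0e^{-2s}$ for $v\in\overline{B}_r(u)$, and then $e^{2t}|DT_vh(t)|\le C_1K'C_0\|h\|\int_t^\infty e^{-2s}\,ds\le C'\|h\|e^{-2t}$. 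This is exactly the three-factor integrand $e^{2s}e^{-2s}e^{-2s}$ that the paper uses in the subsequent proposition showing $1$ is not an eigenvalue of $DT_v$. With that correction your argument is complete.
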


\begin{proof}
\begin{align*}
||T(v+h) - Tv - DT_vh|| & =  \sup_t e^{2t} \left| \int_t^\infty \Phi(t,s) ( b(v(s)+h(s)) - b(v(s)) - Db(v(s))h(s) ) ds \right| \\
& \le  \sup_t \int_t^\infty e^{2t} |\Phi(t,s)| | b(v(s)+h(s)) - b(v(s)) - Db(v(s))h(s)| ds \\
& \le  \sup_t C_1 \int_t^\infty e^{2s}  |h(s)|^2 ds \qquad \text{(by the explicit form of $Db$ and $\Phi$)}\\ %CHECK THIS LAST STEP
& \le C_1 ||h||^2 \sup_t \int_t^\infty e^{-2s} ds\\
& \le C_2 ||h||^2
\end{align*}
\end{proof}

\begin{prop}
For each $v \in int(\overline{B}_r(u))$, 1 is not an eigenvalue of $DT_v \in B(\mathbb{V}, \mathbb{V})$.
\end{prop}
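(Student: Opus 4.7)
The plan is to show directly that the operator norm of $DT_v$ on $\mathbb{V}$ is strictly less than $1$, provided $r$ is chosen sufficiently small (a smallness imposed on top of what is already required in Proposition \ref{estimate1}). Once this is established, $I - DT_v$ is invertible on $\mathbb{V}$ via the Neumann series, so in particular $1$ is not an eigenvalue of $DT_v$.

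To carry this out, I first assemble four pointwise ingredients that the previous arguments have already made available. The explicit form of $\Phi$ displayed earlier gives $|\Phi(t,s)| \le C_\Phi e^{2(s-t)}$ for $0 \le t \le s$. Because the components of $b$ are degree-$3$ polynomials with no constant or linear terms, $Db$ vanishes at the origin, so $|Db(w)| \le K |w|$ on any fixed bounded neighborhood of $0$, with $K$ depending only on the coefficients of $b$. For any $v \in \overline{B}_r(u)$, the triangle inequality together with the explicit decay $|u(s)| \le C_u e^{-2s}$ (which is evident from the formula for $u(t)$) gives $|v(s)| \le (C_u + r) e^{-2s}$, which for $r$ and $|u_0|$ small is bounded by $2 r e^{-2s}$. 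Finally, $|h(s)| \le \|h\| e^{-2s}$ by definition of the norm on $\mathbb{V}$.

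Plugging these four estimates into $DT_v h(t) = -\int_t^\infty \Phi(t,s) Db(v(s)) h(s)\, ds$ and performing the elementary integration yields $|DT_v h(t)| \le 2 C_\Phi K r \|h\| \, e^{-2t} \int_t^\infty e^{-2s}\, ds$, so that after multiplying by $e^{2t}$ and taking the supremum one obtains $\|DT_v h\| \le C_\Phi K r \|h\|$. Thus $\|DT_v\|_{\mathrm{op}} \le C_\Phi K r$, and choosing $r$ small enough that $C_\Phi K r < 1$ completes the argument.

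I do not anticipate any substantive obstacle here: the estimates are essentially identical to those used to prove that $T$ maps $\overline{B}_r(u)$ into itself in Proposition \ref{estimate1}, with the structural fact that $b$ vanishes to order at least $2$ at the origin playing the decisive role by forcing $|Db(v(s))|$ to decay like $e^{-2s}$ uniformly for $v \in \overline{B}_r(u)$. The only delicate point is to verify that the smallness of $r$ needed for $\|DT_v\|_{\mathrm{op}} < 1$ is compatible with the smallness already imposed in Proposition \ref{estimate1}; since both conditions are of the form $r \le r_0$ for constants depending only on the fixed data $A_2$, $d$, and the polynomial coefficients of $b$, they can be imposed simultaneously.
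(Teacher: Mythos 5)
Your argument is correct, and the key estimate checks out: since the components of $b$ are polynomials with no constant or linear terms, $Db$ vanishes at the origin, so $|Db(v(s))| \le K|v(s)| \le K C_0 r e^{-2s}$ uniformly on $\overline{B}_r(u)$ once $|u_0| < r$, and combining this with $|\Phi(t,s)| \le C_\Phi e^{2(s-t)}$ and $|h(s)| \le \|h\| e^{-2s}$ gives $e^{2t}|DT_v h(t)| \le C r \|h\| e^{-2t}$, hence $\|DT_v\|_{\mathrm{op}} \le Cr < 1$ for $r$ small, and the Neumann series kills the eigenvalue. However, this is a genuinely different route from the paper's. The paper does not shrink $r$ further at this stage: it restricts a putative fixed point $h$ of $DT_v$ to the tail $[a,\infty)$, shows $DT_v|_{\mathbb{V}_a}$ is a contraction for $a$ large (the decisive smallness coming from $\sup_{t \ge a}\int_t^\infty e^{-2s}\,ds = \tfrac12 e^{-2a}$ rather than from $r$), concludes $h \equiv 0$ on $[a,\infty)$, and then propagates the vanishing back to all of $[0,\infty)$ by uniqueness of solutions of the linear ODE $h' = (A + Db(v))h$. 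Your version is shorter and dispenses with the ODE-uniqueness step; indeed, a global bound $\|DT_v\|_{\mathrm{op}} < 1$ on the convex ball would make $T$ itself a contraction there, so Banach's fixed point theorem would supersede the whole Schauder--Kellogg apparatus. What the paper's version buys is that it proves the proposition for the $r$ already fixed after Proposition \ref{estimate1}, with no further constraint, and the tail-restriction technique is reused verbatim in the later theorem on uniqueness of fixed points of $T_u$ for arbitrary $u$. The one caveat to flag explicitly is that your proof establishes the statement only for $r$ below an additional threshold, whereas the proposition as stated refers to the previously fixed $r$; this is harmless here because all the smallness conditions depend only on $A_2$, $d$, and the coefficients of $b$ and can be imposed simultaneously (as Proposition \ref{uniquelast} already does), but the restriction should be recorded in the statement if your proof is substituted.
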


\begin{proof}
Suppose there exists $h \in \mathbb{V}$ such that
$h(t) =-\int_t^\infty \Phi(t,s) Db(v(s)) h(s) ds$, or,
equivalently, $h$ is a fixed point of the operator $DT_v: \mathbb{V} \rightarrow \mathbb{V}$.
Let $a>0$.
If $\mathbb{V}_a$ denotes the image of $\mathbb{V}$ under restriction to the domain $[a,\infty)$ and if $h_a \in \mathbb{V}_a$ denotes $h$ restricted to $[a, \infty)$, then $h_a$ is a fixed point of $DT_v |_{\mathbb{V}_a}: \mathbb{V}_a \rightarrow \mathbb{V}_a$.
We claim that $DT_v |_{\mathbb{V}_a}: \mathbb{V}_a \rightarrow \mathbb{V}_a$ is a contraction for $a\gg1$ sufficiently large.
Indeed, for any $a >0$ and $u, w \in \mathbb{V}_a$,
\begin{align*}
||DT_v |_{\mathbb{V}_a} u - DT_v |_{\mathbb{V}_a} w||_{\mathbb{V}_a} & = \sup_{t\ge a} e^{2t} \left| \int_t^\infty \Phi(t,s) Db(v(s)) (u(s) - w(s)) ds \right| \\
& \le \sup_{t\ge a} \int_t^\infty e^{2t} |\Phi(t,s)| |Db(v(s))| |u(s) - w(s)| ds\\
& \le \| u - w \|_{\mathbb{V}_a} \sup_{t\ge a} \int_t^\infty C_1 e^{2s}e^{-2s}e^{-2s} ds\\
& \le C_1 \| u - w \|_{\mathbb{V}_a} \sup_{t \ge a} \int_t^\infty e^{-2s} ds\\
& \le \frac{1}{2}C_1 \| u - w \|_{\mathbb{V}_a} e^{-2a}
\end{align*}
where $C_1$ depends only on $\Phi$, the polynomial components of $b$, and $v: [0,\infty) \rightarrow \mathbb{R}^4$.
Because $C_1$ is independent of $a$ and $\| u-w \|_{\mathbb{V}_a}$ is nonincreasing in $a$, it follows that $DT_v |_{\mathbb{V}_a}: \mathbb{V}_a \rightarrow \mathbb{V}_a$ is a contraction for $a$ sufficiently large.
Therefore, the contraction mapping theorem implies that $DT_v |_{\mathbb{V}_a}$ has a unique fixed point in $\mathbb{V}_a$.
Since $h_a$ and $0$ are both fixed points of $DT_v |_{\mathbb{V}_a}$, it must be the case that $h_a = 0$ for $a\gg1$ sufficiently large.
Equivalently, $h(t) = 0$ for all $t\gg1$ sufficiently large.

Next, differentiating both sides of $h(t) =-\int_t^\infty \Phi(t,s) Df(v(s)) h(s) ds$ with respect to $t$ shows that $h: [0, \infty) \rightarrow \mathbb{R}^4$ satisfies the linear ODE system $$\frac{dh}{dt}(t) = \Big(A + Df\big(v(t)\big)\Big) h(t).$$
Uniqueness of solutions then implies that $h(t)=0$ for all $t\ge 0$.
Therefore, $1$ is not an eigenvalue of $DT_v$.
\end{proof}

\begin{prop} \label{uniquelast}
For $r$ sufficiently small, no $v \in \partial \overline{B}_r(u)$ is a fixed point of $T$.
\end{prop}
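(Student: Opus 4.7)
The plan is to simply revisit the quantitative estimate already established in the proof of Proposition \ref{estimate1} and observe that, once $r$ is small enough, the contraction of the ``radius'' is strict. Concretely, in the proof of Proposition \ref{estimate1} it was shown that when $|u_0|<r$ we may take $C_0 = 2r$ as an upper bound for $e^{2t}|v(t)|$ on $\overline{B}_r(u)$, and then the estimate
$$e^{2t}|T v(t) - u(t)| \;\le\; 4 C_1 K_1 r^2 + 8 C_1 K_2 r^3$$
holds uniformly in $t \ge 0$ and in $v \in \overline{B}_r(u)$, where $C_1$ depends only on $\Phi$ and $K_1,K_2$ depend only on the polynomial components of $b$.

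Since $C_1,K_1,K_2$ are independent of $r$, the right-hand side factors as $r\bigl(4 C_1 K_1 r + 8 C_1 K_2 r^2\bigr)$, which is \emph{strictly} less than $r$ once $r$ is chosen small enough that $4 C_1 K_1 r + 8 C_1 K_2 r^2 < 1$. Shrinking $r$ accordingly (and $|u_0|<r$ correspondingly, as required for $C_0=2r$), we obtain
$$\|Tv - u\| \;=\; \sup_{t\ge 0} e^{2t}\,|Tv(t)-u(t)| \;<\; r$$
for \emph{every} $v \in \overline{B}_r(u)$. In other words, the image of $T$ lies in the open ball $B_r(u) = \operatorname{int}\overline{B}_r(u)$.

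Consequently, for any $v \in \partial \overline{B}_r(u)$ we have $\|v-u\|=r$ while $\|T v - u\| < r$, so $v \ne T v$. This verifies condition (b) of Theorem \ref{Kellogg}. The argument is essentially book-keeping on constants already extracted in Proposition \ref{estimate1}, and the only ``obstacle'' is making sure one does not rechoose $r$ so small that it conflicts with the requirement $|u_0|<r$ used to build the fixed-point candidate in the preceding section; but since both conditions merely ask that $r$ (and then $|u_0|$) be chosen below explicit thresholds depending only on $\Phi$ and $b$, they can be satisfied simultaneously.
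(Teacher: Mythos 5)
Your proposal is correct and is essentially the paper's own argument: the paper's proof of this proposition simply points back to the proof of Proposition \ref{estimate1}, where it is already noted that for $0<|u_0|<r$ sufficiently small the bound $\sup_{t\ge 0} e^{2t}|T_uv(t)-u(t)|\le 4C_1K_1r^2+8C_1K_2r^3$ can be made strictly less than $r$, so the image lands in the open ball. Your write-up just makes that bookkeeping explicit.
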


\begin{proof}
For $r$ sufficiently small, one can see from the proof of proposition \ref{estimate1} that the image of $\overline{B}_r(u)$ is contained in the interior of $\overline{B}_r(u)$.
\end{proof}

\noindent These propositions \ref{uniquefirst}-\ref{uniquelast} together indicate that theorem \ref{Kellogg} applies and so
\begin{cor}
The fixed point $v$ obtained in the proof of theorem \ref{mainthmasymp} is unique.
\end{cor}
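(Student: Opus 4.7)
The plan is to invoke Kellogg's theorem (Theorem \ref{Kellogg}) directly, taking $B = B_r(u) = \mathrm{int}\bigl(\overline{B}_r(u)\bigr)$ inside the Banach space $\mathbb{V}$. Observe that $B_r(u)$ is an open ball in the norm $\|\cdot\|$, hence automatically bounded, convex, and open, so the ambient hypotheses of Kellogg's theorem on $B$ are satisfied with no extra work.

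Next I would check the four structural requirements on $T$ in turn, each of which is essentially already established in an earlier proposition in this section. Compactness and continuity of $T : \overline{B}_r(u) \to \overline{B}_r(u)$ are exactly the content of the two propositions stating that $T\bigl(\overline{B}_r(u)\bigr)$ is precompact in $\mathbb{V}$ and that $T$ is continuous (combined with the last assertion of Proposition \ref{estimate1}, which ensured that $T$ maps $\overline{B}_r(u)$ into itself in the first place). Continuous Fr\'echet differentiability of $T$ on the open ball $B_r(u)$, together with the explicit formula for $DT_v$, is Proposition \ref{uniquefirst}. Condition (a) of Theorem \ref{Kellogg}, that $1$ is never an eigenvalue of $DT_v$ for $v \in B_r(u)$, is the immediately preceding proposition. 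Condition (b), that no boundary point $v \in \partial \overline{B}_r(u)$ satisfies $v = T(v)$, is Proposition \ref{uniquelast}.

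With all hypotheses of Kellogg's theorem verified, I conclude that $T$ has a unique fixed point in $\overline{B}_r(u)$. Since the fixed point $v$ produced by the Schauder argument in the proof of Theorem \ref{mainthmasymp} lies in $\overline{B}_r(u)$ and is a fixed point of $T$, uniqueness identifies it as the unique such fixed point, proving the corollary. I do not anticipate any genuine obstacle here: the entire content of the uniqueness step has been distilled into the three propositions \ref{uniquefirst}--\ref{uniquelast}, and the corollary is a one-line deduction from Kellogg's theorem given that groundwork.
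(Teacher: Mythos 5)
Your proposal matches the paper's argument exactly: the corollary is deduced by applying Kellogg's theorem (Theorem \ref{Kellogg}) to $T$ on $\overline{B}_r(u)$, with the hypotheses supplied by the self-mapping and compactness propositions together with Propositions \ref{uniquefirst}--\ref{uniquelast}. No gaps; this is the same one-line deduction the paper makes.
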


\noindent Notice that the fixed point $v$ is only unique among $v$ in a small ball about the given solution $u$ of the linear system (\ref{lin2}) where $\|u\|$ is sufficiently small.
In fact, a stronger uniqueness result holds.

\begin{thm}
For any $u \in \mathbb{V}$, fixed points of the associated operator $T_u: \mathbb{V} \rightarrow \mathbb{V}$ are unique.
\end{thm}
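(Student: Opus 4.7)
Let $v_1, v_2 \in \mathbb{V}$ be two fixed points of $T_u$. My plan is to show that $h \doteqdot v_1 - v_2$ satisfies a linear homogeneous ODE and to force $h$ to vanish on some tail $[a, \infty)$ by a contraction-type estimate; standard uniqueness for linear ODE initial value problems will then give $h \equiv 0$ on all of $[0, \infty)$, so that $v_1 = v_2$.

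Subtracting the identities $v_i = T_u v_i$ and applying the mean value theorem in integral form yields
$$h(t) = -\int_t^\infty \Phi(t,s)\, M(s)\, h(s)\, ds, \qquad M(s) \doteqdot \int_0^1 Db\bigl(\theta v_1(s) + (1-\theta) v_2(s)\bigr)\, d\theta,$$
where $M: [0,\infty) \to \mathbb{R}^{4 \times 4}$ is continuous and depends only on $v_1, v_2$. Differentiating this identity (using $\partial_t \Phi(t,s) = A\Phi(t,s)$ and $\Phi(t,t) = I$) shows that $h$ is $C^1$ and satisfies the linear homogeneous system $h_t = (A + M(t))\, h$. Hence standard uniqueness for linear ODE with continuous coefficients reduces the theorem to producing a single point $t_0 \ge 0$ with $h(t_0) = 0$.

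To that end, I exploit the decay built into $\mathbb{V}$. Because $b$ is a polynomial with no constant or linear terms, $|Db(w)| \le C_0(|w| + |w|^2)$, and because $v_1, v_2 \in \mathbb{V}$ decay at least like $e^{-2s}$, this gives the pointwise bound $|M(s)| \le K e^{-2s}$ for every $s \ge 0$, where $K$ depends only on $\|v_1\|, \|v_2\|$. Writing $\|h\|_a \doteqdot \sup_{s \ge a} e^{2s}|h(s)| < \infty$ and combining the above with the estimate $|e^{2t}\Phi(t,s)| \le C_1 e^{2s}$ for $0 \le t \le s$ (used already in the proof of Proposition \ref{estimate1}), I obtain, for every $t \ge a \ge 0$,
$$e^{2t}|h(t)| \;\le\; C_1 K \int_t^\infty |h(s)|\, ds \;\le\; \tfrac{C_1 K}{2}\, \|h\|_a\, e^{-2t}.$$
Taking the supremum over $t \ge a$ gives $\|h\|_a \le \tfrac{C_1 K}{2} e^{-2a}\, \|h\|_a$; choosing $a$ large enough that $\tfrac{C_1 K}{2} e^{-2a} < 1$ forces $\|h\|_a = 0$, so in particular $h(a) = 0$. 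Linear ODE uniqueness then extends this vanishing from $t = a$ to all of $[0, \infty)$. The only subtlety is that the threshold $a$ depends on $\|v_1\|$ and $\|v_2\|$ (so the contraction is not uniform in fixed points), but since we only need $h$ to vanish at a single point to invoke the ODE uniqueness step, this is harmless and the argument is global in $u \in \mathbb{V}$.
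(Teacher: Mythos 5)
Your proof is correct, but it takes a genuinely different route from the paper's. The paper shows uniqueness by translating: it sets $v_a(t)=v(t+a)$, verifies $\|v_a-u_a\|\le e^{-2a}\|v-u\|$ and $\|u_a\|\to 0$, and then invokes the local uniqueness corollary obtained from Kellogg's theorem (whose hypotheses were checked via the Fr\'echet derivative $DT_v$) to force the two fixed points to agree on a tail $[a,\infty)$, finishing with backward uniqueness for the ODE. You instead apply the mean value theorem to $b(v_1)-b(v_2)$ to get the linear integral identity $h(t)=-\int_t^\infty \Phi(t,s)M(s)h(s)\,ds$ with $|M(s)|\le Ke^{-2s}$, and run a contraction estimate directly on the tail norm $\|h\|_a$ to get $\|h\|_a\le \tfrac{C_1K}{2}e^{-2a}\|h\|_a$, hence $h\equiv 0$ on $[a,\infty)$ for large $a$; the same backward ODE uniqueness then finishes. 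Technically your tail estimate mirrors the paper's proof that $1$ is not an eigenvalue of $DT_v$ (which is likewise a contraction bound on $\mathbb{V}_a$), but you apply it to the nonlinear difference of two fixed points rather than to the linearized operator, so your argument bypasses Kellogg's theorem and the translation trick entirely and is more elementary and self-contained; what it does not give you, and what the paper still needs Kellogg for, is nothing --- for this particular theorem your route would suffice on its own, though the paper's Fr\'echet-differentiability analysis is structured to deliver the local Kellogg statement as an intermediate result. The only points to make explicit are that $\|h\|_a<\infty$ (immediate since $h\in\mathbb{V}$) before cancelling it, and that fixed points of $T_u$ are $C^1$ and satisfy $v'=Av+b(v)$, which follows by the same locally uniform convergence argument used in the proof of Theorem \ref{mainthmasymp}.
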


\begin{proof}
Say $v, \tl{v} \in \mathbb{V}$ are fixed points of $T$.
For a given $a >0$, let $v_a \in \mathbb{V}$ denote the function $v_a(t) = v(t+a)$ and similarly define $\tl{v}_a, u_a$.
It follows that 
\begin{align*}
||v_a - u_a||_\mathbb{V} &\le \sup_{t \ge 0} e^{2t} | v_a(t) - u_a(t)|\\
&= \sup_{t \ge 0} e^{2t} |v(t+a) - u(t+a)|\\
&= \sup_{t \ge 0} e^{2t} \left| \int_{t+a}^\infty \Phi(t+a, s) f(v(s)) ds\right|  & \text{(change variables $\hat{t}=t+a$)}\\
&= \sup_{\hat{t} \ge a} e^{2(\hat{t}-a)} \left| \int_{\hat{t}}^\infty \Phi(\hat{t}, s) f(v(s)) ds \right| \\
&= e^{-2a} \sup_{t \ge a} e^{2t} \left| \int_{t}^\infty \Phi(t,s) f(v(s)) ds \right|\\
&\le e^{-2a} \sup_{t \ge 0} e^{2t} \left| \int_t^\infty \Phi(t,s) f(v(s)) ds \right|\\
&= e^{-2a} ||v - u||_\mathbb{V}.
\end{align*}
Moreover, $u \in \mathbb{V}$ implies that $||u_a||_\mathbb{V} \rightarrow 0$ as $a \rightarrow \infty$.
Therefore, there exists $a>0$ sufficiently large such that $v_a, \tl{v}_a$ lie in a suitably small ball around $u_a$ and $u_a$ has suitably small norm.
The above corollary then applies and so $v(t) = \tl{v}(t)$ for all $t \ge a$.
Differentiating $v = Tv$ and $\tl{v} = Tv$ with respect to $t$, it follows that $v$ and $\tl{v}$ solve the same ODE and satisfy $v(a) = \tl{v}(a)$.
%Standard uniqueness theory then implies that 
Hence, $v(t) = \tl{v}(t)$ for all $t \ge 0$.
\end{proof}

\begin{thm} \label{uniqueness}
Let $E$ be the total space of a complex line bundle $E \rightarrow B$ over a Fano K\"ahler-Einstein base $B$ such that the first Chern class $c_1(E)$ of $E$ is a %nonzero
multiple of $c_1(B)$ in $H^2(B, \mathbb{R})$. %do we need the nonzero multiple part? don't think so
If $(G(f, g), h_s)$ and $(G(\bar{f},\bar{g}), \bar{h}_s)$ are two $U(1)$-invariant gradient steady Ricci solitons on $E$ such that $g(0) = \bar{g}(0)$ and $\mathcal{C} = \bar{\mathcal{C}}$, then $(G(f, g), h_s) = (G(\bar{f},\bar{g}), \bar{h}_s)$.
%If $(f,g,h_s)$ and $(\hat{f}, \hat{g}, \hat{h}_s)$ are two rotationally symmetric steady gradient soliton structures with $g(0) = \hat{g}(0)$ and corresponding constants $\mathcal{C} = \hat{\mathcal{C}}$, then $(f,g,h_s)=(\hat{f}, \hat{g}, \hat{h}_s)$.
\end{thm}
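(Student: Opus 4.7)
The plan is to reduce uniqueness of the soliton metrics to the uniqueness of fixed points of $T_u$ established in the preceding theorem. Any $U(1)$-invariant gradient steady Ricci soliton $(G(f,g), h_s)$ on $E$ yields, via the change of variables (\ref{changeofvars}), a solution $(X,Y,Z,W)(t)$ of the nonlinear system (\ref{nonlin1}) satisfying $Y, W > 0$, $\cC > 0$, and $\lim_{t\to -\infty}(X,Y,Z,W) = (0,0,1,0)$; smoothness of the metric across $B_0$ additionally forces $\lim_{t\to -\infty} W/Y^2 = 1$ by Theorem \ref{recoverThm}.

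If two such solutions correspond to solitons with $\lambda = g(0) = \bar g(0)$ and $\cC = \bar{\cC}$, then Proposition \ref{constants} implies they share the same $\gamma \doteqdot \lim_{t\to -\infty}(1-Z)/Y^2 = \tfrac{1}{2}(\cC\lambda^2 + A_2)$, while the smoothness condition gives $\lim W/Y^2 = \lim \bar W/\bar Y^2 = 1$. Passing to the system (\ref{nonlin2}) via $\tilde Y = Y^2$, $\tilde Z = 1 - Z$, $\tilde t = -t$, both solitons correspond to solutions of (\ref{nonlin2}) decaying like $e^{-2\tilde t}$ as $\tilde t \to \infty$, with matching asymptotic ratios $\lim \tilde Z/\tilde Y = \gamma$ and $\lim W/\tilde Y = 1$.

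The key step is to reconcile the translation freedom $\tilde t \mapsto \tilde t + a$ in the autonomous ODE with the absence of such freedom in the geometric data. A shift by $a$ rescales the leading-order coefficients $(y_0, z_0, w_0)$ of the corresponding linear solution by the common factor $e^{-2a}$, leaving the ratios $z_0/y_0$ and $w_0/y_0$ fixed. Hence by translating one of the two solutions in $\tilde t$ if necessary, I can arrange them to share the same leading-order data $(y_0, z_0, w_0)$, so that both lie in a common ball $\overline{B}_r(u) \subset \mathbb{V}$ about the same linear reference solution $u$ of (\ref{lin2}) with $r$ small enough for the setup of the previous section. Both are then fixed points of the single operator $T_u$, and the previous uniqueness theorem for $T_u$ forces them to coincide.

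Translating back, the two solutions of (\ref{nonlin1}) agree up to a shift in $t$. To promote this to equality as metrics on $E$, I appeal to the recovery prescription of Remark \ref{recover} and Theorem \ref{recoverThm}: the identifications $s(t_0) = \int_{-\infty}^{t_0} \cL\,d\tau$ and $f(t_0) = \int_{-\infty}^{t_0} g Z W/Y\, d\tau$ are forced by $\lim_{t \to -\infty} s = 0$ and $\lim_{s \searrow 0} f = 0$, and the normalization $\lim_{t\to -\infty} g = \lambda$ (together with $\lambda = \bar\lambda$) uses up the remaining shift freedom. Consequently $(f,g,h_s)(s) \equiv (\bar f, \bar g, \bar h_s)(s)$, and $(G(f,g), h_s) = (G(\bar f, \bar g), \bar h_s)$. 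The main technical obstacle is precisely this bookkeeping of the translation freedom---confirming that after a single shift the two solutions land in the same small ball around the same linear solution $u$ so that the previously established uniqueness theorem for $T_u$ actually applies.
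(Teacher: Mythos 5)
Your proposal follows essentially the same route as the paper: realize each soliton as a fixed point of an operator $T_u$ for an associated linear solution $u$, use Proposition \ref{constants} together with $\lim W/Y^2 = 1$ to see that the two linear reference solutions have proportional leading-order data and hence differ only by a translation in $\tl{t}$, and then invoke the uniqueness of fixed points of $T_u$ before undoing the change of variables via Remark \ref{recover}. The only points you gloss over --- why a decaying solution of the nonlinear system is actually a fixed point of $T_u$ for a suitable $u$ (the paper fixes $u(0) = v(0) + \int_0^\infty \Phi(0,s)b(v(s))\,ds$ and uses ODE uniqueness), and the further forward translation needed to land in a small ball where the fixed-point uniqueness applies --- are exactly the steps handled by the paper's preceding ``strong uniqueness'' theorem, so the argument is sound.
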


\begin{proof}
For each soliton $(G(f, g), h_s)$ and $(G(\bar{f},\bar{g}), \bar{h}_s)$, perform the associated change of variables to obtain solutions $v(\tl{t}), \bar{v}(\tl{t})$ of the nonlinear ODE system (\ref{nonlin2}).
We claim that $v, \bar{v}$ are fixed points of $T_u, T_{\bar{u}}$ respectively for possibly distinct solutions $u, \bar{u}$ of the linear ODE system (\ref{lin2}).
Note that for any $u$ solving the linear ODE system (\ref{lin2}), the asymptotics of $v$ imply that $T_u v$ is well-defined, that is the integral in the definition of $T_u$ converges.
Moreover, $T_u v$ satisfies the ODE system
$$\frac{d}{dt} T_u v = A (T_u v(t)) + f(v(t))$$
and $v$ similarly solves the ODE system
$$\frac{d}{dt} v = Av(t) + f(v(t)).$$
Now fix the solution $u$ of the linear system (\ref{lin2}) with $u(0) = v(0) + \int_0^\infty \Phi(0,s) f(v(s)) ds$ so that $T_u v(0) = v(0)$, and apply uniqueness of solutions to the ODE
$$\frac{d}{dt} w (t) = A w(t) + f(v(t))$$
to deduce that $T_u v = v$.
The same argument applies to show that $\bar{v}$ is a fixed point for the operator $T_{\bar{u}}$ associated to the appropriate $\bar{u}$.

Now, since $u$ solves the linear ODE system (\ref{lin2}), $u(t) = \Phi(t,0) u(0)$.
The explicit form of $u(0)$ can be partially determined from the asymptotics of $v$ as $\tl{t} \rightarrow +\infty$.
For example,
$$ 1= \lim_{\tl{t} \rightarrow +\infty} \frac{v_4}{v_2} = \lim_{\tl{t} \rightarrow + \infty} \frac{u_4}{u_2}= \frac{u_4(0)}{u_2(0)}$$
It follows that $u$ has the form
$$u(\tl{t}) = a \left( \frac{A_2}{2d}e^{-2\tl{t}}, e^{-2\tl{t}}, \gamma e^{-2\tl{t}}, e^{-2\tl{t}} \right)$$
and similarly
$$\bar{u}(\tl{t}) = \bar{a} \left( \frac{A_2}{2d}e^{-2\tl{t}}, e^{-2\tl{t}}, \bar{\gamma} e^{-2\tl{t}}, e^{-2\tl{t}} \right)$$
where $a, \bar{a}, \gamma, $ and $\bar{\gamma}$ are positive constants.
By remark \ref{recover}, we can exactly recover $(f,g,h_s)$ and $(\bar{f}, \bar{g}, \bar{h}_s)$ from $v$ and $\bar{v}$ by taking $\lambda = \bar{\lambda} = g(0)$.
It then follows from proposition \ref{constants} that in fact $\gamma = \bar{\gamma}$.
Hence, $u$ and $\bar{u}$ differ only by a translation in $\tl{t}$, i.e. $\bar{u}(\tl{t}) = u(\tl{t} + \tl{t}_0)$.
Uniqueness of fixed points of $T_{\bar{u}}$ then implies that $\bar{v}(\tl{t}) = v(\tl{t} + \tl{t}_0)$.
It then follows that the solitons $(G(f,g),h_s)$ and $(G(\bar{f}, \bar{g}), \bar{h}_s)$ recovered from $v$ and $\bar{v}$ by remark \ref{recover} are in fact identical.
%more justification for this last line?
\end{proof}

The above theorem shows not only that the Ricci soliton constructed in the proof of corollary \ref{mainCor} is unique given the choice of $\cC_0$ and $\lambda_0$ but also that any gradient steady Ricci soliton of the form $(G(f,g), h_s)$ on $E$ arises from that construction.
In particular, our construction recovers gradient steady Ricci solitons constructed by Ivey ~\cite{Ivey94}, Cao ~\cite{Cao96}, and Dancer-Wang ~\cite{DancerWang08}.

Recall that $\cC$ is the value of the maximum scalar curvature and $g(s)$ is the coefficient of the $\check{g}$-factor of the metric $G(f,g)$ on $E$.
Therefore, the theorem above states that $U(1)$-invariant gradient steady soliton metrics $(G(f,g), h_s)$ on $E$ are uniquely classified by their maximum scalar curvature and volume of the image of the zero section $B_0 \subset E$.

%put in how uniqueness argument implies that we recover the solutions due to Ivey
%can put in remark about how uniqueness holds for some 'non-gradient' solitons too

\begin{example}
When $B = \mathbb{CP}^{\frac{d}{2}}$ is complex projective space with the Fubini-Study metric, all complex line bundles over $\mathbb{CP}^{\frac{d}{2}}$ have first Chern class a multiple of $c_1(\mathbb{CP}^{\frac{d}{2}}) \in H^2(\mathbb{CP}^{\frac{d}{2}}, \mathbb{R})$.
Hence, theorem \ref{uniqueness} gives a classification result for smooth $U(1)$-invariant steady Ricci solitons on complex line bundles $E$ over complex projective space $\mathbb{CP}^{\frac{d}{2}}.$

When $E$ is the canonical line bundle, the uniqueness result shows that Cao's steady soliton ~\cite{Cao96} appears in this one-parameter family of Ricci solitons.
In fact, the one-parameter family on the canonical line bundle coincides with a collection of generalizations of Cao's steady soliton constructed by Dancer and Wang ~\cite{DancerWang08} (see remark \ref{shortproof} for more details).
In general, the soltions in this one-parameter family have at least $U(1)$ symmetry and Cao's steady soliton is the unique element with $U(\frac{d+2}{2})$ symmetry. %get a reference for the uniqueness statement here
\end{example}

%%%%%%%%%%%%%%%%%%%%%%%%%%%%%%%%%%%%%%%%%%%%%%%%%%%%%%%%%%%%%%%%%%%%%%
\section{Completeness}
%%%%%%%%%%%%%%%%%%%%%%%%%%%%%%%%%%%%%%%%%%%%%%%%%%%%%%%%%%%%%%%%%%%%%%
To finish the proof of theorem \ref{mainthm}, it remains to check that a suitable subset of the solitons constructed in corollary \ref{mainCor} are complete.
To simplify the exposition, we shall henceforth assume that $q \ne 0$ and appeal to remark \ref{IveyCase} for the case of $q = 0$.

\begin{lem} \label{lemmasign}
Assume $(X,Y,Z,W)(t)$ is a solution of the nonlinear ODE system (\ref{nonlin1}) defined on $(-\infty, t_{max})$ such that 
\begin{enumerate}[(i)]
\item $Y, W>0$,
\item $\lim_{t \searrow -\infty} (X,Y,Z,W)(t) = (0,0,1,0)$, and
\item $\cC>0$ in the first integral equation (\ref{integraleqn}).
\end{enumerate}
Then negativity of $X(t)$ and $(Z-X)(t)$ are forward invariant in $t$, that is,
$X(t_0) < 0$ for some $t_0 \in (-\infty, t_{max})$ implies $X(t) < 0$ for all $t > t_0$ and similarly for $Z-X$.

In particular, $\lim_{t \nearrow t_{max}} g(t)$ and $\lim_{t \nearrow t_{max}} \frac{W(t)}{Y(t)}$ exist (in $\mathbb{R} \cup \{ \pm \infty \}$).
\end{lem}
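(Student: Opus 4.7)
The plan is to prove each forward-invariance claim by contradiction, locating a pair of consecutive zeros of the relevant quantity and exploiting the one-sided monotonicity of $W/Y$ on the interval between them. The key identity, from the ODE system, is
\[ \frac{d}{dt} \ln\!\left(\frac{W}{Y}\right) = \frac{W_t}{W} - \frac{Y_t}{Y} = Z - X, \]
so the sign of $Z-X$ controls whether $W/Y$ grows or decays.

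\emph{Forward invariance of $X<0$.} Assume $X(t_0)<0$ and, for contradiction, let $t_1 \in (t_0, t_{max})$ be the smallest subsequent zero of $X$. By Proposition \ref{lestimates}, $X \ge 0$ for $t \ll -1$, so there exists a largest $t^\star < t_0$ with $X(t^\star)=0$, and then $X<0$ on $(t^\star, t_1)$. The one-sided crossing directions force $X_t(t^\star) \le 0 \le X_t(t_1)$, and the ODE gives $X_t\big|_{X=0} = \tfrac{A_2}{d} Y^2 - \tfrac{2A_3}{d} W^2$, hence
\[
\left(\frac{W}{Y}\right)^{\!2}\!(t^\star) \;\ge\; \frac{A_2}{2A_3} \;\ge\; \left(\frac{W}{Y}\right)^{\!2}\!(t_1).
\]
But on $(t^\star, t_1)$ we have $X<0$ and $Z>0$ (the latter from the Gronwall argument in Proposition \ref{monotone} applied to $Z_t \ge Z(dX^2+Z^2-1)$), so $Z-X>0$ there, and the identity above forces $W/Y$ to be strictly increasing on $(t^\star, t_1)$. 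This contradicts the displayed pair of inequalities.

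\emph{Forward invariance of $Z-X<0$.} A direct computation gives
\[
(Z-X)_t = (Z-X)(dX^2+Z^2-1) + \tfrac{(d+2)A_3}{d} W^2 - \tfrac{A_2}{d} Y^2,
\]
so $(Z-X)_t\big|_{Z=X} = \tfrac{(d+2)A_3}{d} W^2 - \tfrac{A_2}{d} Y^2$. Using $\lim_{t\to-\infty}(Z-X)=1>0$ together with $(Z-X)(t_0)<0$ to locate adjacent zeros $t^\star < t_0 < t_1$ with $Z-X<0$ on $(t^\star, t_1)$, the same crossing-direction argument yields
\[
\left(\frac{W}{Y}\right)^{\!2}\!(t^\star) \;\le\; \frac{A_2}{(d+2)A_3} \;\le\; \left(\frac{W}{Y}\right)^{\!2}\!(t_1),
\]
while $Z-X<0$ on $(t^\star, t_1)$ forces $W/Y$ to be strictly decreasing there. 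Contradiction again. For the final assertion, since $(\ln g)_t = X$ and $(\ln(W/Y))_t = Z-X$, the two invariance results just proved imply that each of $X$ and $Z-X$ is eventually of one sign as $t \nearrow t_{max}$, so $g$ and $W/Y$ are eventually monotone and positive, and therefore possess limits in $[0, +\infty]$.

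The main obstacle is recognizing that $W/Y$ is the right secondary quantity. A naive attempt to examine $X_t$ at a crossing $X=0$ using only the ODE or the first integral (\ref{integraleqn}) does not directly produce a sign, since $A_2 Y^2 - 2A_3 W^2$ has no a priori sign along the flow. The argument works because $Z-X$ plays a double role: it appears (up to a factor) in the transverse derivatives at the crossing sets $\{X=0\}$ and $\{Z=X\}$, and it is exactly the logarithmic derivative of $W/Y$, so that compatibility between the crossing inequalities and the interior monotonicity of $W/Y$ can be tested against each other.
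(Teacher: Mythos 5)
Your proof is correct and follows essentially the same route as the paper's: both arguments compare the sign of $\frac{A_2}{d}Y^2 - \tfrac{2A_3}{d}W^2$ (resp.\ $\tfrac{(d+2)A_3}{d}W^2 - \tfrac{A_2}{d}Y^2$) at consecutive zero crossings of $X$ (resp.\ $Z-X$) against the strict monotonicity of $W/Y$ forced by $\frac{d}{dt}\frac{W}{Y} = \frac{W}{Y}(Z-X)$ on the intervening interval, and deduce the final limits from the resulting eventual monotonicity of $g$ and $W/Y$. (Your inequality $\bigl(\tfrac{W}{Y}\bigr)^2(t_1) \le \tfrac{A_2}{2A_3}$ at the exit crossing is the correct direction; the paper's displayed $\ge$ there appears to be a typo.)
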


\begin{proof}
Suppose that $X$ is negative for some $t_0$.
Since $X \ge 0$ for $t \ll-1$, it follows that $t_* \doteqdot \inf \lbrace t \in \mathbb{R} | X(t) < 0 \rbrace$ is finite.
At $t_*$, $$X(t_*) = 0 \text{ and } X_t(t_*) \le 0 \implies \sqrt{\frac{A_2}{2A_3}} \le \frac{W}{Y}(t_*)$$
since $Y, W > 0$.
Now, negativity of $X$ persists (i.e. $X(t) < 0$ for all $t > t_*$).
Else, $t^* = \inf \{ t > t_* | X(t) \ge 0 \}$ exists and is finite.
At $t^*>t_0>t_*$,
$$X(t^*) = 0 \text{ and } X_t(t^*) \ge 0 \implies \frac{W}{Y}(t^*) \ge \sqrt{\frac{A_2}{2A_3}}.$$
Thus, the mean value theorem implies $\frac{d}{dt} \frac{W}{Y} \le 0$ for some $t \in (t_*, t^*)$, but $$\frac{d}{dt} \frac{W}{Y} = \frac{W}{Y} (Z - X)$$
is strictly positive for all $t \in (t_*, t^*)$.
Therefore, negativity of $X$ persists.

The proof for $Z-X$ is similar.
Namely, suppose that $(Z-X)(t_0) < 0$ for some $t_0$.
Then consider the non-empty open set $\{ t \in \mathbb{R} | (Z-X)(t) < 0 \}$.
Since $Z-X$ limits to $1$ as $t \rightarrow -\infty$, $t_* \doteqdot \inf \{t \in \mathbb{R} | (Z-X)(t) < 0 \}$ is finite.
At $t_*$,
$$(Z-X)(t_*) =0  \text{ and } (Z-X)_t (t_*) \le 0.$$
Thus, at $t_*$
$$(Z-X)_t(t_*) = A_3 W(t_*)^2 (1+\frac{2}{d}) - \frac{A_2}{d} Y(t_*)^2 \le 0$$
$$\implies \frac{W(t_*)^2}{Y(t_*)^2} \le \frac{A_2}{d}\left(A_3 + 2\frac{A_3}{d}\right)^{-1}.$$
Let $t^* \doteqdot \inf \{ t >t_* \in \mathbb{R} | (Z-X)(t) \ge 0 \}$
which a priori may be $+\infty$.
In fact, if $t^*$ is finite, then at $t^*$,
$$(Z-X)(t^*) =0  \text{ and } (Z-X)_t (t^*) \ge 0$$
$$\implies \frac{W(t^*)^2}{Y(t^*)^2} \ge \frac{A_2}{d}\left(A_3 + 2\frac{A_3}{d}\right)^{-1}.$$
Hence, the mean value theorem implies there exists $t_* < t < t^*$ such that $\frac{d}{dt} \frac{W^2}{Y^2} (t)$ is nonnegative.
However, $\frac{d}{dt} \frac{W^2}{Y^2} = 2 \frac{W^2}{Y^2}(Z-X) < 0$ for all $t_* < t < t^*$.
This contradiction indicates that negativity of $Z-X$ persists for all future time.

The final statement follows from observing that $\frac{dg}{dt} = g X$ and $\frac{d}{dt} \frac{W}{Y} = \frac{W}{Y} ( Z-X)$ implies that $g$ and $\frac{W}{Y}$ are monotonic for $t$ sufficiently large because $X$ and $Z-X$ are each either always nonnegative or negative for large $t$.
\end{proof}

%this next part had a mistake as Wink pointed out so we're commenting it out and replacing it 
%%%%%%%%%%%%%%%%%%%%%%%%%%%%%%%%%%%%%%%%%%%%%%%%%%%%%%%%%%%%%%%%%%%%%% 
\comment{
\begin{thm} \label{complete}
Assume $(X,Y,Z,W)(t)$ is a solution of the nonlinear ODE system (\ref{nonlin1}) such that 
\begin{enumerate}[(i)]
\item $Y, W>0$,
\item $\lim_{t \searrow -\infty} (X,Y,Z,W)(t) = (0,0,1,0)$, and
\item $\cC>0$ in the first integral equation (\ref{integraleqn}).
\end{enumerate}
If we take $\lambda > 0$, %where do we use this?
then the solution $(X,Y,Z,W)(t)$ exists for all $t \in \mathbb{R}$ and $(X, Y,Z,W)(t) \rightarrow (0,0,0,0)$ as $t \rightarrow \infty$.

Moreover, $\cL(t) \rightarrow \frac{1}{\sqrt{\cC}}$ as $t \rightarrow +\infty$.
\end{thm}

\begin{proof}
Let $(-\infty, t_{max})$ denote the maximal interval of existence for the solution $(X,Y,Z,W)(t)$.
Recall that, by propositions \ref{lestimates} and \ref{monotone}, $\cL$ is strictly increasing and $\lim_{t \rightarrow -\infty} \cL(t) = 0$.
Thus, $\lim_{t \nearrow t_{max}} \cL(t)$ exists and is either $+\infty$ or a finite positive constant $\alpha > 0$.
%make sure to specify the extra conditions that guarantee alpha>0

If $\lim_{t \nearrow t_{max}} \cL(t) = \infty$, then the first integral equation (\ref{integraleqn}) implies $$\lim_{t \nearrow t_{max}} |W(t)| = \infty .$$
To deduce the possible behavior as $(X,Y,Z,W) \rightarrow \infty$, perform following the change of variables:
\begin{align*}
\overline{X} & = \frac{X}{W} & \overline{Y} & = \frac{Y}{W}\\
\overline{Z} & = \frac{Z}{W} & \overline{W} & =\frac{1}{W}
\end{align*}
and introduce a new independent variable $\ol{t}$ satisfying $d\ol{t} = W dt$.
Under this change of variables, the ODE system (\ref{nonlin1}) then becomes
\begin{equation} \label{nonlinOL}
\left\{ \begin{array}{ccl}
\ol{X}_{\ol{t}} & = & \ol{X}(2\ol{X}-\ol{W}-\ol{Z})+\frac{A_2}{d} \ol{Y}^2 + \frac{A_2-2A_3}{d}\\
\ol{Y}_{\ol{t}} & = & \ol{Y}(\ol{X}-\ol{Z})\\
\ol{Z}_{\ol{t}} & = & \ol{Z}(2\ol{X}-\ol{W}-\ol{Z}) + A_3\\
\ol{W}_{\ol{t}} & = & \ol{W}(2\ol{X}-\ol{Z}) -d\ol{X}^2-\ol{Z}^2
\end{array} \right.
\end{equation}
and the first integral equation (\ref{integraleqn}) becomes $$d\ol{X}^2 + A_2 \ol{Y}^2+\ol{Z}^2-A_3 = \ol{W}^2 - \cC \cL^2 \ol{W}^2 .$$
It follows that
\begin{equation*}
\begin{aligned}
&\lim_{t \nearrow t_{max}} |W(t)| = \infty  \implies \lim_{\ol{t} \nearrow \ol{t}_{max}} \ol{W}(\ol{t})  = 0 \text{, and} \\
&d\ol{X}^2 + A_2 \ol{Y}^2 +\ol{Z}^2 = A_3 + \ol{W}^2(1- \cC \cL^2) \le A_3 \text{ for sufficiently large } \ol{t} \le \ol{t}_{max}.
\end{aligned}
\end{equation*}
In particular, $\ol{X}, \ol{Y}, \ol{Z}, \ol{W}$ remain bounded as $\ol{t} \nearrow \ol{t}_{max}$ and so it follows that \\ $\ol{t}_{max} = \infty$.
Additionally,
$$\lim_{\ol{t} \rightarrow \infty} \ol{Y} = \lim_{t \nearrow t_{max}} \frac{Y(t)}{W(t)} \text{ and } \lim_{\ol{t} \rightarrow \infty} \ol{W}^2 \cL^2 = \lim_{t \nearrow t_{max}} g(t)^2 \frac{Y(t)^2}{W(t)^2}$$ exist by lemma \ref{lemmasign}.
Hence,
$$ \lim_{\ol{t} \rightarrow \infty} d\ol{X}^2 +\ol{Z}^2 = \lim_{\ol{t} \rightarrow \infty} A_3 - A_2 \ol{Y}^2 + \ol{W}^2 - \ol{W}^2 \cC \cL^2 \text{ exists.}$$

Now,
\begin{align*}
\lim_{\ol{t} \rightarrow \infty} \ol{W}_{\ol{t}} &= \lim_{\ol{t} \rightarrow \infty} \ol{W}(2\ol{X}-\ol{Z}) -d\ol{X}^2-\ol{Z}^2  \text{ exists}\\
&\implies \lim_{\ol{t} \rightarrow \infty} \ol{W}_{\ol{t}} = 0\\
&\implies \lim_{\ol{t} \rightarrow \infty} \ol{X} = \lim_{\ol{t} \rightarrow \infty} \ol{Z} = 0\\
&\implies \lim_{\ol{t} \rightarrow \infty} \ol{Z}_{\ol{t}}  = A_3 \ne 0 \text{, a contradiction.}
\end{align*}
Therefore, $\lim_{t \nearrow t_{max}} \cL(t) = \alpha < \infty$.

If the maximal existence time $t_{max}$ is finite, then it must be the case that, as $t \nearrow t_{max}$, $(X,Y,Z,W)$ tends to $\infty$ in some way such that $dX^2 + A_2Y^2 + Z^2-A_3W^2$ tends to 
$1 - \cC\alpha^2$.
In particular, $\lim_{t \nearrow t_{max}} |W(t)| = \infty$.
By similar logic as in the previous case, we obtain a contradiction.
Therefore, $t_{max} = \infty$ and $\lim_{t \rightarrow \infty} \cL = \alpha$.

Because the limits of $\cL, g$ and $\frac{W}{Y}$ as $t \rightarrow \infty$ exist, so do the limits of $Y$ and $W$ as $t \rightarrow \infty$.
The first integral equation (\ref{integraleqn}) then implies that $\lim_{t \rightarrow \infty} d X^2 + Z^2$ exists.
Hence, $\lim_{t \rightarrow \infty} \cL_t = \lim_{t \rightarrow \infty} \cL(dX^2 + Z^2)  = 0$, which implies that $$\lim_{t \rightarrow \infty} X = \lim_{t \rightarrow \infty} Z = 0.$$
Now,
\begin{equation*}
\begin{aligned}
\lim_{t \rightarrow \infty} Z_t = &\lim_{t \rightarrow \infty} Z(dX^2+Z^2 -1) + A_3W^2 \text{ exists}\\
\implies &\lim_{t \rightarrow \infty} Z(dX^2+Z^2 -1) + A_3W^2 = 0\\
\implies &\lim_{t \rightarrow \infty} W = 0\\
\end{aligned}
\end{equation*}
A similar argument applied to $X_t$ shows that $\lim_{t \rightarrow \infty} Y = 0$.
Finally, the claim about the asymptotic behavior of $\cL(t)$ follows immediately from taking the limit of the first integral equation (\ref{integraleqn}) as $t \rightarrow +\infty$.
\end{proof}

Because the Ricci solitons constructed in corollary \ref{mainCor} satisfy the assumptions of the previous theorem and $s(t) = \int_{-\infty}^t \cL(\tau) d\tau$, it immediately follows that the Ricci solitons constructed in corollary \ref{mainCor} are complete.
This completes the proof of theorem \ref{mainthm}.
}
%%%%%%%%%%%%%%%%%%%%%%%%%%%%%%%%%%%%%%%%%%%%%%%%%%%%%%%%%%%%%%%%%%%%%% end of commented section, now inserting fix

\begin{prop}
Assume $(X,Y,Z,W)(t)$ is a solution of the nonlinear system (\ref{nonlin1}) defined on $(-\infty, t_{max})$ such that
\begin{enumerate}[(i)]
\item $Y,W> 0$, 
\item $\lim_{t\rightarrow - \infty} (X,Y,Z,W)(t) = (0,0,1,0)$, and
\item $\mathcal{C}>0$ in the first integral equation (\ref{integraleqn}).
\end{enumerate}
	Then $$(dX + Z)(t) \le 1 \qquad \forall t \in (-\infty, t_{max})$$
\end{prop}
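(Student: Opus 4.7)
The plan is to derive a clean linear ODE for the quantity $\phi(t) \doteqdot dX(t) + Z(t) - 1$, show that $\phi$ tends to $0$ as $t \to -\infty$, and then apply a Gronwall / integrating factor argument to conclude that $\phi \le 0$ throughout the interval of existence.

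First, I would compute $\phi_t$ by combining the $X$ and $Z$ equations of (\ref{nonlin1}):
\begin{equation*}
\phi_t = dX_t + Z_t = (dX + Z)(dX^2 + Z^2 - 1) + A_2 Y^2 - A_3 W^2.
\end{equation*}
The inhomogeneous term $A_2 Y^2 - A_3 W^2$ is exactly the combination that the first integral (\ref{integraleqn}) controls: substituting $A_2 Y^2 - A_3 W^2 = 1 - \mathcal{C}\mathcal{L}^2 - dX^2 - Z^2$ and rearranging yields the key identity
\begin{equation*}
\phi_t = (dX^2 + Z^2 - 1)\,\phi \;-\; \mathcal{C}\mathcal{L}^2.
\end{equation*}
This is the main computation; everything after it is soft.

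Next I would introduce the integrating factor. Since $dX^2 + Z^2 - 1 = A_3 W^2 - A_2 Y^2 - \mathcal{C}\mathcal{L}^2$, the asymptotic estimates of Proposition \ref{lestimates} give $|dX^2 + Z^2 - 1| \lesssim e^{2t}$ as $t \to -\infty$, so
\begin{equation*}
I(t) \doteqdot \int_{-\infty}^{t}\bigl(dX^2(\tau) + Z^2(\tau) - 1\bigr)\,d\tau
\end{equation*}
is finite and well-defined for every $t \in (-\infty, t_{max})$. Setting $\psi(t) \doteqdot e^{-I(t)}\phi(t)$, the ODE becomes
\begin{equation*}
\psi_t = -\mathcal{C}\,e^{-I(t)}\,\mathcal{L}^2(t) \;\le\; 0,
\end{equation*}
so $\psi$ is non-increasing on $(-\infty, t_{max})$.

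Finally, since $(X,Z)(t) \to (0,1)$ and $I(t) \to 0$ as $t \to -\infty$, we have $\psi(t) \to 0$ as $t \to -\infty$. Monotonicity then forces $\psi(t) \le 0$ for all $t$, and therefore $\phi(t) = e^{I(t)}\psi(t) \le 0$, which is exactly $(dX + Z)(t) \le 1$. I don't expect a serious obstacle: the only subtlety is justifying the convergence of $I(t)$ at $-\infty$, and this is immediate from the exponential decay estimates already established in Proposition \ref{lestimates}. One could equivalently run a contradiction argument — if $\phi(t_0) > 0$ for some $t_0$ then continuity plus $\phi(-\infty) = 0$ produces a first time $t_1 \le t_0$ with $\phi(t_1) = 0$, at which $\phi_t(t_1) = -\mathcal{C}\mathcal{L}^2(t_1) < 0$, contradicting $\phi > 0$ just after $t_1$ — but the integrating factor formulation feels cleaner.
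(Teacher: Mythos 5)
Your proposal is correct and follows essentially the same route as the paper: the key step in both is the identity $\frac{d}{dt}(dX+Z-1) = (dX+Z-1)(dX^2+Z^2-1) - \mathcal{C}\mathcal{L}^2$ obtained by substituting the first integral equation, after which the conclusion follows from $dX+Z-1 \to 0$ as $t \to -\infty$. The paper finishes with a two-step argument (sign of $dX^2+Z^2-1$ near $-\infty$ plus forward invariance) rather than your global integrating factor, but this is only a cosmetic difference, and your justification that $I(t)$ converges at $-\infty$ via Proposition \ref{lestimates} is sound.
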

\begin{proof}
	Observe that the first integral equation (\ref{integraleqn}) implies
	\begin{equation*} \begin{aligned}
		\frac{d}{dt} (dX + Z -1 ) =& (dX + Z-1)(dX^2 + Z^2 - 1) + dX^2 + Z^2 - 1+ A_2 Y^2 - A_3 W^2\\
		=& (dX + Z-1)(dX^2 + Z^2 - 1) - \mathcal{C} \cL^2\\
	\end{aligned} \end{equation*}
	Recall from the proof of proposition \ref{lestimates} that
		$$\lim_{t \to -\infty} dX + Z - 1 = 0 \qquad \text{ and } \qquad dX^2 + Z^2 - 1 \le 0 \quad \forall t \ll -1$$
	It then follows from the above differential equation
	 that $dX + Z \le 1$ for $t \ll -1$.
	The differential equation for $dX+Z - 1$ shows moreover that this condition is preserved for all $t$.
\end{proof}

Recall from section 3 that solutions $(X,Y,Z,W)(t)$ of (\ref{nonlin1}) satisfying $(i-iii)$ as above and $\lim_{t \to -\infty} \frac{W}{Y^2}(t) = 1$ are uniquely parametrized modulo translation in $t$ by
	$$ \mathcal{C} \lambda^2 = 2 \left( \lim_{t \to -\infty} \frac{ 1- Z}{Y^2} \right) - A_2$$

\begin{thm} \label{mainCthm}
	For all $d \ge 2$ and $q \ne 0$, there exists $\Lambda_0 = \Lambda_0(q,d) > 0$ such that
	if $(X,Y,Z,W)(t)$ is a solution of the nonlinear system (\ref{nonlin1}) defined on $(-\infty, t_{max})$ satisfying
\begin{enumerate}[(i)]
\item $Y,W> 0$, 
\item $\lim_{t\rightarrow - \infty} (X,Y,Z,W)(t) = (0,0,1,0)$, 
\item $\mathcal{C}>0$ in the first integral equation (\ref{integraleqn}),
\item $\lim_{t \to -\infty} \frac{W}{Y^2} = 1$, and
\item $\mathcal{C} \lambda^2 \ge \Lambda_0$ 
\end{enumerate}
	and $\lambda > 0$ then
		$$\frac{W^2}{Y^2}(t) \le \frac{A_2}{A_3 (d+2)} \qquad \forall  t \in (-\infty, t_{max})$$		
\end{thm}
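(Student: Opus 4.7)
My plan is to argue by contradiction. Let $\mu := W^2/Y^2$ and $\mu_0 := A_2/(A_3(d+2))$. From system (\ref{nonlin1}) one computes $\mu_t = 2\mu(Z-X)$, so the sign of $Z-X$ governs the monotonicity of $\mu$. Since $\mu \to 0$ as $t \to -\infty$ (by Proposition \ref{lestimates}), if $\mu$ ever exceeds $\mu_0$ there must be a first crossing time $t_\ast \in (-\infty,t_{max})$ with $\mu(t_\ast) = \mu_0$ and $\mu(t) < \mu_0$ for $t < t_\ast$; differentiating forces $Z(t_\ast) \ge X(t_\ast)$. Using that $\{X<0\}$ and $\{Z-X<0\}$ are each forward invariant (Lemma \ref{lemmasign}), together with the fact that $\mu$ would be strictly decreasing after any time at which $Z-X$ turns negative, I would conclude $X \ge 0$ and $Z - X \ge 0$ throughout $(-\infty, t_\ast]$. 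In particular, $g_t = gX \ge 0$ on this interval, so $g(t_\ast) \ge \lambda$ and $\mathcal{L}^2(t_\ast) \ge \lambda^2 Y^2(t_\ast)$.

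Combining $dX + Z \le 1$ (the preceding proposition) with $X \ge 0$ yields $dX^2 \le X(1-Z)$ and hence $dX^2 + Z^2 - 1 \le -(1-Z)(1+Z-X) \le 0$ on $(-\infty, t_\ast]$. Substituting $A_3W^2(t_\ast) = A_2Y^2(t_\ast)/(d+2)$ into the first integral (\ref{integraleqn}) at $t_\ast$ and using the lower bound on $\mathcal{L}^2(t_\ast)$ gives
\[
Y^2(t_\ast)\Big[\mathcal{C}\lambda^2 + \tfrac{(d+1)A_2}{d+2}\Big] \;\le\; 1 - dX^2(t_\ast) - Z^2(t_\ast) \;\le\; 1,
\]
so $Y^2(t_\ast) \le 1/\Lambda_0$ is small whenever $\mathcal{C}\lambda^2 \ge \Lambda_0$. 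Separately, writing the $(Z-X)$ evolution as
\[
(Z-X)_t = (Z-X)(dX^2+Z^2-1) + \tfrac{A_3(d+2)}{d}Y^2(\mu - \mu_0),
\]
both terms are non-positive on $(-\infty, t_\ast]$, so $Z-X$ is monotonically non-increasing there. Integrating from $-\infty$ (where $Z - X \to 1$) to $t_\ast$ (where $Z - X \ge 0$) yields the $\Lambda_0$-independent estimate
\[
\tfrac{A_3(d+2)}{d}\int_{-\infty}^{t_\ast} Y^2(\mu_0 - \mu)\, dt \;\le\; 1.
\]

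To close the contradiction I would combine this with the asymptotic relation from Proposition \ref{constants}, namely $(1-(Z-X))/Y^2 \to (\mathcal{C}\lambda^2 + A_2)/2 + A_2/(2d)$ as $t \to -\infty$. This says that near $-\infty$ the downward drift of $Z-X$ is of order $\mathcal{C}\lambda^2 \cdot Y^2$, while the upward drift of $\mu$ is only of order $Y^2$ (with $\mathcal{C}\lambda^2$-independent coefficient), so for $\mathcal{C}\lambda^2 \ge \Lambda_0$ large this ``race'' is decisively won by $Z-X$: it must reach $0$ strictly before $\mu$ has had a chance to climb to $\mu_0$, contradicting the first-crossing assumption.

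The main obstacle will be making this race argument quantitative beyond the linear regime at $-\infty$: the asymptotic gives only infinitesimal information, while $t_\ast$ lives in the nonlinear region. I expect to handle this via a bootstrap using the inequalities already in hand ($dX+Z \le 1$, $X\ge 0$, monotonicity of $Z-X$ and $g$, and the small-$Y$ bound $Y^2(t_\ast) \le 1/\Lambda_0$) to control the nonlinear corrections, comparing the full trajectory $(X,Y,Z,W)(t)$ on $(-\infty, t_\ast]$ with its linearization about $(0,0,1,0)$ and tracking the integrated effect of the $\mathcal{C}\lambda^2 Y^2$ drift on $Z-X$ up to time $t_\ast$.
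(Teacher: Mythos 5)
Your setup is sound and your preliminary inequalities are all correct and in fact mirror the paper's own preparatory lemmas: the first-crossing time, the forward invariance from Lemma \ref{lemmasign} giving $X \ge 0$ and $Z - X \ge 0$ up to the crossing, the bound $dX + Z \le 1$, the identity $(Z-X)_t = (Z-X)(dX^2+Z^2-1) + \tfrac{A_3(d+2)}{d}Y^2(\mu-\mu_0)$, and the smallness $Y^2(t_*) \le 1/\Lambda_0$ all check out. But the proof is not closed: the entire content of the theorem lives in the ``race argument,'' and you explicitly defer it to an unspecified bootstrap. The specific levers you propose do not obviously work. The integral inequality $\tfrac{A_3(d+2)}{d}\int_{-\infty}^{t_*} Y^2(\mu_0-\mu)\,dt \le 1$ is satisfied by every solution regardless of $\mathcal{C}\lambda^2$ and there is no visible reason its left side must grow with $\Lambda_0$ (both $Y^2$ and $\mu_0 - \mu$ can be small where it matters). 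The heuristic ``$Z-X$ drops at rate $\mathcal{C}\lambda^2 Y^2$ while $\mu$ climbs at rate $Y^2$'' is only valid in the linear regime where $\mu \approx Y^2$; one can check that $\mu/Y^2$ is nondecreasing on $(-\infty,t_*]$ and in fact reaches at least $\mu_0\Lambda_0$ at $t_*$, so in the nonlinear regime $\mu_t = 2\mu(Z-X)$ is \emph{not} of order $Y^2$ and the race, as stated, is inconclusive exactly where the crossing happens.

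The paper closes this gap with a device absent from your proposal: it reparametrizes time by $d\ol{t} = Y\,dt$ and rescales all variables by $1/Y$, so that $\ol{W} = W/Y$ satisfies the damped second-order equation $\ol{W}_{\ol{t}\ol{t}} = \ol{W}_{\ol{t}}(\ol{Z}-\ol{Y}) + \tfrac{A_3(d+2)}{d}\ol{W}\bigl(\ol{W}^2 - \tfrac{A_2}{A_3(d+2)}\bigr)$ with $\ol{W}(0)=0$, $\ol{W}_{\ol{t}}(0)=1$. A Riccati comparison, $\tfrac{d}{d\ol{t}}(\ol{Z}-\ol{Y}) \le -\tfrac12\mathcal{C}\lambda^2 + \tfrac32(\ol{Z}-\ol{Y})^2$, shows the damping coefficient satisfies $\ol{Z}-\ol{Y} \le -\sqrt{\mathcal{C}\lambda^2/3}\,\tanh\bigl(\tfrac{\sqrt{3\mathcal{C}\lambda^2}}{2}\ol{t}\bigr)$; since the cubic term is nonpositive before the crossing, two integrations give the uniform bound $\ol{W} \le 2^{2/3}\sqrt{3/(\mathcal{C}\lambda^2)}$, which is below $\sqrt{A_2/(A_3(d+2))}$ once $\mathcal{C}\lambda^2 \ge \Lambda_0$ --- the contradiction. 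This quantitative mechanism (rescaling plus the $\tanh$ damping estimate) is the essential missing idea; without it, or a genuine substitute, your argument does not yield the theorem.
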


This theorem completes the proof of theorem \ref{mainthm} as the following corollary shows.

\begin{cor} \label{mainCcor}
	Let $(X,Y,Z,W)(t)$ satisfy the assumptions of the previous theorem.
	If $\lambda >0$, then $t_{max} = +\infty$, $(X,Y,Z,W)(t) \to (0,0,0,0)$ as $t \to +\infty$, and $\mathcal{L} \to \frac{1}{\sqrt{ \mathcal{C}}}$ as $t \to +\infty$.
	
	In particular, the Ricci solitons constructed in corollary \ref{mainCor} with $g(0)= \lambda$ and maximum scalar curvature $\mathcal{C}$ are complete if $\mathcal{C} \lambda^2$ is sufficiently large.
\end{cor}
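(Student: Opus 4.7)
The plan is to leverage the bound $W^2/Y^2 \le A_2/(A_3(d+2))$ from Theorem \ref{mainCthm} together with the first integral equation (\ref{integraleqn}) to trap $(X,Y,Z,W)$ in a uniformly bounded region on $(-\infty, t_{max})$, and then apply Barbalat's lemma iteratively to extract the limits at $+\infty$. Substituting $A_3 W^2 \le \frac{A_2}{d+2} Y^2$ into (\ref{integraleqn}) yields
\begin{equation*}
dX^2 + \tfrac{(d+1)A_2}{d+2}\,Y^2 + Z^2 \;\le\; 1 - \mathcal{C}\mathcal{L}^2.
\end{equation*}
Since the left-hand side is nonnegative, this simultaneously forces $\mathcal{L}(t) \le 1/\sqrt{\mathcal{C}}$ and uniform bounds on $|X|, Y, |Z|$; the bound on $W$ then follows from $W \le \sqrt{A_2/(A_3(d+2))}\,Y$. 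Standard ODE continuation therefore gives $t_{max} = +\infty$. By Proposition \ref{monotone}, $\mathcal{L}$ is strictly increasing and bounded above by $1/\sqrt{\mathcal{C}}$, so $\mathcal{L}_\infty \doteqdot \lim_{t \to +\infty} \mathcal{L}(t)$ exists in $(0, 1/\sqrt{\mathcal{C}}]$.

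Next I would run Barbalat's lemma three times. From $\frac{d}{dt} \log \mathcal{L} = dX^2 + Z^2$, the integral $\int_0^\infty (dX^2 + Z^2)\,dt = \log(\mathcal{L}_\infty/\mathcal{L}(0))$ is finite, and the integrand has bounded $t$-derivative because all of $X,Y,Z,W$ and their $t$-derivatives are polynomial expressions in these bounded quantities. Hence $dX^2 + Z^2 \to 0$, so $X, Z \to 0$. Barbalat applied to $Z$ (uniformly continuous $Z_t$ by the same boundedness argument) gives $Z_t \to 0$; the $Z$-equation of (\ref{nonlin1}) then reads $A_3 W^2 = Z_t - Z(dX^2 + Z^2 - 1)$, whose right-hand side tends to $0$, so $W \to 0$. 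Similarly, Barbalat applied to $X$ gives $X_t \to 0$, and the $X$-equation forces $\frac{A_2}{d} Y^2 - \frac{2A_3}{d} W^2 \to 0$; combined with $W \to 0$ this yields $Y \to 0$.

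Passing to the limit in (\ref{integraleqn}) now gives $0 = 1 - \mathcal{C}\mathcal{L}_\infty^2$, i.e.\ $\mathcal{L}_\infty = 1/\sqrt{\mathcal{C}}$. Completeness of the Ricci soliton then follows because $s(t) = \int_{-\infty}^{t} \mathcal{L}(\tau)\,d\tau$ together with $\mathcal{L}(t) \to 1/\sqrt{\mathcal{C}} > 0$ force $s(t) \to +\infty$ as $t \to +\infty = t_{max}$. The main obstacle was essentially already resolved by Theorem \ref{mainCthm}: without the $W^2/Y^2$ bound the first integral equation alone does not confine the solution, and the earlier commented-out proof had to rule out finite-time blowup via a rescaling change of variables. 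Here the confinement is immediate, and the rest of the argument is a routine application of Barbalat's lemma.
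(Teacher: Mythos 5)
Your proof is correct, and it follows the same overall strategy as the paper --- use the bound $W^2/Y^2 \le A_2/(A_3(d+2))$ from theorem \ref{mainCthm} together with the first integral (\ref{integraleqn}) to confine the trajectory, conclude $t_{max}=+\infty$, and then squeeze the limits out of the ODEs --- but the two implementations differ in ways worth noting. For boundedness, the paper first shows $X \ge 0$ (using $W^2/Y^2 < A_2/(2A_3)$), invokes positivity of $Z$ and the separate proposition $dX+Z\le 1$ to bound $X$ and $Z$, and only then uses (\ref{integraleqn}) to bound $Y$; your single substitution of $A_3W^2 \le \tfrac{A_2}{d+2}Y^2$ into (\ref{integraleqn}) bounds $X$, $Y$, $Z$, and $\mathcal{C}\mathcal{L}^2$ simultaneously and is cleaner. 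For the limits, the paper leans on lemma \ref{lemmasign}: eventual monotonicity of $g$ and $W/Y$ gives existence of $\lim Y$ and $\lim W$ first, whence $\lim (dX^2+Z^2)$ exists by (\ref{integraleqn}), and then the elementary fact that a convergent function whose derivative converges must have derivative tending to zero is applied to $\mathcal{L}$, $Z$, and $X$ in turn. You replace both the monotonicity input and that elementary fact with Barbalat's lemma (first on $\int(dX^2+Z^2)\,dt = \log(\mathcal{L}_\infty/\mathcal{L}(0))$, which is finite since $\mathcal{L}$ is increasing and bounded, then on $Z$ and $X$), which lets you bypass lemma \ref{lemmasign} entirely; the price is checking uniform continuity of the relevant derivatives, which is immediate from the boundedness already established. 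Both routes are sound and extract $W\to 0$ from the $Z$-equation and $Y \to 0$ from the $X$-equation in the same way; yours is somewhat more self-contained for this corollary, while the paper's reuses machinery (lemma \ref{lemmasign}, the $dX+Z\le1$ proposition) that it needs elsewhere anyway.
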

\begin{proof}
	First, observe that the differential equation for $X$ and the fact that $$\frac{W^2}{Y^2} \le \frac{A_2}{A_3(d+2)} < \frac{A_2}{2 A_3}$$ implies that $X \ge 0$ for all $t \in (-\infty, t_{max})$.
	Now,
	$$X \ge 0, Z \ge 0, \text{ and } dX+Z \le 1 \implies X,Z \text{ are bounded}$$
	Moreover,
		$$W^2 < \frac{A_2}{2 A_3} Y^2 \implies dX^2 + Z^2 + \frac{A_2}{2} Y^2 \le dX^2 + Z^2 + A_2 Y^2 - A_3 W^2 = 1 - \mathcal{C} \cL^2 \le 1$$
	and so $Y$ is also bounded.
	Finally, $\frac{W^2}{Y^2} \le \frac{A_2}{(d+2)A_3}$ implies $W$ is bounded.
	Therefore, $t_{max} = +\infty$.
	
	From proposition \ref{lemmasign}, $\cL, g,$ and $\frac{W}{Y}$ have limits (in $\mathbb{R} \cup \{ \pm \infty \}$) as $t \to +\infty$.
	The bound on $\frac{W^2}{Y^2}$ implies that the limits of $\cL$ and $\frac{W}{Y}$ are finite.
	Because the limits of $\cL, g$ and $\frac{W}{Y}$ as $t \rightarrow \infty$ exist, so do the limits of $Y$ and $W$ as $t \rightarrow \infty$.
The first integral equation (\ref{integraleqn}) then implies that $\lim_{t \rightarrow \infty} d X^2 + Z^2$ exists.
Hence, $\lim_{t \rightarrow \infty} \cL_t = \lim_{t \rightarrow \infty} \cL(dX^2 + Z^2)  = 0$, which implies that $$\lim_{t \rightarrow \infty} X = \lim_{t \rightarrow \infty} Z = 0.$$
Now,
\begin{equation*}
\begin{aligned}
\lim_{t \rightarrow \infty} Z_t = &\lim_{t \rightarrow \infty} Z(dX^2+Z^2 -1) + A_3W^2 \text{ exists}\\
\implies &\lim_{t \rightarrow \infty} Z(dX^2+Z^2 -1) + A_3W^2 = 0\\
\implies &\lim_{t \rightarrow \infty} W = 0\\
\end{aligned}
\end{equation*}
A similar argument applied to $X_t$ shows that $\lim_{t \rightarrow \infty} Y = 0$.
Finally, the claim about the asymptotic behavior of $\cL(t)$ follows immediately from taking the limit of the first integral equation (\ref{integraleqn}) as $t \rightarrow +\infty$.

Because the Ricci solitons constructed in corollary \ref{mainCor} satisfy the assumptions of the previous theorem when $\mathcal{C} \lambda^2 \ge \Lambda_0$ and $s(t) = \int_{-\infty}^t \cL(\tau) d\tau$, it immediately follows that the Ricci solitons constructed in corollary \ref{mainCor} are complete if $\mathcal{C} \lambda^2 \ge \Lambda_0$.
\end{proof}

Therefore, to complete the proof of theorem \ref{mainthm}, it remains to prove theorem \ref{mainCthm}.
\textit{For the remainder of this section, it will be assumed that $(X,Y,Z,W)(t)$ is a solution of (\ref{nonlin1}) satisfying assumptions $(i-iv)$ of theorem \ref{mainCthm} and $\lambda >0$.}
\textit{Additionally, suppose for contradiction that there exist $t \in (-\infty, t_{max})$ such that}
	$$\frac{W^2}{Y^2}(t) > \frac{A_2}{A_3(d+2)}$$
Define
	$$b \doteqdot \inf \left\{ t \in (-\infty, t_{max}) \left| \frac{W^2}{Y^2}(t) > \frac{A_2}{A_3(d+2)} \right. \right\} \in (-\infty, t_{max})$$
Note that $b$ may depend on $\mathcal{C} \lambda^2$.
To simplify the notation, consider the rescaled variables defined by
\begin{align*}
	\ol{X} =& \frac{X}{Y} & \ol{Y} =& \frac{1}{Y}\\
	\ol{Z} =& \frac{Z}{Y} & \ol{W} =& \frac{W}{Y}\\
	\ol{\cL} =& \frac{\cL}{Y} &  \ol{t}(t) =& \int_{-\infty}^t Y(\tau) d\tau
\end{align*}
Recall from proposition \ref{lestimates} that $Y(t)$ is integrable at $t =-\infty$ so $\ol{t}$ is well-defined.
Moreover, $Y > 0$ implies that $\ol{t}$ is an injective function of $t$ and so we may consider $\ol{X}, \ol{Y}, \ol{Z}, \ol{W}$ as functions of $\ol{t}$.
These rescaled variables satisfy the ODE system
	\begin{equation*} \begin{aligned}
		\frac{d}{d\ol{t}} \ol{X} &= \ol{X} \left( \ol{X}-\ol{Y} \right) + \frac{A_2}{d} - 2 \frac{A_3}{d} \ol{W}^2\\
		\frac{d}{d\ol{t}} \ol{Y} &= \ol{X} \ol{Y} - d\ol{X}^2 - \ol{Z}^2\\
		\frac{d}{d \ol{t}} \ol{Z} &= \ol{Z} \left( \ol{X}- \ol{Y} \right) + A_3 \ol{W}^2\\
		\frac{d}{d \ol{t}} \ol{W} &= \ol{W} \left( \ol{Z}-\ol{X} \right)\\
		\frac{d}{d \ol{t}} \ol{\cL} &= \ol{\cL}  \ol{X} \\
	\end{aligned} \end{equation*}
and in these variables the first integral equation (\ref{integraleqn}) reads
	$$d\ol{X}^2 + \ol{Z}^2 + A_2 - A_3 \ol{W}^2 = \ol{Y}^2 - \mathcal{C} \ol{\cL}^2$$
Moreover, $\ol{W}$ satisfies the second-order equation
		$$\frac{d^2}{d\ol{t}^2} \ol{W} = \left( \frac{d}{d \ol{t}} \ol{W} \right) \left( \ol{Z}-\ol{Y} \right) + \frac{A_3(d+2)}{d} \ol{W} \left(  \ol{W}^2 - \frac{A_2}{A_3(d+2)} \right)$$
Define $\ol{b} \doteqdot \ol{t}(b)>0$.
We collect some basic properties of $\left( \ol{X}, \ol{Y}, \ol{Z}, \ol{W} \right)\left( \ol{t} \right)$ on $\left(0, \ol{b} \right]$.
\begin{lem}
\begin{align}
 	&\lim_{\ol{t} \searrow 0 } \ol{W}  = 0 & \label{eq1} \\
	& \lim_{\ol{t} \searrow 0 } \ol{W}_{\ol{t}}  = 1  \label{eq2}\\
	&0 \le \ol{W}  < \sqrt{ \frac{A_2}{A_3(d+2)}} &\forall \ol{t} \in \left(0, \ol{b} \right) \label{eq3} \\
	&0 \le \ol{X} & \forall \ol{t} \in \left(0, \ol{b} \right] \label{eq4}\\
	&0 \le \ol{W}_{\ol{t}}  & \forall \ol{t} \in \left(0, \ol{b} \right] \label{eq5}\\
	&d \ol{X} + \ol{Z} \le \ol{Y} & \forall \ol{t} \in \left(0, \ol{b} \right] \label{eq6}\\
	&\ol{Z}- \ol{Y} \le 0 & \forall \ol{t} \in \left(0, \ol{b} \right] \label{eq7}\\
	& 0 \le \ol{\mathcal{L}}_{\ol{t}} & \forall \ol{t} \in \left(0, \ol{b} \right] \label{eq8}\\
	& \lim_{\ol{t} \searrow 0} \ol{\mathcal{L}} = \lambda & \label{eq9}
\end{align}
\end{lem}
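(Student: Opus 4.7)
The plan is to verify the nine properties by translating back to the unbarred variables $(X,Y,Z,W)(t)$ and combining four ingredients: the asymptotics of Proposition \ref{lestimates} together with hypothesis (iv); the sign-propagation arguments in Lemma \ref{lemmasign}; the bound $dX+Z \le 1$ from the proposition immediately preceding the theorem; and the definition of $b$. Since $\ol{\cL}=\cL/Y=g$, claim (\ref{eq9}) is the definition of $\lambda$, and claim (\ref{eq8}) reduces via the identity $\ol{\cL}_{\ol{t}} = \ol{\cL}\,\ol{X}$ to claim (\ref{eq4}). Claims (\ref{eq1}) and (\ref{eq2}) are direct limit computations using (iv): $\ol{W}=(W/Y^2)\cdot Y \to 1\cdot 0 = 0$, and $\ol{W}_{\ol{t}} = \ol{W}(\ol{Z}-\ol{X}) = (W/Y^2)(Z-X) \to 1\cdot 1 = 1$ (using $Z\to 1$, $X\to 0$). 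Claim (\ref{eq3}) is immediate from the definition of $b$ as the infimum of $\{t:(W/Y)^2(t) > A_2/(A_3(d+2))\}$. Claim (\ref{eq6}) is the preceding proposition divided through by $Y$. Finally, claim (\ref{eq7}) follows from (\ref{eq4}) and (\ref{eq6}) since $\ol{Z}\le\ol{Y}-d\ol{X}\le\ol{Y}$ once $\ol{X}\ge 0$.

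For (\ref{eq4}) I argue by contradiction. Suppose $X(t_1)<0$ for some $t_1\in(-\infty,b]$; by Proposition \ref{lestimates}(a), $t_{\ast}\doteqdot\inf\{t:X(t)<0\}$ is finite and satisfies $t_{\ast}\le t_1\le b$. Continuity gives $X(t_{\ast})=0$, and since $X$ becomes negative in every right neighborhood of $t_{\ast}$ smoothness forces $X_t(t_{\ast})\le 0$. Substituting $X(t_{\ast})=0$ into the equation for $X_t$ in (\ref{nonlin1}) yields $(W/Y)^2(t_{\ast})\ge A_2/(2A_3)$. On the other hand $t_{\ast}\le b$ combined with the definition of $b$ forces $(W/Y)^2(t_{\ast})\le A_2/(A_3(d+2)) < A_2/(2A_3)$ (since $d>0$), a contradiction.

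Claim (\ref{eq5}) will be the main obstacle, because at the analogous first negativity time $t_{\ast\ast}\doteqdot\inf\{t:(Z-X)(t)<0\}$ the first-derivative computation only yields $(W/Y)^2(t_{\ast\ast})\le A_2/(A_3(d+2))$, which is consistent with $t_{\ast\ast}\le b$ and produces no immediate contradiction. My remedy is to exploit the global identity
\begin{equation*}
\frac{d}{dt}\frac{W^2}{Y^2} \;=\; 2\,\frac{W^2}{Y^2}(Z-X).
\end{equation*}
Assuming for contradiction $(Z-X)(t_1)<0$ for some $t_1\in(-\infty,b]$, the persistence argument of Lemma \ref{lemmasign} gives $(Z-X)<0$ on $(t_{\ast\ast},t_{max})$; hence $(W^2/Y^2)_t<0$ strictly on $(t_{\ast\ast},t_{max})\supseteq(b,t_{max})$, so $W^2/Y^2$ is strictly decreasing past $b$. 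But continuity together with the definition of $b$ forces $(W/Y)^2(b)=A_2/(A_3(d+2))$, while every right neighborhood of $b$ contains times where $(W/Y)^2$ strictly exceeds this value, contradicting the strict decrease. Therefore $Z-X\ge 0$ on $(-\infty,b]$, which yields $\ol{W}_{\ol{t}}=\ol{W}(\ol{Z}-\ol{X})\ge 0$ and completes the lemma.
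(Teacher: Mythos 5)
Your proof is correct and follows essentially the same route as the paper: the limits and algebraic claims (\ref{eq1}), (\ref{eq2}), (\ref{eq3}), (\ref{eq6})--(\ref{eq9}) are handled identically, (\ref{eq4}) is the same barrier argument at a first zero of $X$ using $\ol{W}^2 \le A_2/(A_3(d+2)) < A_2/(2A_3)$, and (\ref{eq5}) rests, exactly as in the paper, on the forward-invariance of negativity of $Z-X$ from Lemma \ref{lemmasign} combined with the definition of $b$. The only cosmetic difference is in (\ref{eq5}): the paper reads $(Z-X)(b)\ge 0$ directly off the one-sided maximality of $\ol{W}$ at $\ol{b}$ and then applies Lemma \ref{lemmasign} contrapositively, whereas you propagate an assumed negativity forward and contradict the definition of $b$ via the strict decrease of $W^2/Y^2$ --- logically the same argument.
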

\begin{proof}
	(\ref{eq1}) $$\lim_{\ol{t} \searrow 0 } \ol{W}( \ol{t}) = \lim_{t \searrow -\infty} \frac{W}{Y}(t) = 0$$
	(\ref{eq2}) $$\lim_{\ol{t} \searrow 0 } \ol{W}_{\ol{t}} ( \ol{t}) = \lim_{t \searrow - \infty} \frac{W}{Y^2} (Z-X)= 1$$
	(\ref{eq3}) $$\ol{W} = \frac{W}{Y} \implies \ol{W} > 0 \quad\forall \ol{t} \in (0 , \ol{b})$$ 
	The upper bound follows from the definition of $\ol{b}$.\\
	(\ref{eq4}) Follows from the fact that $X \ge 0 $ for $t \ll -1$ and the differential equation for $\ol{X}(\ol{t})$.\\
	(\ref{eq5}) From the definition of $\ol{b}$, $$0 \le \ol{W}_{\ol{t}} \left( \ol{b} \right) = \sqrt{ \frac{A_2}{A_3 (d+2)} } \left( \ol{Z} - \ol{X} \right) \left( \ol{b} \right) \implies (Z - X)(b) \ge 0$$
	It follows from lemma \ref{lemmasign} that $Z-X \ge 0$ for all $t \in (-\infty, b]$.
	Hence,
		$$\ol{W}_{\ol{t}} = \ol{W} \left( \ol{Z} - \ol{X} \right) \ge 0 \quad \forall \ol{t} \in \left(0, \ol{b} \right]$$
	(\ref{eq6}) This is the fact that $dX + Z \le 1$ restated in the rescaled variables.\\
	(\ref{eq7}) Follows from (\ref{eq6}) and (\ref{eq4}).\\
	(\ref{eq8}) Follows from (\ref{eq4}) and the fact that $Y, \mathcal{L} >0$.\\
	(\ref{eq9}) From the definition of $\cL$, $$\lim_{\ol{t} \searrow 0} \ol{\cL} = \lim_{t \searrow -\infty} \frac{ \cL}{Y} = \lim_{t \searrow -\infty} g(t) = \lambda$$
\end{proof}

\begin{lem} On $\left(0, \ol{b} \right]$,
	$$\frac{d}{d \ol{t}} \left( \ol{Z} - \ol{Y} \right) \le - \frac{1}{2} \mathcal{C} \lambda^2 + \frac{3}{2} \left( \ol{Z} - \ol{Y} \right)^2$$
	In particular, 
		$$\left( \ol{Z} - \ol{Y} \right) \left( \ol{t} \right) \le - \sqrt{ \frac{ \mathcal{C} \lambda^2}{3}}  \tanh \left( \frac{\sqrt{ 3 \mathcal{C} \lambda^2}}{2}  \ol{t} \right) \qquad \forall \ol{t} \in \left( 0, \ol{b} \right]$$
\end{lem}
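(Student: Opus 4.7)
The plan is to compute $\frac{d}{d\ol{t}}(\ol{Z} - \ol{Y})$ directly from the rescaled ODE system, reshape the result using the first integral equation together with the sign and pointwise bounds from the preceding lemma, and then apply a standard Riccati comparison argument to obtain the $\tanh$ estimate.

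Expanding the time derivative via the ODEs for $\ol{Y}$ and $\ol{Z}$ and substituting $d\ol{X}^{2} + \ol{Z}^{2} = \ol{Y}^{2} - A_{2} + A_{3}\ol{W}^{2} - \mathcal{C}\ol{\mathcal{L}}^{2}$ from the rescaled first integral will yield
\begin{equation*}
\frac{d}{d\ol{t}}(\ol{Z} - \ol{Y}) = (\ol{Y} - \ol{X})(\ol{Y} - \ol{Z}) + 2A_{3}\ol{W}^{2} - A_{2} - \mathcal{C}\ol{\mathcal{L}}^{2}.
\end{equation*}
I then split $\ol{Y} - \ol{X} = (\ol{Y} - \ol{Z}) + (\ol{Z} - \ol{X})$, producing the Riccati quadratic $(\ol{Z} - \ol{Y})^{2}$ together with a cross term $(\ol{Z} - \ol{X})(\ol{Y} - \ol{Z})$. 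Since $\ol{X} \ge 0$ and $\ol{Y} \ge \ol{Z}$ on $(0, \ol{b}]$ by (\ref{eq4}) and (\ref{eq7}), the cross term is bounded by $\ol{Z}(\ol{Y} - \ol{Z})$, and rewriting this via $(\ol{Y} - \ol{Z})(\ol{Y} + \ol{Z}) = d\ol{X}^{2} + A_{2} - A_{3}\ol{W}^{2} + \mathcal{C}\ol{\mathcal{L}}^{2}$ combined with $\ol{Y} + \ol{Z} \ge 2\ol{Z}$ gives $\ol{Z}(\ol{Y} - \ol{Z}) \le \tfrac{1}{2}(d\ol{X}^{2} + A_{2} - A_{3}\ol{W}^{2} + \mathcal{C}\ol{\mathcal{L}}^{2})$. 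Invoking (\ref{eq6}) to bound $d\ol{X}^{2} \le (\ol{Z} - \ol{Y})^{2}/d$, the defining property of $\ol{b}$ to bound $\ol{W}^{2} \le A_{2}/(A_{3}(d+2))$, and (\ref{eq8})--(\ref{eq9}) to give $\ol{\mathcal{L}} \ge \lambda$, these pieces combine into
\begin{equation*}
\frac{d}{d\ol{t}}(\ol{Z} - \ol{Y}) \le \left(1 + \frac{1}{2d}\right)(\ol{Z} - \ol{Y})^{2} + \frac{A_{2}(1 - d)}{2(d+2)} - \frac{1}{2}\mathcal{C}\lambda^{2}.
\end{equation*}
The constant is nonpositive for $d \ge 2$ and $1 + \frac{1}{2d} \le \frac{5}{4} \le \frac{3}{2}$, giving the claimed inequality.

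For the $\tanh$ bound, I would use as upper barrier the explicit solution $v(\ol{t}) = -\sqrt{\mathcal{C}\lambda^{2}/3}\tanh\bigl(\tfrac{1}{2}\sqrt{3\mathcal{C}\lambda^{2}}\,\ol{t}\bigr)$ of the Riccati ODE $v' = \tfrac{3}{2}v^{2} - \tfrac{1}{2}\mathcal{C}\lambda^{2}$ with $v(0) = 0$, noting that $(\ol{Z} - \ol{Y})(\ol{t}) = (Z - 1)/Y \to 0$ as $\ol{t} \searrow 0$ by Proposition \ref{lestimates}, and applying the standard comparison theorem for ODEs with locally Lipschitz right-hand side. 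The hard part will be the cross-term estimate: one must arrange it so that the negative contribution from $\mathcal{C}\ol{\mathcal{L}}^{2}$ survives. If one uses the first integral too aggressively (for instance to eliminate $\ol{Y}^{2}$ throughout), the $\mathcal{C}\ol{\mathcal{L}}^{2}$ term cancels entirely and the Riccati constant disappears. The trick is to apply the first integral only to the factor $\ol{Z}(\ol{Y} - \ol{Z})$, which preserves a residual $+\tfrac{1}{2}\mathcal{C}\ol{\mathcal{L}}^{2}$ that combines with the explicit $-\mathcal{C}\ol{\mathcal{L}}^{2}$ in the derivative formula to yield the needed $-\tfrac{1}{2}\mathcal{C}\ol{\mathcal{L}}^{2} \le -\tfrac{1}{2}\mathcal{C}\lambda^{2}$.
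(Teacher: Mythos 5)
Your proof is correct and follows essentially the same route as the paper: after reorganizing the algebra (you substitute the first integral earlier, the paper later), both arguments reduce to the identical intermediate bound $\frac{d}{d\ol{t}}(\ol{Z}-\ol{Y}) \le -\tfrac{1}{2}\mathcal{C}\ol{\cL}^2 + (\ol{Z}-\ol{Y})^2 + \tfrac{1}{2}d\ol{X}^2 - \tfrac{1}{2}A_2 + \tfrac{3}{2}A_3\ol{W}^2$, and then invoke the same inequalities (\ref{eq3}), (\ref{eq4}), (\ref{eq6})--(\ref{eq9}) with $d \ge 2$ and the same Riccati/$\tanh$ comparison with zero initial data. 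Your closing remark about applying the first integral only to the cross term so that a residual $-\tfrac{1}{2}\mathcal{C}\ol{\cL}^2$ survives is exactly the point of the paper's step using $\ol{Y}(\ol{Z}-\ol{Y}) \le \tfrac{1}{2}(\ol{Z}+\ol{Y})(\ol{Z}-\ol{Y})$.
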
	
\begin{proof}
	For $\ol{t} \in \left(0, \ol{b} \right]$,
	\begin{equation*} \begin{aligned}
		\frac{d}{d \ol{t}} \left(\ol{Z} - \ol{Y} \right) =& \left( \ol{X} + \ol{Y} \right)\left(\ol{Z} - \ol{Y} \right) + \left(\ol{Z} - \ol{Y} \right)^2 + d \ol{X}^2 + A_3 \ol{W}^2\\
		\le& \ol{Y} \left(\ol{Z} - \ol{Y} \right) + \left(\ol{Z} - \ol{Y} \right)^2 + d \ol{X}^2 + A_3 \ol{W}^2 &&(\text{by \ref{eq4}})\\
		\le& \frac{1}{2} \left( \ol{Z} + \ol{Y}\right) \left( \ol{Z} - \ol{Y} \right) + \left(\ol{Z} - \ol{Y} \right)^2 + d \ol{X}^2 + A_3 \ol{W}^2 && (\text{by \ref{eq7}})\\
		=& \frac{1}{2} \left( - \mathcal{C} \ol{\cL}^2 - d \ol{X}^2 - A_2 + A_3 \ol{W}^2 \right) + \left(\ol{Z} - \ol{Y} \right)^2 + d \ol{X}^2 + A_3 \ol{W}^2 && (\text{by \ref{integraleqn}})\\
		=&- \frac{1}{2} \mathcal{C} \ol{\cL}^2 + \left(\ol{Z} - \ol{Y} \right)^2 + \frac{1}{2} d \ol{X}^2 - \frac{1}{2}A_2 + \frac{3}{2} A_3 \ol{W}^2\\
		\le& - \frac{1}{2} \mathcal{C} \lambda^2 - \frac{1}{2} A_2 + \left(\ol{Z} - \ol{Y} \right)^2 + \frac{1}{2} d \ol{X}^2 + \frac{3}{2} A_3 \ol{W}^2 && (\text{by \ref{eq8} \& \ref{eq9}})\\
		\le& -\frac{1}{2} \mathcal{C} \lambda^2 - \frac{1}{2}A_2 + \frac{3}{2} \left(\ol{Z} - \ol{Y} \right)^2 + \frac{3}{2} A_3 \ol{W}^2 && (\text{by \ref{eq4} \&  \ref{eq6}})\\
		\le& - \frac{1}{2} \mathcal{C} \lambda^2 + \frac{3}{2} \left( \ol{Z} - \ol{Y} \right)^2 && \left(\text{by \ref{eq3} \&  $d \ge 2$} \right)\\
	\end{aligned} \end{equation*}
	Recall that 
		$$\lim_{\ol{t} \searrow 0} \ol{Z} - \ol{Y} = \lim_{t \to -\infty} \frac{Z-1}{Y} = 0$$
	where the last equality follows from proposition \ref{constants}.
	The last statement of the proposition then follows from the comparison principle for ordinary differential equations.
\end{proof}

At this point, we have all the facts necessary to arrive at a contradiction and prove theorem \ref{mainCthm}.

\begin{proof}(of theorem \ref{mainCthm})
Recall that $\ol{W}$ satisfies
\begin{equation*} \begin{aligned}
	\frac{d^2}{d\ol{t}^2} \ol{W} =& \left( \frac{d}{dt} \ol{W} \right) \left( \ol{Z}-\ol{Y}\right) + \frac{A_3(d+2)}{d} \ol{W} \left( \ol{W}^2 - \frac{A_2}{A_3(d+2)} \right) \\
	\le&  \left( \frac{d}{d\ol{t}} \ol{W} \right) \left( \ol{Z}-\ol{Y} \right) && \left(\forall \ol{t} \in \left(0, \ol{b} \right] \right)\\
\end{aligned} \end{equation*}
 
We estimate the right-hand side using $\ol{W}_{\ol{t}} \ge 0$ and the bound
	$$\ol{Z} - \ol{Y} \le - \sqrt{ \frac{ \mathcal{C} \lambda^2}{3}} \tanh \left( \frac{\sqrt{ 3 \mathcal{C} \lambda^2}}{2}  \ol{t} \right)$$
from the previous proposition.
\begin{equation*} \begin{aligned}
	\frac{d^2}{d\ol{t}^2} \ol{W} \le& - \sqrt{ \frac{ \mathcal{C} \lambda^2}{3}}  \tanh \left( \frac{\sqrt{ 3 \mathcal{C} \lambda^2}}{2}  \ol{t} \right)\left( \frac{d}{d\ol{t}} \ol{W} \right) \\
	\implies \ol{W}_{\ol{t}} ( \ol{t}) \le&  \left[ \cosh \left( \frac{\sqrt{ 3 \mathcal{C} \lambda^2}}{2}  \ol{t} \right)  \right]^{-2/3} \\
	\implies \ol{W}(\ol{t}) \le&  \int_0^{\ol{t}} \left[ \cosh \left( \frac{\sqrt{ 3 \mathcal{C} \lambda^2}}{2}  \tau \right)  \right]^{-2/3} d \tau \\
	\le& \int_0^{\ol{t}} \left[ \frac{1}{2} e^{ \frac{\sqrt{ 3 \mathcal{C} \lambda^2}}{2}  \tau } \right]^{-2/3} d \tau\\
	\le& 2^{2/3} \int_0^{+\infty} e^{ - \sqrt{ \frac{ \mathcal{C} \lambda^2}{3} }  \tau } d \tau \\
	=&  2^{2/3} \sqrt{ \frac{3}{\mathcal{C} \lambda^2} } 
\end{aligned} \end{equation*}

Choose $\Lambda_0 > 0$ sufficiently large such that
	$$2^{2/3} \sqrt{ \frac{3}{\Lambda_0} } < \sqrt{ \frac{A_2}{A_3(d+2)} }$$
It follows that if $\mathcal{C} \lambda^2 \ge \Lambda_0$, then the above estimate implies that $\ol{W} \left( \ol{b} \right) < \sqrt{ \frac{A_2}{A_3 (d+2)}}$, a contradiction.
This contradiction indicates that for $\mathcal{C} \lambda^2 \ge \Lambda_0$, we have the bound
	$$\frac{W}{Y} (t) \le \sqrt{ \frac{A_2}{ A_3 (d+2)}} \qquad \forall t \in (-\infty, t_{max})$$
as claimed.
\end{proof}

%%%%%%%%%%%%%%%%%%%%%%%%%%%%%%%%%%%%%%%%%%%%%%%%%%%%%%%%%%%%%%%%%%%%%%
\section{Geometric Properties}
%%%%%%%%%%%%%%%%%%%%%%%%%%%%%%%%%%%%%%%%%%%%%%%%%%%%%%%%%%%%%%%%%%%%%%
In this the final section, we investigate certain geometric properties of these solitons.
\begin{thm}
The complete soliton metrics constructed in corollary \ref{mainCor} have nonnegative Ricci curvature.
\end{thm}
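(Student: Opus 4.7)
The plan is to verify nonnegativity of each principal Ricci eigenvalue. Since $Rc = \nabla^2 h$ and $h = h(s)$, both tensors are diagonal in the orthogonal splitting $TE = \mathbb{R}\partial_s \oplus V \oplus H$ of the tangent bundle into radial, vertical $U(1)$-fibre, and horizontal subspaces, with eigenvalues $h_{ss}$, $h_s f_s/f$, and $h_s g_s/g$ (the last with multiplicity $d$). It thus suffices to show each of these three quantities is nonnegative.

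The recovery formulas of Remark \ref{recover} give $h_s = (1-dX-Z)/\cL$, $f_s = ZW/Y^2$, and $g_s = X/Y$. The unlabelled proposition just before Theorem \ref{mainCthm} gives $dX+Z \leq 1$, so $h_s \geq 0$; positivity of $Y,Z,W$ gives $f_s > 0$; and the completeness bound $W^2/Y^2 \leq A_2/[A_3(d+2)] < A_2/(2A_3)$ from Theorem \ref{mainCthm} makes the forcing in
\[
X_t = X(dX^2+Z^2-1) + \tfrac{A_2}{d}Y^2 - \tfrac{2A_3}{d}W^2
\]
strictly positive whenever $X=0$, which, combined with $X>0$ for $t\ll -1$ (Proposition \ref{lestimates}(a)), forces $X \geq 0$ for all $t$, so $g_s \geq 0$. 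This proves $h_s f_s/f \geq 0$ and $h_s g_s/g \geq 0$.

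For the remaining eigenvalue $h_{ss}$, I would combine the first integral $\Delta h + h_s^2 = \cC$ with the formula $\Delta h = h_{ss} + h_s(f_s/f + d g_s/g)$ and the identity $f_s/f + d g_s/g = (Z + dX)/\cL$ to obtain the clean expression $h_{ss} = \cC - h_s/\cL$. Introducing $\phi := \cL^2 h_{ss} = \cC\cL^2 - (1 - dX - Z)$, the task reduces to showing $\phi \geq 0$ along the orbit. A direct calculation using (\ref{nonlin1}) and $(\cL^2)_t = 2\cL^2(dX^2+Z^2)$ should give the linear inhomogeneous ODE
\[
\phi_t = \phi\bigl[2(dX^2+Z^2) - 1\bigr] + (1 - dX - Z)(dX^2 + Z^2),
\]
whose forcing is nonnegative because $X, Z \geq 0$ and $dX + Z \leq 1$.

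To initialize, Proposition \ref{constants} together with the limits $(1-Z)/Y^2 \to (\cC\lambda^2 + A_2)/2$, $X/Y^2 \to A_2/(2d)$, and $\cL^2/Y^2 \to \lambda^2$ (the last two following from Proposition \ref{lestimates} and a recursive l'H\^opital as in Theorem \ref{recoverThm}) give $\phi/Y^2 \to \cC\lambda^2/2 > 0$ as $t \to -\infty$, so $\phi > 0$ for $t \ll -1$. Variation of parameters applied to the displayed linear ODE then propagates $\phi \geq 0$ forward through $(-\infty, t_{\max})$, proving $h_{ss} \geq 0$. The main obstacle is spotting the right quantity $\phi$ and verifying that its evolution has this sign-preserving structure; once the identities $h_{ss} = \cC - h_s/\cL$ and the displayed ODE for $\phi$ are in hand, the sign of the forcing together with the initial asymptotics close the argument.
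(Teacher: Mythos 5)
Your proof is correct, and while it uses the same basic reduction as the paper (diagonalize $\nabla^2 h$ and check the three scalar quantities $h_{ss}$, $h_s f_s/f$, $h_s g_s/g$), it handles two of the three by a genuinely different and more self-contained route. The paper disposes of $h_s > 0$ and $h_{ss} > 0$ in one line by citing Proposition 2.3 of \cite{BDW15}, and then argues $X \ge 0$ by contradiction: if $X(t_*)<0$ then Lemma \ref{lemmasign} forces $X<0$ thereafter, whence $Y_t = Y(dX^2+Z^2-X)>0$, contradicting $Y \to 0$ from Corollary \ref{mainCcor}. You instead get $X\ge 0$ from the sign of the forcing $\frac{A_2}{d}Y^2 - \frac{2A_3}{d}W^2$ at a first zero of $X$ (which is exactly the observation already made at the start of the proof of Corollary \ref{mainCcor}, so it is available), get $h_s \ge 0$ from $dX+Z\le 1$ via $h_s = (1-dX-Z)/\cL$, and prove $h_{ss}\ge 0$ by a new internal argument: the identity $h_{ss} = \cC - h_s/\cL$ obtained from the first integral, the evolution equation $\phi_t = \phi[2(dX^2+Z^2)-1] + (1-dX-Z)(dX^2+Z^2)$ for $\phi = \cC\cL^2 - (1-dX-Z)$, and the initial asymptotic $\phi/Y^2 \to \cC\lambda^2/2 > 0$. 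I checked these computations against the system (\ref{nonlin1}), the first integral (\ref{integraleqn}), and Propositions \ref{lestimates} and \ref{constants}, and they are all correct (the forcing in the $\phi$-equation only needs $dX+Z\le 1$ and $dX^2+Z^2\ge 0$, so your parenthetical appeal to $X,Z\ge0$ is harmless overkill). What your approach buys is independence from the external reference \cite{BDW15}; what it costs is roughly a page of additional ODE bookkeeping that the citation avoids.
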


\begin{proof}
As $Rc = \nabla^2 h$, it suffices to show that the metric Hessian of $h$ is nonnegative or, equivalently, that $h_{ss}$, $\frac{f_s h_s}{f}$, and $\frac{g_s h_s}{g}$ are all nonnegative.
By proposition 2.3 of ~\cite{BDW15}, $h_s$ and $h_{ss}$ are positive. %check that this still works for my case
From the change of variables (\ref{changeofvars}), $f_s =  \frac{ZW}{Y^2}$ as functions of $t$.
Thus, positivity of $f_s$ follows from the fact that $Z$ and $W$ are positive for all $t \in \mathbb{R}$.

Finally, we claim that $g_s = \frac{X}{Y}$ is nonnegative.
Suppose for contradiction that $X$ is negative for some $t_*$.
Then the proof of lemma \ref{lemmasign} implies that $X(t)<0$ for all $t \ge t_*$.
It follows that $Y_t = Y(dX^2+Z^2-X) > 0$ for all $t \ge t_*$.
However, $Y(t)$ increasing on $[t_*, \infty)$ and $Y(t_*) > 0$ contradicts the fact from corollary \ref{mainCcor} that $Y \rightarrow 0$ as $t \rightarrow +\infty$.
This contradiction indicates that $X \ge 0$ for all $t \in \mathbb{R}$.
Therefore, $g_s \ge 0$ and $Rc \ge 0$.
\end{proof}

\begin{remark} %maybe can rephrase the logic in this remark as just saying that Cao's examples don't have strictly positive ricci tensor
A result of Bryant ~\cite{Bryant04} and Chau-Tam ~\cite{ChauTam05} states that a complete gradient K\"ahler-Ricci soliton with positive Ricci curvature which attains its maximum scalar curvature is necessarily biholomorphic to $\mathbb{C}^n$. %NEED TO FIX BIBLIOGRAPHY ENTRY FOR BRYANT'S ARXIV EPRINT
The solitons constructed in corollary \ref{mainCor} achieve their maximum scalar curvature over the image of the zero section $B_0 \subset E$.
Moreover, theorem \ref{mainKahlerthm} shows that these solitons are K\"ahler when $c_1(E) = -c_1(B)$.
Hence, the result of Bryant and Chau-Tam indicates that these complete solitons will not have positive Ricci curvature in general.
\end{remark}

\begin{remark}
It is possible to show that, for any soliton $(G(f,g), h_s)$ on $E$ as in theorem \ref{mainthm},
\begin{equation*}
\begin{aligned}
&\lim_{s \rightarrow \infty} \frac{g(s)}{\sqrt{s}} \text{ is a finite positive number, and either }\\
(1) \qquad &\lim_{s \rightarrow \infty} \frac{f(s)}{\sqrt{s}} \text{ is a finite positive number, or }\\
(2) \qquad &\lim_{s \rightarrow \infty} f(s) \text{ is a finite positive number.}\\ %could this actually be 0? no b/c it's derivative g ZW/Y is strictly positive
\end{aligned}
\end{equation*}
In particular, the solitons have either paraboloid or cigar-paraboloid asymptotics.
Moreover, the asymptotics of $g(s)$ guarantee that in either case the scalar curvature satisfies the decay estimates in the asymptotically cylindrical hypothesis for Brendle's rigidity theorem on gradient steady Ricci solitons in dimensions greater than three ~\cite{Brendle14}.
\end{remark}

%%%%%%%%%%%%%%%%%%%%%%%%%%%%%%%%%%%%%%%%%%%%%%%%%%%%%%%%%%%%%%%%%%%%%%
%\section{K\"ahler-ness}
%%%%%%%%%%%%%%%%%%%%%%%%%%%%%%%%%%%%%%%%%%%%%%%%%%%%%%%%%%%%%%%%%%%%%%
We now investigate when these soliton metrics on the total space are in fact K\"ahler metrics.
For a complex line bundle $E \rightarrow B$ over a Fano K\"ahler-Einstein base $(B, \check{g}, \check{J})$ such that $c_1(E) = q c_1(B)$ and a given smooth metric of the form $G(f,g)$, the total space $E$ admits a natural complex structure $J$ compatible with the metric $G(f,g)$.
Specifically, the complex structure $J$ takes the form $J = J_f \oplus \check{J}$ on the complement of the image of the zero section where $J_f$ is a complex structure on the fiber that depends only on the radial fiber coordinate $s$.
The details of this construction are contained in the appendix.

\comment{
Recall that we are in the setting of a complex line bundle $E \rightarrow B$ over a K\"ahler-Einstein base $B$ with positive Einstein constant such that 
$$c_1(E) = q c_1(B) \text{ in } H^2(B, \mathbb{R}).$$
Let $B_0 \subset E$ denote the image of the zero section $B \rightarrow E$.
If $G$ denotes a smooth complete Riemannian metric on the total space $E$ of the form 
$$G = ds^2 + f(s)^2 g_{2\pi} + g(s)^2 \check{g} \text{ on } E \setminus B_0,$$ 
then $E$ admits a natural complex structure compatible with $G$ of the form
$$J = \check{J} \oplus J_f$$
where $\check{J}$ denotes the complex structure on the base $B$ and $J_f$ is the natural complex structure on the fiber that depends only on the radial fiber coordinate.
The details of the construction of $J$ are contained in the appendix.}

It follows from the computations of the components of $\nabla J$ (contained in the appendix) that $(E, G(f,g), J)$ is K\"ahler if and only if 
\begin{equation}
-\frac{d + 2}{2} q f = g_s g.
\end{equation}

\noindent Under the assumption that $-\frac{d + 2}{2} q f = g_s g$, the soliton equations (\ref{solitonEqns}) simplify to
\begin{equation} \label{KahlerRicciSolitonEqns} %had to alter these a bit since had that error in the original soliton equations
\left\{
\begin{aligned}
2h_{ss} &= -d\frac{g_s h_s}{g} - h_s^2 + (d +2) \frac{h_s}{g g_s} \\
2 \frac{g_{ss}}{g} &= -(d+2) \frac{g_s^2}{g^2} - \frac{g_s}{g} h_s + \frac{d +2}{g^2} \\
\end{aligned} \right.
\end{equation}
Notice that, unlike the general Ricci soliton equations (\ref{solitonEqns}), $q$ does not appear in the system (\ref{KahlerRicciSolitonEqns}).

\begin{prop} \label{KahlerClosing}
If $(E, G, J)$ is a smooth complete K\"ahler-Ricci soliton where $G$ and $J$ are of the form above, then $c_1(E) = -c_1(B)$ and $E \rightarrow B$ is the canonical bundle of $B$.
\end{prop}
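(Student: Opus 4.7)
The plan is to derive a constraint on the Chern-class constant $q$ by combining the K\"ahler condition $-\frac{d+2}{2}qf = g_s g$ with the closing conditions at $s=0$ and the simplified Ricci soliton ODEs (\ref{KahlerRicciSolitonEqns}). The key observation is that on the zero section the K\"ahler condition forces a specific value of $q$, and this value is precisely $q=-1$, corresponding to the canonical bundle.

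First, I would recall from the closing conditions (\ref{leftasymps}) for $G(f,g)$ to extend smoothly over the zero section $B_0 \subset E$ the boundary values $f(0)=0$, $f_s(0)=1$, $g(0)>0$, $g_s(0)=0$, together with $h_s(0)=0$. Next, I would differentiate the K\"ahler condition $-\frac{d+2}{2}qf = g_s g$ once in $s$, obtaining
\begin{equation*}
-\tfrac{d+2}{2} q\, f_s = g_{ss}\, g + g_s^2.
\end{equation*}
Evaluating at $s=0$ and using $f_s(0)=1$ and $g_s(0)=0$ yields
\begin{equation*}
-\tfrac{d+2}{2}\, q = g_{ss}(0)\, g(0).
\end{equation*}

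The second step is to compute $g_{ss}(0)\,g(0)$ independently from the K\"ahler-Ricci soliton system (\ref{KahlerRicciSolitonEqns}). Taking the second equation of that system and evaluating at $s=0$, where $g_s(0)=h_s(0)=0$, gives
\begin{equation*}
2\,\tfrac{g_{ss}(0)}{g(0)} = \tfrac{d+2}{g(0)^2},
\end{equation*}
so $g_{ss}(0)\,g(0) = \tfrac{d+2}{2}$. Substituting into the identity from the previous step yields $-\tfrac{d+2}{2}q = \tfrac{d+2}{2}$, hence $q = -1$.

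Finally, since $c_1(E) = q\, c_1(B)$, this forces $c_1(E) = -c_1(B)$, which is exactly the first Chern class of the canonical bundle $K_B \to B$. Because complex line bundles over $B$ are topologically classified by their first Chern class (and since, $B$ being Fano and simply connected, the Chern class determines the holomorphic isomorphism class of the line bundle as well), we conclude that $E \to B$ is isomorphic to the canonical bundle of $B$. The main obstacle in this argument is verifying that the closing conditions at $s=0$ can legitimately be plugged into the differentiated K\"ahler condition and the soliton ODE; this is justified by the smoothness of $(E,G,J)$ across $B_0$ combined with the even/odd extension properties of $g$, $f$, and $h_s$ noted in the setup section.
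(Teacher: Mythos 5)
Your proposal is correct and follows essentially the same route as the paper: differentiate the K\"ahler condition $-\frac{d+2}{2}qf = g_s g$, eliminate $g_{ss}$ via the second equation of (\ref{KahlerRicciSolitonEqns}), and evaluate at $s=0$ using the closing conditions (\ref{leftasymps}) to obtain $q=-1$, then invoke the topological classification of line bundles and simple connectedness of the Fano base. The only cosmetic difference is that the paper substitutes the ODE before taking the limit $s \searrow 0$ whereas you take the limit first; the computation is identical.
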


\begin{proof}
Differentiating the K\"ahler condition (\ref{KahlerRicciSolitonEqns}) with respect to $s$ implies that 
$$-\frac{d + 2}{2} q f_s = g_{ss} g + (g_s)^2,$$
which by the ODE system (\ref{KahlerRicciSolitonEqns}) becomes
$$-\frac{d + 2}{2} q f_s = -\frac{d+2}{2} g_s^2 - \frac{1}{2} g g_s h_s + \frac{d +2}{2} + (g_s)^2.$$
It follows from taking the limit as $s \rightarrow 0$ and $(\ref{leftasymps})$ that $q = -1$.
In other words, $c_1(E) = -c_1(B)$ in $H^2(B, \mathbb{R})$.

Because the Fano base $B$ is simply connected, it follows that $c_1(E) = -c_1(B)$ in $H^2(B, \mathbb{Z})$.
Therefore, $E \rightarrow B$ is the canonical bundle of $B$, as complex line bundles are classified by their first Chern class.
\end{proof}

\begin{thm} \label{mainKahlerthm}
For complex line bundles $E \rightarrow B$ with $c_1(E) = -c_1(B)$ in $H^2(B, \mathbb{R})$, the one-parameter family of Ricci solitons constructed in theorem \ref{mainthm} are K\"ahler with respect to the complex structure $J$.
\end{thm}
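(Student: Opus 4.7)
The plan is to combine the uniqueness result of Theorem \ref{uniqueness} with the observation that, when $q = -1$, the K\"ahler ansatz $f = \frac{2 g_s g}{d+2}$ reduces the full soliton system (\ref{solitonEqns}) to the K\"ahler-Ricci subsystem (\ref{KahlerRicciSolitonEqns}). I would first verify directly that if $(g, h_s)$ solves (\ref{KahlerRicciSolitonEqns}) with $g(0) = \lambda > 0$, $g_s(0) = 0$, $h_s(0) = 0$ and one sets $f := \frac{2 g_s g}{d+2}$, then the triple $(f, g, h_s)$ solves (\ref{solitonEqns}) with $q = -1$ and satisfies all of (\ref{leftasymps}); the only nontrivial point is $f_s(0) = 1$, which reduces to $g_{ss}(0) = (d+2)/(2\lambda)$ and follows by evaluating the $g$-equation of (\ref{KahlerRicciSolitonEqns}) at $s = 0$, where $g_s$ and $h_s$ both vanish. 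By construction, the resulting metric is then K\"ahler with respect to $J$.

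Next, I would argue that for every pair of positive constants $(\lambda_0, \mathcal{C}_0)$ the reduced K\"ahler-Ricci system admits a smooth complete solution on $E$ with $g(0) = \lambda_0$ and maximum scalar curvature $\mathcal{C}_0$. This is the content of Dancer and Wang's \cite{DancerWang08} construction of gradient steady K\"ahler-Ricci solitons on the canonical bundle of a Fano K\"ahler-Einstein base: their construction produces a one-parameter family modulo homothety, and combining this with the scaling symmetry $g \mapsto cg$, $s \mapsto cs$ (which sends $(\lambda, \mathcal{C}) \mapsto (c\lambda, c^{-2}\mathcal{C})$ and preserves $\mathcal{C}\lambda^2$) realizes every pair $(\lambda_0, \mathcal{C}_0) \in \mathbb{R}_{>0}^2$. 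An alternative, self-contained route would be to rerun the fixed-point and completeness arguments of sections 2-4 directly on the simpler system (\ref{KahlerRicciSolitonEqns}).

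With both steps in hand, for each $(\lambda_0, \mathcal{C}_0)$ the K\"ahler-Ricci construction provides a K\"ahler soliton of the form $(G(\tl{f}, \tl{g}), \tl{h}_s)$ with $\tl{g}(0) = \lambda_0$ and maximum scalar curvature $\mathcal{C}_0$, while Corollary \ref{mainCor} provides a soliton $(G(f,g), h_s)$ from our one-parameter family with the same invariants. Theorem \ref{uniqueness} then forces the two to coincide, so every member of our family is K\"ahler.

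The main obstacle is the second step, namely ensuring that the K\"ahler subsystem realizes the full two-parameter data $(\lambda, \mathcal{C})$ rather than just a smaller slice; uniqueness alone would otherwise only identify those of our solitons that happen to lie in the image of the K\"ahler construction. It is the surjectivity of the Dancer-Wang family (modulo homothety) onto the same invariants that classify our family, noted in the remark following the example in section \ref{uniquenessSection}, that closes this gap.
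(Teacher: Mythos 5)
Your argument is correct, and it is essentially the argument the paper itself records in remark \ref{shortproof}: reduce to the K\"ahler subsystem (\ref{KahlerRicciSolitonEqns}) via the ansatz $f=\frac{2g_sg}{d+2}$, invoke the Dancer--Wang construction to produce, for every pair $(\lambda_0,\cC_0)$, a complete K\"ahler soliton with those invariants, and then let theorem \ref{uniqueness} force each member of the family of corollary \ref{mainCor} to coincide with a K\"ahler one. The paper, however, deliberately does not adopt this as its proof of theorem \ref{mainKahlerthm}, precisely because the crucial surjectivity step --- that the free parameters $\kappa_1\le 0$, $\sigma_2>0$ in Theorem 4.20(i) of \cite{DancerWang08} realize every positive value of $\cC$ and of $g(0)$ --- is an external input that you, like remark \ref{shortproof}, assert rather than verify; your homothety observation only reduces this to showing that the Dancer--Wang parameter sweeps out every positive value of $\cC\lambda^2$, which must still be extracted from their paper. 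The paper's own, self-contained proof instead substitutes the K\"ahler ansatz into the change of variables of section 2 to obtain the two-dimensional homogeneous system (\ref{changeVarKahler}) in $(X,\tl{Y})$, which has a hyperbolic stationary point at $(0,\frac{2}{d+2})$; a perturbation theorem of Hallam--Heidel then produces, for each prescribed value of $\lim_{t\to-\infty}\bigl(\tl{Y}-\frac{2}{d+2}\bigr)/X$, a solution on the unstable manifold, which is shown (via proposition \ref{constants} and the identity $\frac{W}{Y}=\frac{f}{g}$, needed to confirm the recovered metric really satisfies $\frac{d+2}{2}f=g_sg$) to yield a K\"ahler soliton with any prescribed $(\lambda_0,\cC_0)$; uniqueness then closes the argument exactly as in your third step. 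What your route buys, besides brevity, is the additional conclusion that all solitons in the family are complete regardless of the size of $\cC\lambda^2$ (as remark \ref{shortproof} notes), whereas theorem \ref{mainCthm} only covers $\cC\lambda^2\ge\Lambda_0$; what the paper's route buys is independence from the parameter analysis in \cite{DancerWang08}. Your first step, including the computation $g_{ss}(0)=\frac{d+2}{2\lambda}$ forcing $f_s(0)=1$, is sound.
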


\begin{remark} \label{shortproof}
Let $E \rightarrow B$ be as in the statement of theorem \ref{mainKahlerthm}.
In ~\cite{DancerWang08}, the authors construct a one-parameter family of smooth complete steady gradient K\"ahler-Ricci solitons on $E$ (see Theorem 4.20(i) of ~\cite{DancerWang08} with $n_1=0$ and $r=2$).
Their solitons are of the form $(E, G(f,g), h_s)$ as in theorem \ref{mainthm} and the free parameters $\kappa_1 \le 0$ and $\sigma_2 > 0$ in their construction may be chosen to obtain any positive value of maximum scalar curvature $\cC$ and any positive value of $g(0)$.
Thus, theorem \ref{uniqueness} implies that our one-parameter family of Ricci solitons on $E$ is precisely the one-parameter family obtained in ~\cite{DancerWang08}.
In particular, all are complete regardless of the value of $\mathcal{C} \lambda^2$ and all are K\"ahler with respect to the complex structure $J$.
This argument proves theorem \ref{mainKahlerthm}, but, to remain self-contained, we include an independent proof of theorem \ref{mainKahlerthm}.
\end{remark}

\begin{proof}
After changing variables to $(X,Y,Z,W)(t)$ and then setting $\tl{Y}(t) = \frac{Y(t)^2}{X(t)}$, the ODE system (\ref{KahlerRicciSolitonEqns}) for a K\"ahler-Ricci soliton becomes the nonlinear homogeneous system
\begin{equation} \label{changeVarKahler}
\left\{ \begin{aligned}
X_t &= X\big(d X^2+(d+2)^2 \tl{Y}^2 - 2X - (d+2) \tl{Y} \big) \\
\tl{Y}_t &= \tl{Y} \big(d X^2 + (d+2)^2 \tl{Y}^2 - 3(d+2) \tl{Y} + 2\big) \end{aligned} \right.
\end{equation}
Note that solutions preserve the sign of $X$ and $\tl{Y}$.

Let $(X, \tl{Y})(t)$ be a solution of the initial value problem for the system (\ref{changeVarKahler}) with initial values $(X,\tl{Y})(t_0)$ chosen in the unstable manifold of $\left( 0, \frac{2}{d +2} \right)$ such that $X(t_0)>0$ and $\tl{Y}(t_0) - \frac{2}{d+2} < - \frac{A_2}{(d+2)^2} X(t_0)$.
There exists an open set in the plane of such initial values because
$(X, \tl{Y})(t) \equiv \left( 0, \frac{2}{d+2} \right)$ is a hyperbolic stationary solution with linearization given by%did i use the term "hyperbolic" correctly?
\begin{equation*}
\frac{d}{dt} u = 
\left( \begin{array}{cc}
2 & 0 \\
0 & 2\\
\end{array} \right)
u.
\end{equation*}

As $X, \tl{Y}$ remain positive for all $t$ such that the solution is defined, we can recover a solution of the ODE system ($\ref{nonlin1}$) by setting
\begin{equation*}
\begin{aligned}
X(t) &= X(t), & Y(t) &= \sqrt{ X(t) \tl{Y}(t)},\\
Z(t) &= (d+2) \tl{Y}(t) -1, & W(t) &= \frac{2}{d+2} X(t).
\end{aligned}
\end{equation*}
This solution $(X,Y,Z,W)(t)$ satisfies $Y, W >0$, $\lim_{t \rightarrow -\infty} (X,Y,Z,W)(t) = (0,0,1,0)$, and $\lim_{t \rightarrow -\infty} \frac{W}{Y^2} =1$.
Moreover, the condition that $$\tl{Y}(t_0) - \frac{2}{d+2} < - \frac{A_2}{(d+2)^2} X(t_0)$$ implies that the constant $\cC$ in the first integral equation is positive.
Indeed, in terms of $X, \tl{Y}$, the first integral equation is given by %maybe remind what the first integral equation is
\begin{equation} \label{integralEqnK}
-\cC g^2 X = A_2 X + (d+2)^2 \tl{Y} - 2(d+2).
\end{equation}
$\tl{Y}(t_0) - \frac{2}{d+2} < - \frac{A_2}{(d+2)^2} X(t_0)$ thus implies that the right hand side of the integral equation is negative, and so $\cC$ is positive.
Therefore, by theorem $\ref{recoverThm}$, $(X,Y,Z,W)(t)$ yields a smooth complete soliton $(G(f,g),h_s)$ on $E$.

To confirm that this metric is indeed K\"ahler, we show that $W = \frac{2}{d+2}X$ implies $\frac{d+2}{2} f = g_s g$.
Indeed, $\frac{W}{Y} = \frac{f}{g} + \frac{C}{g}$ as functions of $t$ since both solve the scalar ODE
$$\frac{du}{dt} = -uX + \frac{ZW}{Y}.$$
Since proposition \ref{lestimates} implies that $\frac{W}{Y}$ and $\frac{f}{g}$ both limit to $0$ and $g$ limits to a positive constant as $t \rightarrow -\infty$, it follows that $C = 0$ and so $\frac{W}{Y} = \frac{f}{g}$.
Additionally, $\frac{X}{Y} = \frac{dg}{ds}$.
Thus, $W = \frac{2}{d+2}X$ implies that $\frac{d+2}{2} f = g_s g$ and hence the soliton metric is indeed K\"ahler.

In summary, we have shown that, from solutions $(X, \tl{Y})(t)$ of (\ref{changeVarKahler}) with initial values in a suitable open subset of the plane, we obtain K\"ahler-Ricci solitons $(G(f,g),h_s)$ on $E$.
To show that all such Ricci soliton metrics on $E$ in the one-parameter family are K\"ahler, it suffices by the uniqueness theorem \ref{uniqueness} to show that, given $\cC_0, \lambda_0 >0$, we can choose appropriate initial values $(X, \tl{Y})(t_0)$ such that the corresponding soliton $(G(f, g), h_s)$ has $g(s=0) = \lambda_0$ and
$$-\cC_0 g(t)^2 X(t) = A_2 X(t) + (d+2)^2 \tl{Y}(t) - 2(d+2).$$
Since $g(s)$ is only determined up to a multiplicative constant $\lambda$, we can always ensure $g(s=0) = \lambda_0$ by taking $\lambda = \lambda_0$.
Dividing the first integral equation (\ref{integralEqnK}) by $X$ and taking $t \rightarrow -\infty$ (or equivalently applying proposition \ref{constants}) implies that
$$-\cC \lambda^2 = (d +2)^2  \lim_{t \rightarrow -\infty} \frac{\tl{Y}-\frac{2}{d+2}}{X}  +A_2.$$
Thus, it suffices to show that we can choose initial conditions $(X, \tl{Y})(t_0)$ in the unstable manifold of $(0, \frac{2}{d+2})$ such that $X(t_0) > 0$ and 
$$-\cC_0 \lambda_0^2 = (d +2)^2  \lim_{t \rightarrow -\infty} \frac{\tl{Y}-\frac{2}{d+2}}{X}  +A_2.$$
Note that we expect that it is possible to choose such initial conditions since the linearization of the system (\ref{changeVarKahler}) at $(0, \frac{2}{d+2})$ is given by 
\begin{equation*}
\frac{d}{dt} u = 
\left( \begin{array}{cc}
2 & 0 \\
0 & 2\\
\end{array} \right)
u.
\end{equation*}
Indeed, to make this argument rigorous, we consider the nonlinear system (\ref{changeVarKahler}) as a perturbation of its linearization at $\left(0, \frac{2}{d+2} \right)$and apply the following theorem of Hallam and Heidel ~\cite{HH70}:

\begin{thm} \label{HH}
(Hallam-Heidel ~\cite{HH70}, theorem 2)
Consider the linear systems of differential equations
\begin{equation} \label{linHH}
\frac{du}{dt} = A(t) u
\end{equation}
with fundamental matrix $U(t)$ such that $U(t_0)$ is the identity and
\begin{equation} \label{nonlinHH}
\frac{dv}{dt} = A(t)v + f(t,v).
\end{equation}
Let $w(t,r) : [0, \infty) \times [0, \infty) \rightarrow [0, \infty)$ be continuous on its domain and nondecreasing in $r$ for $r>0$ and fixed $t \ge 0$.
Let $\Delta(t)$ be a nonsingular continuous matrix satisfying $$\| \Delta(t) U(t) \| \le \alpha(t)$$
where $\alpha(t)$ is a continuous positive function for $t \ge t_0 \ge 0$.
Assume that $f(t,x)$ satisfies
$$\| U^{-1}(t) f(t,x) \| \le w \left( t, \frac{ \| \Delta(t)v \|}{\alpha(t)} \right)$$
and that the scalar ODE $\frac{dr}{dt} = w(t,r)$ has a positive solution which is bounded on the interval $t \ge t_0$.
Then given any solution $u(t) = U(t) c$ of (\ref{linHH}) with $|c|$ sufficiently small, there exists a solution $v(t)$ of (\ref{nonlinHH}) such that 
$$\lim_{t \rightarrow \infty} \frac{\| \Delta(t) (v(t) -u(t)) \|}{\alpha(t)} = 0.$$
Here, $\| \cdot \|$ denotes any of the equivalent norms on these finite dimensional vector spaces.
\end{thm}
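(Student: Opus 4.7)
The plan is to prove this via variation of parameters combined with a fixed-point argument, following the classical template for establishing asymptotic equivalence between linear and perturbed systems on a half-line. The main obstacle will be choosing the right weighted Banach space and verifying that the comparison ODE $\frac{dr}{dt}=w(t,r)$ genuinely controls the nonlinear tail.

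First, I would write the prospective solution in integral form. Given $u(t)=U(t)c$, any solution of the nonlinear system that agrees with $u$ asymptotically in the weighted sense should satisfy
\begin{equation*}
v(t) = u(t) - \int_t^\infty U(t)U^{-1}(s)\,f(s,v(s))\,ds
\end{equation*}
provided the tail converges. Call this operator $T$; fixed points of $T$ solve \eqref{nonlinHH} by differentiation, and the shape of the equation forces $\|\Delta(t)(v(t)-u(t))\|/\alpha(t) \to 0$ as $t\to\infty$ automatically, since the tail integral vanishes as $t\to\infty$ once the integrand is integrable. So the entire problem reduces to producing a fixed point of $T$ in an appropriate space where this integrability is built in.

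Second, I would choose as underlying space the weighted Banach space $\mathbb{V}_\Delta$ of continuous $v:[t_0,\infty)\to\mathbb{R}^n$ with norm $\|v\|_\Delta\doteqdot\sup_{t\ge t_0}\|\Delta(t)(v(t)-u(t))\|/\alpha(t)$, and work in the closed ball $\overline{B}_\rho(u)$ where $\rho$ is the supremum of the positive bounded solution $\tilde{r}(t)$ of the scalar comparison equation $\frac{dr}{dt}=w(t,r)$. For $v\in\overline{B}_\rho(u)$, the triangle inequality gives $\|\Delta(t)v\|/\alpha(t)\le\rho+|c|$, so the hypothesis $\|U^{-1}(s)f(s,v(s))\|\le w\bigl(s,\|\Delta(s)v\|/\alpha(s)\bigr)$ combined with monotonicity of $w$ in $r$ yields the pointwise control $\|U^{-1}(s)f(s,v(s))\|\le w(s,\rho+|c|)$. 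Using $\|\Delta(t)U(t)\|\le\alpha(t)$, one gets
\begin{equation*}
\frac{\|\Delta(t)(Tv(t)-u(t))\|}{\alpha(t)} \le \int_t^\infty w\bigl(s,\rho+|c|\bigr)\,ds,
\end{equation*}
and comparison with $\tilde{r}$ (applied on $[t,\infty)$) lets one absorb the right-hand side back into $\rho$ when $|c|$ is small enough. This gives the self-mapping $T(\overline{B}_\rho(u))\subset\overline{B}_\rho(u)$.

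Third, I would obtain the fixed point itself by either Schauder's theorem (the route used earlier for Theorem \ref{mainthmasymp} in this paper) or, if $w$ admits a Lipschitz-type refinement, by the Banach contraction principle on a small ball after shrinking $|c|$ further. For Schauder one needs precompactness of $T(\overline{B}_\rho(u))$ in $\mathbb{V}_\Delta$, which follows from the tail-decay estimate above (giving uniform smallness past some $t_1$) together with an Arzel\`a--Ascoli argument on $[t_0,t_1]$, precisely as in the proof of Theorem \ref{mainthmasymp}. Continuity of $T$ follows from dominated convergence, using the same comparison ODE as majorant.

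The hard part is the interplay between the comparison function $w$, the weight $\Delta$, and the fundamental matrix $U$: one must verify that the integral $\int_t^\infty w(s,\cdot)\,ds$ actually decays in $t$ and not merely stays bounded, because boundedness alone would give a fixed point without the $\lim=0$ conclusion. This decay is the content of assuming a \emph{bounded} positive solution of $\frac{dr}{dt}=w(t,r)$: such a solution must have $r'(t)=w(t,r(t))\to 0$ in an integrated sense, forcing $\int_t^\infty w(s,r(s))\,ds\to 0$ as $t\to\infty$. Carefully propagating this fact through the norm estimate is the essential step that upgrades mere existence to the asymptotic matching asserted by the theorem.
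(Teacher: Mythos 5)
First, a point of reference: the paper does not prove this statement at all --- it is quoted verbatim as Theorem 2 of Hallam--Heidel \cite{HH70} and used as a black box in the proof of Theorem \ref{mainKahlerthm}. So there is no in-paper argument to compare against; your outline can only be judged against the standard proof of such asymptotic-integration theorems, which is indeed the variation-of-parameters plus fixed-point template you describe. Your overall architecture (the integral operator $Tv(t)=u(t)-\int_t^\infty U(t)U^{-1}(s)f(s,v(s))\,ds$, a weighted sup-norm space, Schauder or contraction, and the observation that the asymptotic matching comes from decay of the tail integral) is the right one and mirrors both the literature and the way the paper itself handles the analogous operator $T_u$ in Section 3.

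There is, however, one concrete gap, and it sits exactly at the step you flag as ``the hard part.'' You take the ball radius $\rho=\sup_t\tilde r(t)$, where $\tilde r$ is the bounded positive solution of $r'=w(t,r)$, and then bound $\|U^{-1}(s)f(s,v(s))\|\le w(s,\rho+|c|)$ by monotonicity of $w$ in $r$. But since $\tilde r$ is nondecreasing with limit $L=\sup\tilde r$, you have $\rho+|c|>\tilde r(s)$ for every $s$, so monotonicity gives $w(s,\rho+|c|)\ge w(s,\tilde r(s))=\tilde r'(s)$ --- the inequality points the wrong way, and neither finiteness nor decay of $\int_t^\infty w(s,\rho+|c|)\,ds$ follows from the existence of $\tilde r$. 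Your closing remark that a bounded solution forces $\int_t^\infty w(s,r(s))\,ds\to0$ is correct for the argument $r=\tilde r(s)$ (the tail is $L-\tilde r(t)$), but that is not the integrand your estimate produces. The standard repair is to restrict to $[t_1,\infty)$ for $t_1$ large, to bound $\|\Delta(t)v(t)\|/\alpha(t)$ itself (not $v-u$) by $\tilde r(t)$, and to take $|c|$ small enough that $\sup_t\|\Delta(t)u(t)\|/\alpha(t)+\bigl(L-\tilde r(t_1)\bigr)\le\tilde r(t_1)$; then for $v$ in this set one has $w\bigl(s,\|\Delta(s)v(s)\|/\alpha(s)\bigr)\le w(s,\tilde r(s))=\tilde r'(s)$, whose tail integral is $L-\tilde r(t)\to0$, which simultaneously yields the self-mapping property and the asserted limit. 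With that substitution the rest of your outline (Schauder via Arzel\`a--Ascoli on $[t_1,t_2]$ plus uniform tail smallness, or a contraction under a Lipschitz refinement of $w$) goes through.
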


In this case, denote by $v(t) = \left( X(-t), \tl{Y}(-t)-\frac{2}{d+2} \right)$ solutions of
\begin{equation} \label{nonlin2HH}
\frac{d}{dt} v = 
\left( \begin{array}{cc}
-2 & 0 \\
0 & -2\\
\end{array} \right)
v + f(v),
\end{equation}
as a perturbation of its linearization
\begin{equation} \label{lin2HH}
\frac{d}{dt} u = 
\left( \begin{array}{cc}
-2 & 0 \\
0 & -2\\
\end{array} \right)
u
\end{equation}
which has fundamental solution matrix $U(t) = e^{-2t} Id$.
Here, $f :\mathbb{R}^2 \rightarrow \mathbb{R}^2$ has components which are polynomials of degree three with no constant or linear terms and is explicitly given by
\begin{equation*}
f(v) = 
\left( \begin{array}{c}
f_1(v_1, v_2) \\
f_2(v_1, v_2) \\
\end{array} \right) =
\left( \begin{array}{c}
-d v_1^3 - (d+2)^2 v_1 v_2^2 - 3(d+2)v_1 v_2 + 2 v_1^2\\
- d v_1^2 v_2 - (d+2)^2 v_2^3 - 3(d+2) v_2^2 -\frac{2d}{d+2} v_1^2 \\
\end{array} \right) 
\end{equation*}
Set $\Delta(t) = Id$ and $w(t, r) = D_1 e^{-2t}r^2$ where $D_1$ is a positive constant to be determined later, and notice that $\frac{dr}{dt} = w(t,r)$ has positive solutions which are bounded for $t \ge 0$.
Finally, set $\alpha(t) = \| Id \| e^{-2t}$ so that
$$\| \Delta(t) U(t) \| = \| U(t) \| \le \alpha(t) \qquad \text{ for all } t \in \mathbb{R}.$$
Because the components of $f$ consist of polynomials of degree three with no constant or linear terms, it follows that
$$\| U(t)^{-1} f(v) \| = e^{2t} |f(v)| \le D_2 e^{2t} \|v\|^2 = \frac{D_2 \| Id \|^2}{D_1} w \left( t, \frac{\|\Delta(t) v\|}{\alpha(t)} \right)$$
where $D_2 > 0$ is a constant depending on the choice of norms $\| \cdot \|$ and the polynomial entries of $f$.
Now set $D_1 = D_2 \| Id \|^2$ so that theorem \ref{HH} applies.

Let $c = (c_1, c_2)$ be a point in $\mathbb{R}^2$ such that $c_1 > 0$ and 
$$\frac{c_2}{c_1} = - \frac{C_0 \lambda_0^2 + A_2}{(d+2)^2} $$
By rescaling $c$, assume without loss of generality that $\| c \|$ is sufficiently small for the conclusion of theorem \ref{HH} to apply.
It then follows that there exists a solution $v(t)$ of (\ref{nonlin2HH}) such that
$$\lim_{t \rightarrow \infty} \frac{|\Delta(t) (x(t) -y(t))|}{\alpha(t)} = \lim_{t \rightarrow \infty} \frac{|v(t) - e^{-2t}c|}{e^{-2t}} = 0.$$
Recovering $X(t), \tl{Y}(t)$ from $v(t) = \left( X(-t), \tl{Y}(-t)-\frac{2}{d+2} \right)$, it follows that
\begin{equation*}
\begin{aligned}
\lim_{t \rightarrow -\infty} \frac{\tl{Y}(t) - \frac{2}{d +2}}{X(t)} &= \lim_{t \rightarrow \infty} \frac{v_2(t)}{v_1(t)}\\
&= \lim_{t \rightarrow \infty} \frac{v_2(t) - e^{-2t} c_2 + e^{-2t} c_2}{v_1(t) - e^{-2t}c_1 + e^{-2t}c_1}\\
&= \lim_{t \rightarrow \infty} \frac{ \left( \frac{v_2(t) - e^{-2t} c_2}{e^{-2t}} \right)+ c_2}{\left( \frac{v_1(t) - e^{-2t} c_1}{e^{-2t}}  \right)+ c_1}\\
&= \lim_{t \rightarrow \infty} \frac{c_2}{c_1}\\
&= \frac{c_2}{c_1}\\
&= - \frac{\cC_0 \lambda_0^2 + A_2}{(d+2)^2} 
\end{aligned}
\end{equation*}
Hence,
$$-\cC_0 \lambda_0^2 = (d +2)^2  \lim_{t \rightarrow -\infty} \frac{\tl{Y}-\frac{2}{d+2}}{X}  +A_2.$$
To complete the proof, it remains to check that $\lim_{t \rightarrow -\infty} (X, \tl{Y})(t) = \left( 0, \frac{2}{d+2} \right)$ and $X > 0$.
The former fact is clear since 
\begin{equation*}
\lim_{t \rightarrow \infty} \frac{|v(t) - e^{-2t}c|}{e^{-2t}} = 0 \implies \lim_{t \rightarrow \infty} v = 0 \implies \lim_{t \rightarrow -\infty} (X, \tl{Y})(t) = \left( 0, \frac{2}{d +2} \right).
\end{equation*}
Moreover,
$$\lim_{t \rightarrow \infty} \frac{|v(t) - e^{-2t}c|}{e^{-2t}} = 0 \implies \lim_{t \rightarrow \infty} e^{2t}X(-t) = c_1 >0.$$
Because the sign of $X$ is preserved, it must then be the case that $X>0$.
Thus, the smooth complete soliton $(G(f, g), h_s)$ recovered from $(X, \tl{Y})(t)$ has $g(s=0) = \lambda_0$ and constant $\cC = \cC_0$ in the first integral equation.
As $\cC_0, \lambda_0 > 0$ were given arbitrarily, the uniqueness theorem \ref{uniqueness} implies that every Ricci soliton $(G(f,g), h_s)$ in the one-parameter family constructed in theorem \ref{mainthm} can be obtained from a solution $(X, \tl{Y})(t)$ of (\ref{changeVarKahler}) in this way.
Hence, every such Ricci soliton $(G(f,g),h_s)$ on $E$ with $c_1(E) = -c_1(B)$ is in fact K\"ahler.
\end{proof}

%%%%%%%%%%%%%%%%%%%%%%%%%%%%%%%%%%%%%%%%%%%%%%%%%%%%%%%%%%%%%%%%%%%%%%
\section{Appendix: The Complex Structure}
%%%%%%%%%%%%%%%%%%%%%%%%%%%%%%%%%%%%%%%%%%%%%%%%%%%%%%%%%%%%%%%%%%%%%%
Recall the setting:
$(B^{d}, \check{g}, \check{J}, \check{\omega})$ is a K\"ahler-Einstein manifold with positive Einstein constant scaled such that $\check{Rc} = (d+2) \check{g}$.
Fix a suitable $q \in \mathbb{R}$ and let $P$ denote the principal $U(1)$-bundle with Euler class equal to $q c_1(B)$ in $H^2(B, \mathbb{R})$.
Given $a, b \in \mathbb{R}$, let $\hat{g}(a,b) = a^2 g_{U(1)} + b^2 \check{g}$ denote the unique %double check the uniqueness
metric on $P$ such that $$p: (P, \hat{g}(a,b)) \rightarrow (B, b^2\check{g})$$ is a Riemannian submersion with homogeneous totally geodesic fibers of length $2 \pi a$ and whose horizontal distribution $(\ker p_*)^{\perp}$ equals that of the principal $U(1)$-connection on $P$ with curvature $q(d+2) \check{\omega} \in 2 \pi c_1(E)$.

Throughout this section $\{ \hat{U} \} \cup \{ X_i \}_{i = 1}^{d}$ will denote a local orthonormal frame on $(P, \hat{g}(1,1))$ %might have to check that circle fibers are length 2 pi
such that $-\hat{U}$ generates the family of diffeomorphisms given by the $U(1)$ action and the $X_i$ are basic vector fields for the Riemannian submersion $(P, \hat{g}(1,1)) \rightarrow (B, \check{g})$ such that $\{ \check{J} p_* X_i \}_{i = 1}^{d} = \{p_* X_i \}_{i = 1}^{d}$.

From $P$, form the associated complex line bundle $\tl{p} : E \rightarrow B$ for usual $U(1)$ representation on $\mathbb{C}$. %is this right?
$E$ is given topologically by $[0, \infty) \times P$ quotiented by the equivalence relation that collapses the circle fibers to points in $\{ 0 \} \times P$. %need to rephrase this
In particular, the complement of the zero section $E \setminus B_0$ is diffeomorphic to $(0, \infty) \times P$.
We consider smooth $U(1)$-invariant smooth metrics on $E \setminus B_0 \cong (0, \infty) \times P$ of the form
$$G(f,g) = ds^2 + \hat{g}(f(s), g(s)) =ds^2 + f(s)^2 g_{U(1)} + g(s)^2 \check{g}$$
where $s$ is the coordinate on $(0, \infty)$.
Assume additionally that $f, g$ have suitable asymptotics as $s \rightarrow 0$ so that $G(f,g)$ extends to a smooth complete metric on $E$.
We shall abuse notation and also refer to $\hat{U}$ and $X_i$ as (possibly locally defined) vector fields on $E \setminus B_0$ via the identification $T(E \setminus B_0) \cong T((0, \infty) \times P) \cong \mathbb{R} \times TP$.

%K\"ahler computations
We now provide the details of the construction of the complex structure $J$ on the total space $E$.
The metric $G(f,g)$ determines a $G(f,g)$-orthogonal decomposition 
$$T_x E \cong \ker \tl{p}_*|_x \oplus (\ker \tl{p}_*|_x)^\perp $$
which for $x \in E \setminus B_0$ gives an isomorphism
$$T_x E \cong  Span_{\mathbb{R}} ( \partial_s, \hat{U}) \oplus T_{\tl{p}(x)} B .$$
The compatible complex structure is defined for $x \in E \setminus B_0$ by
$$J_x = (J_f)_x \oplus \check{J}_{\tl{p}(x)}$$
where $\check{J}$ denotes the complex structure on the base $B$ and $J_f$ denotes the endomorphism of $Span_{\mathbb{R}} ( \partial_s, \hat{U})$ defined by
$J_f \hat{U} = f(s) \partial_s$ and $J_f \partial_s = -\frac{1}{f(s)} \hat{U}$.
In particular, $J_f$ depends only on the coordinate $s \in (0, \infty)$.
It is straightforward to verify that $J$ defines a compatible almost complex structure on $(E \setminus B_0, G(f,g))$.

\begin{prop}
$J_f$ smoothly extends across the zero section $B_0 \subset E$ so that $J =  J_f \oplus \check{J}$ defines a compatible almost complex structure on $(E, G(f,g))$.
\end{prop}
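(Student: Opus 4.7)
The plan is to work in a local trivialization of the line bundle $E|_U \cong U \times \mathbb{C}$ over a neighborhood $U \subset B$ of a base point on $B_0$. The $G(f,g)$-orthogonal decomposition $TE = V \oplus H$ into vertical ($V = \ker \tl{p}_*$) and horizontal parts is a smooth sub-bundle decomposition across $B_0$, and the horizontal block of $J$ is just $\check{J}$ pulled back via the smooth bundle isomorphism $\tl{p}_*\colon H \to \tl{p}^*TB$, which extends smoothly over $B_0$. So the problem reduces to showing that $J_f$, viewed as an endomorphism of $T\mathbb{C}$ depending only on the fiber coordinate, extends smoothly across the origin of each fiber.

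I would next compute $J_f$ in Cartesian coordinates $(x,y)$ on $\mathbb{C}$ with $x+iy = se^{i\theta}$. Using $\partial_x = \cos\theta\,\partial_s - \tfrac{\sin\theta}{s}\partial_\theta$, $\partial_y = \sin\theta\,\partial_s + \tfrac{\cos\theta}{s}\partial_\theta$, $\hat{U} = -\partial_\theta$ and the defining relations $J_f\hat{U} = f\partial_s$, $J_f\partial_s = -\tfrac{1}{f}\hat{U}$, a direct manipulation puts the matrix of $J_f$ in the frame $\{\partial_x, \partial_y\}$ in the form
\begin{equation*}
\begin{pmatrix}
xy\,\nu(s^2) & -\bigl(\mu(s^2) + x^2\,\nu(s^2)\bigr) \\
\mu(s^2) + y^2\,\nu(s^2) & -xy\,\nu(s^2)
\end{pmatrix},
\end{equation*}
where $\mu(s^2) \doteqdot s/f(s)$, $\nu(s^2) \doteqdot (f(s)^2 - s^2)/(s^3 f(s))$, and $s^2 = x^2+y^2$.

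The key algebraic observation is that the smoothness hypotheses on $f$, namely that $f$ extends to a smooth odd function with $f_s(0) = 1$, force $f(s) = s\,\phi(s^2)$ for a smooth function $\phi$ with $\phi(0) = 1$. Consequently $\mu = 1/\phi$ is a smooth function of $s^2 = x^2 + y^2$, and writing $\phi(u)^2 - 1 = u\,\psi(u)$ for some smooth $\psi$ yields $\nu(s^2) = \psi(s^2)/\phi(s^2)$, which is likewise smooth in $x^2+y^2$. Hence all four matrix entries of $J_f$ are smooth functions of $(x,y)$ on the entire fiber $\mathbb{C}$, and at the origin $J_f$ specializes to the standard complex structure on $\mathbb{C}$ (since $\mu(0)=1$ and the $\nu$-terms vanish). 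The identities $J^2 = -\mathrm{id}$ and $G(f,g)(J\cdot, J\cdot) = G(f,g)(\cdot,\cdot)$, which hold on $E \setminus B_0$ by construction, then extend to $B_0$ by continuity.

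The main obstacle is the apparent $1/s$ singularities in the raw expressions for $J_f$ at the fiber origin; these cancel smoothly precisely because of the parity of $f$ and the normalization $f_s(0)=1$, which is exactly what the $\mu, \nu$ representation exploits. Everything else is routine bookkeeping on the block-diagonal form of $J$ relative to the vertical-horizontal splitting.
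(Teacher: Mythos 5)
Your proof is correct, and at its core it follows the same strategy as the paper: pass to a local trivialization $E|_U \cong \mathbb{C} \times U$, note that the horizontal block of $J$ is just $\check{J}$ and causes no trouble, and reduce everything to showing that $J_f$ extends smoothly across the origin of the fiber $\mathbb{C}$. Where you genuinely differ is in how that last step is carried out. The paper stays in polar form, writes $J_f \partial_s = \frac{s}{f}J_{std}\partial_s$ and $J_f \partial_\theta = \frac{f}{s}J_{std}\partial_\theta$, argues that $f(s)/s$ and $s/f(s)$ are smooth functions of $s$ on $[0,\infty)$ because $f$ is smooth with $f(0)=0$, and then separately checks that the extensions agree on overlaps since the transition functions are $U(1)$-valued. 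You instead compute the full matrix of $J_f$ in Cartesian coordinates and show each entry is a smooth function of $(x,y)$ by exploiting the parity of $f$: the representation $f(s) = s\,\phi(s^2)$ with $\phi$ smooth and $\phi(0)=1$ is exactly what makes your $\mu$ and $\nu$ smooth functions of $x^2+y^2$. This is the more robust route. Smoothness of $f(s)/s$ as a function of $s \ge 0$ is not by itself enough to conclude smoothness of the endomorphism field at the fiber origin: for instance $f(s) = s + s^2$ has $f(0)=0$, $f_s(0)=1$, and $f(s)/s$ smooth in $s$, yet the corresponding $\nu$ behaves like $2/s$ and the resulting $J_f$ fails to be $C^1$ at the origin. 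It is precisely the oddness of $f$, which is part of the smooth-closing conditions (\ref{leftasymps}), that cancels the apparent $1/s$ singularities, and your $\mu,\nu$ bookkeeping makes explicit what the paper leaves implicit. Your framing also makes the paper's overlap check superfluous, since you extend the globally defined tensor $J_f$ on $E \setminus B_0$ by continuity and verify smoothness locally in a single chart.
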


\begin{proof}
The vector fields $\hat{U}$ and $\partial_s$ are defined on all of $E \setminus B_0$.
By the construction of $E$, a local trivialization on an open subset $U \subset B$ of the principal $U(1)$-bundle $P$ yields a trivialization $\psi : E|_U \rightarrow \mathbb{C} \times U $ such that $\psi_* \hat{U} = -\partial_{\theta}$ and $\psi_* \partial_s = \partial_s$ where $(s, \theta)$ denote polar coordinates of the $\mathbb{C}$ factor in the trivialization.
Since $J_f$ does not depend on $B$, it suffices work in the $\mathbb{C}$ factor only.
In these coordinates, $J_f$ takes the form
\begin{equation*}
\begin{aligned}
J_f \partial_{s} &= \frac{1}{f(s)} \partial_{\theta} = \frac{s}{f(s)} J_{std} \partial_s \\
J_f \partial_{\theta} &= -f(s) \partial_s = \frac{f(s)}{s} J_{std} \partial_{\theta}
\end{aligned}
\end{equation*}
where $J_{std}$ denotes the standard complex structure on $\mathbb{C}$.
Because $G(f,g)$ extends smoothly across the zero section, $\lim_{s \rightarrow 0} \frac{f(s)}{s}=1$.
Thus, $J_f$ extends continuously to $J_{std}$ on $T_0 \mathbb{C}$.
In fact, since $f$ is smooth, this extension is smooth.
Indeed, $f$ smooth and $f(0)=0$ implies that $\frac{f(s)}{s}$ is smooth on $[0, \infty)$.
Since $\lim_{s \rightarrow 0} \frac{f(s)}{s}=1$ and $f(s) > 0$ for $s>0$, it also follows that $\frac{s}{f(s)}$ is smooth on $[0, \infty)$ and so the smoothness of the extension of $J_f$ follows.

It remains to confirm that these extensions agree on overlaps of the trivializations.
Different trivializations of $P|_{U \cap V} $ with transition function $\Psi : U \cap V \rightarrow U(1)$ induce trivializations of $E|_{U \cap V}$ with the same transition function $\Psi : U \cap V \rightarrow U(1) \subset Aut(\mathbb{C})$.
Since any element of $U(1)$ preserves $J_{std}$ (as well as $J_f$), it follows that $J_{std} \in End( T_0 \mathbb{C} )$ is identified with itself under any $\Psi(x) \in U(1)$, $x \in U \cap V$.
Hence, the extensions agree on overlaps and so $J =J_f \oplus \check{J}$ extends across the zero section $B_0 \subset E$ to a compatible almost complex structure on $(E, G(f,g))$.
\end{proof}

\begin{prop}
J is an integrable almost complex structure on $E$.
\end{prop}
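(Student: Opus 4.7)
The plan is to verify integrability by showing the Nijenhuis tensor
\begin{equation*}
N_J(V, W) = [V, W] + J[JV, W] + J[V, JW] - [JV, JW]
\end{equation*}
vanishes identically. Since $J$ was shown in the previous proposition to be smooth on all of $E$ and $N_J$ is tensorial, it suffices to verify $N_J = 0$ on the open dense subset $E \setminus B_0$; continuity then propagates vanishing across the zero section. On $E \setminus B_0 \cong (0,\infty) \times P$, I will use the local frame $\{\partial_s, \hat{U}, X_1, \ldots, X_d\}$ from the setup, where the horizontal basic lifts $X_i$ are $U(1)$-invariant and paired so that $\check{J} p_* X_{2k-1} = p_* X_{2k}$.

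For the ``vertical" pair $(\partial_s, \hat{U})$, the restriction of $J$ to the $2$-plane $\mathrm{Span}(\partial_s, \hat{U})$ is an almost complex structure on a $2$-dimensional bundle and the brackets are trivial since $f = f(s)$ and $[\partial_s, \hat{U}] = 0$. For mixed pairs $(\partial_s, X_i)$ and $(\hat{U}, X_i)$, the four Lie brackets appearing in $N_J$ all vanish: $[\partial_s, X_i] = 0$ and $[\partial_s, JX_i] = 0$ because $X_i$ and its horizontal partner $JX_i$ are independent of $s$; $[\hat{U}, X_i] = 0$ and $[\hat{U}, JX_i] = 0$ by $U(1)$-invariance of the horizontal lift; and $J\partial_s = -f^{-1}\hat{U}$, $J\hat{U} = f\partial_s$ produce only scalar multiples of these (with Leibniz corrections vanishing since $X_i(f) = 0$). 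Hence $N_J = 0$ on these pairs.

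The main case is $N_J(X_i, X_j)$ for two horizontal frame fields. Here the Lie bracket splits as
\begin{equation*}
[X_i, X_j] = [\check{X}_i, \check{X}_j]^H - q(d+2)\, \check{\omega}(\check{X}_i, \check{X}_j) \hat{U},
\end{equation*}
where $[\check{X}_i, \check{X}_j]^H$ denotes the horizontal lift and the vertical term comes from the fact that the principal connection on $P$ has curvature $q(d+2)\check{\omega}$. Substituting into the Nijenhuis formula and projecting onto the horizontal distribution gives exactly the horizontal lift of $N_{\check{J}}(\check{X}_i, \check{X}_j)$, which vanishes because $\check{J}$ is integrable on the K\"ahler base $B$. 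Projecting onto the vertical $\hat{U}$-direction produces a multiple of
\begin{equation*}
\check{\omega}(\check{X}_i, \check{X}_j) - \check{\omega}(\check{J}\check{X}_i, \check{J}\check{X}_j),
\end{equation*}
and projecting onto the $\partial_s$-direction (via $J\hat{U} = f\partial_s$) produces a multiple of
\begin{equation*}
\check{\omega}(\check{J}\check{X}_i, \check{X}_j) + \check{\omega}(\check{X}_i, \check{J}\check{X}_j).
\end{equation*}
Both expressions vanish identically because $\check{\omega}$ is a $(1,1)$-form on $(B,\check{J})$, i.e., $\check{J}$-invariant: $\check{\omega}(\check{J}V,\check{J}W) = \check{\omega}(V,W)$.

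The main obstacle will be careful bookkeeping: verifying that the horizontal lift commutes with $J$ in the sense $JX_i = (\check{J}p_*X_i)^H$ and with brackets in the sense $[X_i, X_j]^H = [\check{X}_i, \check{X}_j]^H$, fixing the sign convention relating $\omega([X_i, X_j])$ to $d\omega(X_i, X_j)$, and tracking the scalar factors of $f(s)$ that appear when $J$ is applied to vertical components. Once these are settled, integrability of $J$ reduces cleanly to integrability of $\check{J}$ together with the $(1,1)$-type of the curvature form, both of which are immediate from the K\"ahler hypothesis on $B$.
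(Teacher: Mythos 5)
Your proof is correct, and it follows the same overall strategy as the paper (show the Nijenhuis tensor vanishes), but it is genuinely more complete. The paper's proof only computes $N_{J_f}(\partial_s,\partial_\theta)=0$ in a fiber trivialization and asserts that this suffices because $\check{J}$ is integrable and $J_f$ does not depend on the base; it never confronts the fact that the horizontal distribution is non-integrable when $q\neq 0$, so that $[X_i,X_j]$ acquires a vertical component proportional to $\check{\omega}(\check{X}_i,\check{X}_j)\hat{U}$. You isolate exactly this case and observe that the resulting $\hat{U}$- and $\partial_s$-components of $N_J(X_i,X_j)$ are multiples of $\check{\omega}(\check{X}_i,\check{X}_j)-\check{\omega}(\check{J}\check{X}_i,\check{J}\check{X}_j)$ and $\check{\omega}(\check{J}\check{X}_i,\check{X}_j)+\check{\omega}(\check{X}_i,\check{J}\check{X}_j)$, both of which vanish because $\check{\omega}$ is of type $(1,1)$. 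That is the substantive content missing from the paper's argument, and your treatment supplies it; the remaining bookkeeping you flag (horizontal lifts commuting with $\check{J}$ and with brackets modulo vertical terms, sign of the curvature identity) is routine. One small imprecision: in the $(\partial_s,\hat{U})$ case the brackets are not all trivial --- e.g.\ $[\partial_s, J\hat{U}]=[\partial_s, f\partial_s]=f_s\partial_s\neq 0$ --- but this does not matter, since $\hat{U}$ is proportional to $J\partial_s$ and $N_J(V,JV)=0$ is an algebraic identity for any almost complex structure, so that case vanishes for free.
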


\begin{proof}
We check that the Nijenhuis tensor vanishes.
Since $\check{J}$ is an integrable complex structure on the base $B$ and $J_{f}$ doesn't depend on points in the base $B$, it suffices to show
$$N_{J_f} (\partial_s, \partial_{\theta}) = 0.$$
Indeed, this fact can be verified by computation:
\begin{equation*}
\begin{aligned}
N_{J_f} (\partial_s, \partial_{\theta}) &= [\partial_s, \partial_\theta] + J_f ( [J_f \partial_s, \partial_\theta] + [\partial_s, J_f \partial_\theta]) - [J_f \partial_s , J_f \partial_\theta]\\
&= J_f \left( \left[ \frac{1}{f} \partial_\theta, \partial_\theta \right] + \left[ \partial_s, -f \partial_s \right] \right) - \left[ \frac{1}{f} \partial_\theta, -f \partial_s \right] \\
&= J_f ( \partial_s (-f \partial_s) ) - f \partial_s ( \frac{1}{f} \partial_\theta ) \\
&= -\frac{f_s}{f} \partial_\theta + \frac{f_s}{f} \partial_\theta\\
&= 0.
\end{aligned}
\end{equation*}
\end{proof}

One may then compute the components of $\nabla J$ from the Riemannian submersion structures and the form of the metric $G(f,g)$, namely 
\begin{equation*}
\begin{aligned}
\nabla_{\partial_s} J \partial_s &= 0\\
\nabla_{\hat{U}} J \partial_s &= 0\\
\nabla_{X_i} J \partial_s &= \left(- \frac{d+2}{2}q \frac{f}{g^2} - \frac{g_s}{g} \right) \check{J} X_i\\
\nabla_{\partial_s} J \hat{U} &=0\\
\nabla_{\hat{U}} J \hat{U} &=0\\
\nabla_{X_i} J \hat{U} &=\left(  \frac{f g_s}{g} + \frac{d+2}{2} q \frac{f^2}{g^2} \right) X_i\\
\nabla_{\partial_s} J X_i &= 0\\
\nabla_{\hat{U}} J X_i &= 0 \\
\nabla_{X_k} J X_i &= \left( - g_s g - \frac{d+2}{2} q f \right) \delta_{kj} \partial_s  + \left( -\frac{g_s g}{f} - \frac{d+2}{2} q \right) \delta_{ki} \hat{U}
\end{aligned}
\end{equation*}
where $1 \le j \le d$ is the index such that $\check{J} p_* X_i = p_* X_j$. %might make this more clear

\comment{
\newpage

%%%%%%%%%%%%%%%%%%%%%%%%%%%%%%%%%%%%%%%%%%%%%%%%%%%%%%%%%%%%%%
%CURVATURE COMPUTATIONS
%%%%%%%%%%%%%%%%%%%%%%%%%%%%%%%%%%%%%%%%%%%%%%%%%%%%%%%%%%%%%%
In this case, it is known that the curvature 2-forms on $(P, g(1,1))$ are given by $(d_2+2)q p^* \omega$ and
the O'Neill tensor $A'$ is given by 
$$A'_X Y = - \frac{(d_2+2)}{2} q \omega (p_*X, p_* Y) \hat{U} = - \frac{(d_2+2)}{2} q \check{g} (Jp_*X, p_* Y) \hat{U}$$
where $X,Y$ are horizontal tangent vectors to $P$.

The Levi-Civita connection $\nabla$ on $(P, g(a,b) = a^2 g_{2 \pi} + b^2 \hat{g})$ is given by %computed via Koszul formula and O'Neill Tensor
$$\nabla_{\hat{U}} \hat{U} = 0$$
$$\nabla_{\hat{U}} X_i = \mathcal{H} \nabla_{\hat{U}} X_i =  \nabla_{X_i} \hat{U} = \mathcal{H} \nabla_{X_i} \hat{U} = A'_{X_i} \hat{U}$$ %little unsure about this and the next line
Hence $$=\frac{d_2+2}{2} q \frac{a^2}{b^2} \check{g}(J X_i, X_j) X_j .$$
$$\nabla_{X_i} X_j = \mathcal{H} \nabla_{X_i} X_j - \frac{d_2+2}{2} q \check{g} (J X_i, X_j) \hat{U}$$

From $P$, form the associated complex line bundle $E$ for the usual representation of $S^1 = U(1)$. %is this right?
$E$ is given topologically by $[0, \infty) \times P$ quotiented by the equivalence relation that collapses the circle fibers to points in $\{ 0 \} \times P$. %need to rephrase this
We consider rotationally symmetric smooth complete metrics on $E$ of the form
$$ds^2 + \hat{g}(f(s), g(s)) = ds^2 + f(s)^2 g_{2 \pi} + g(s)^2 \check{g}$$
on $(0, \infty) \times P$ where $s$ parametrizes $(0, \infty)$.

We then have orthonormal frame for $(0,l) \times (P)$ given by
$$\left\{ \partial_t \right\} \cup \left\{ \frac{1}{g(s)} X_i \right\}_{i=1}^{n-1} \cup \left\{ \frac{1}{f(s)} \hat{U} \right\}$$

Let's compute some of the components of the Christoffel symbols using the Koszul formula for the metric $ds^2+ f(s)^2 g_{2 \pi} + g(s)^2 \check{g}$ %OH NO I FORGOT THE 2 IN THE KOSZUL FORMULA SO SOME OF THESE ARE OFF!!!
%need to fix other things too :(
%update: i think it's ok now
\begin{equation*}
\begin{aligned}
\nabla_{\partial_s} X_i &= \nabla_{X_i} \partial_s = \frac{g'}{g} X_i\\
\nabla_{\partial_s} \hat{U} &= \nabla_{\hat{U}} \partial_s =  \frac{f'}{f} \hat{U}\\
\nabla_{\partial_s} \partial_s&= 0\\
\end{aligned}
\end{equation*}

We compute the O'Neill tensor $T$ for the latter submersion using the Koszul formula for the metric $ds^2 + f(s)^2 g_{2 \pi} + g(s)^2 \check{g}$

\begin{equation*}
\begin{aligned}
T_{X_i} \partial_s &= \frac{g'}{g} X_i\\
T_{\hat{U}} \partial_s &= \frac{f'}{f} \hat{U}\\
T_{X_i} X_j &= -g' g \delta_{ij} \partial_s\\
T_{\hat{U}} \hat{U} &= - f' f \partial_s\\
T_{X_i} \hat{U} &= T_{\hat{U}} X_i= 0\\
\end{aligned}
\end{equation*}

It follows that we have that the mean curvature vector of the fibers is given by 
$$N  = T_{\frac{1}{2 \pi f(s)} \hat{U}} \frac{1}{2 \pi f(s)} \hat{U} +  \sum_{i=1}^{n-2} T_{\frac{1}{h(t)} X_i} \frac{1}{h(t)} X_i = \left(- \frac{f'}{f} - (d_2) \frac{g'}{g} \right) \partial_s $$
%i don't think that 2pi should be anywhere in the formula

Again, using the Koszul formula we obtain
\begin{equation*}
\begin{aligned}
\langle \nabla_{\partial_s} N , \partial_s \rangle &= - \frac{f''}{f} + \left( \frac{f'}{f} \right)^2 -d_2 \frac{g''}{g} + d_2 \left( \frac{g'}{g} \right)^2\\
\langle \nabla_{X_i} N , \partial_s \rangle &= 0\\
\langle \nabla_{\hat{U}} N, \partial_s \rangle &=0\\
\end{aligned}
\end{equation*}

Hence, we have the following component of the Ricci tensor
$$ Rc( \partial_s, \partial_s) = - \frac{f''}{f} - d_2 \frac{g''}{g}$$
and $Rc(\partial_s, \partial_s) + \nabla^2 u (\partial_s, \partial_s) = 0$ becomes
$$ - \frac{f''}{f} - d_2 \frac{g''}{g} + u'' = 0$$

To compute the Ricci tensor in the vertical direction we need the following computations
\begin{equation*}
\begin{aligned}
\tilde{\delta} T (X_i, X_j) &= (-g''g +  (g')^2) \delta_{ij}\\
\tilde{\delta} T (\hat{U}, \hat{U}) &= - f'' f +  ( f')^2\\
\tilde{\delta} T (X_i, \hat{U}) &= \tilde{\delta} T(\hat{U}, X_i) = 0\\
\langle N, T_{X_i} X_j \rangle &= \left( \frac{f' g' g}{f} + (d_2) (g')^2 \right) \delta_{ij}\\
\langle N , T_{\hat{U}} \hat{U} \rangle &=  (f')^2 +  (d_2) \frac{g' f' f}{g}\\
\langle N, T_{X_i} \hat{U} \rangle &= \langle N, T_{\hat{U}} X_i \rangle = 0\\
\end{aligned}
\end{equation*}

Series of computations yields...

\begin{equation*}
\begin{aligned}
Rc(\hat{U}, \hat{U}) &= -f'' f - d_2 \frac{g' f'}{g} f + d_2 \frac{(d_2+2)^2}{4} q^2 \frac{f^4}{g^4}\\
Rc(X_i, X_j) &= d_2+2 - \frac{(d_2+2)^2 f^2}{2 g^2} q^2 - \frac{f' g' g}{f} - (d_2-1)(g')^2 - g'' g
\end{aligned}
\end{equation*}

In the end the soliton equations become:

\begin{equation*}
\begin{aligned}
0 &= -\frac{f''}{f} - (n-2) \frac{f' g'}{f g} + (d_2) \frac{(d_2+2)^2}{4} q^2 \frac{f^2}{g^4} + \frac{f'}{f} h'\\ %Rc(U,U) + hessian h (U, U) = 0
0 &= \frac{d_2+2}{g^2} - \frac{(d_2+2)^2 f^2}{2 g^4} q^2 - \frac{f' g'}{f g} - (d_2-1) \left( \frac{g'}{g} \right)^2 - \frac{g''}{g} + \frac{g'}{g} h' %Rc(X, X ) + hessian u (X, X) =0
\end{aligned}
\end{equation*}

}

%%%%%%%%%%%%%%%%%%%%%%%%%%%%%%%%%%%%%%%%%%%%%%%%%%%%%%%%%%%%%%%%%%%%%%
%extra stuff
%%%%%%%%%%%%%%%%%%%%%%%%%%%%%%%%%%%%%%%%%%%%%%%%%%%%%%%%%%%%%%%%%%%%%%
%\begin{prop}
%Assume $(X,Y,Z,W)(t)$ is a solution of the nonlinear ODE system (\ref{nonlin1}) such that 
%$\lim_{t \searrow -\infty} (X,Y,Z,W)(t) = (0,0,1,0)$ and
%$\int_{-\infty}^a X(t) dt > -\infty$.%Check these assumptions are sufficient
%Then the first integral equation (\ref{integraleqn}) holds for appropriate choice of the constant $C$ depending on $\lambda$.

%If $|\int_{-\infty}^a X(t) dt| < \infty$ then 
%\end{prop} 

%\begin{proof}
%It follows from the ODE system (\ref{nonlin1}) that $\frac{d}{dt} (2X^2+2Y^2+Z^2-W^2 - 1 + C \cL^2) = 2 (2X^2+2Y^2+Z^2-W^2 - 1 + C \cL^2) (2X^2 + Z^2)$ for any constant $C \in \mathbb{R}$.
%Thus, if we choose $C$ so that $(2X^2+2Y^2+Z^2-W^2 - 1 + C \cL^2) = 0$ for some $t$, then $(2X^2+2Y^2+Z^2-W^2 - 1 + C \cL^2)$ is identically $0$.
%\end{proof}

%maybe indicate how C depends on \lambda
%this last prop is weird b/c it implies that lim (1-Z)/(Y^2) is finite as t goes to -infty since we can also express C as a limit with this term

%here's where I'm testing the bibliography
%Let us test this ~\cite{Ivey94}, ~\cite{Cao96}, ~\cite{Test1}

\bibliography{mybib}{}
\bibliographystyle{alpha}

\end{document}